\documentclass[12pt]{amsart}
\usepackage{amssymb,amsmath,amsthm,mathrsfs,MnSymbol}
\usepackage[dvipsnames]{xcolor}
\usepackage{esint}
\usepackage{graphicx}
\usepackage[linktocpage=true,backref=page]{hyperref}
\hypersetup{
	colorlinks=true, 	
	linktoc=all,     		
	linkcolor=RoyalPurple,  	
	citecolor=Emerald,
}
\calclayout
\newcommand\scalemath[2]{\scalebox{#1}{\mbox{\ensuremath{\displaystyle #2}}}}

\newtheorem{theorem}{Theorem}[section]
\numberwithin{theorem}{section} \numberwithin{equation}{section}
\newtheorem{corollary}[theorem]{Corollary}
\newtheorem{proposition}[theorem]{Proposition}
\newtheorem{definition}[theorem]{Definition}
\newtheorem{lemma}[theorem]{Lemma}

\theoremstyle{remark}
\newtheorem{remark}[theorem]{Remark}
\newtheorem{example}[theorem]{Example}

\newcommand{\pd}[2][]{\frac{\partial #1}{\partial #2}}
\title[Line congruences associated to Appell functions]{Line congruences associated to Appell's hypergeometric functions of rank-4}
\author{Matthew Ryan}
\address{Dept.\!~of Mathematics, Virginia Tech, Blacksburg, VA 24060}
\email{mattryan810@vt.edu}
\author{Michael T.  Schultz}
\address{Dept.\!~of Mathematics, Virginia Tech, Blacksburg, VA 24060}
\email{michaelschultz@vt.edu}
\begin{document}
\begin{abstract}
Line congruences are the genesis of important examples of transformations of projective surfaces, such as the Laplace transform. We survey and review results related to this historical subject, then derive original formulae for the Laplace transform of the entire rank-4 linear system associated to such an immersed projective surface. We apply our results to study the geometry of surfaces defined by Appell's hypergeometric functions of rank-4: namely, $F_2$ and $F_4$. We show that the sequence of Laplace invariants for each is determined respectively by the Euler-Poisson-Darboux equation for $F_2$, and Darboux's Harmonic equation for $F_4$. Further, we show the natural line congruences generated by the Laplace transforms of each constitute a $W$-congruence, an important example of line congruence in which a surface and its Laplace transform are simultaneously locally conformally equivalent.
\end{abstract}
\keywords{Line congruences, Laplace transform, projective surfaces, Appell hypergeometric functions}
\subjclass[2020]{33C65, 35A30, 53A20}
\maketitle
%
%
\section{Introduction}
\label{s-intro}
Our primary objects of study in this article are from classical differential geometry: projective surfaces $S \subset \mathbb{P}^3$, as well as line congruences, which by definition is a two-parameter family $\mathbf{L}$ of lines in $\mathbb{P}^3$. A line congruence and can be usefully thought of as a pair of surfaces $\mathbf{L} = \{S,S^\prime\}$ such that each line intersects the surfaces in a suitable way. We investigate in particular, local differential geometric structures on $S$ that come from immersing $S$ into projective space, and how such geometry on a pair of surfaces $S,S^\prime$ impacts $\mathbf{L}$. This subject is quite old and was historically important in influencing both the development of projective and differential geometry of surfaces. As we will see, there are intimate and inevitable connections to linear PDEs as well, which is part of why historically there was much attention payed to developing the subject. As such, many great mathematicians over the generations have contributed to it. Moreover, projective surfaces feature prominently in algebraic geometry, and so this area is a fertile mix of many beautiful areas of mathematics.
\par  As a line congruence is essentially a pair of surfaces, we spend some time to talk about surfaces in projective space. Since we are interested in issues related to differential geometry and differential equations, we consider projective space defined over a complete field $\mathbb{K}$, which is either $\mathbb{R}$ or $\mathbb{C}$. In algebraic geometry, a (connected) projective surface $S \subset \mathbb{P}^3$ is defined by the locus of solutions to a single (irreducible) homogeneous polynomial equation of degree $n$ in the variables $X_0,X_1,X_2,X_3$, where $\mathbb{P}^3 = \mathbb{P}(X_0,X_1,X_2,X_3)$. For example, a quadric surface is just the image in $\mathbb{P}^3$ of the vanishing locus of quadratic form on the 4-dimensional vector space $\mathbb{K}^4$. When $\mathbb{K} = \mathbb{R}$, we can refer to a quadric surface by the signature of the quadratic form. An important example are quadrics of signature $(2,2)$, which include the surface $Q$ defined by the equation
\begin{equation}
	\label{eq-quadric}
	Q \; : \quad X_0X_3 - X_1X_2 = 0 \, .
\end{equation}
In fact, every signature $(2,2)$ quadric is projectively equivalent to (\ref{eq-quadric}), while for $\mathbb{K}=\mathbb{C}$ every quadric is projectively equivalent to (\ref{eq-quadric}). 
\par The algebraic conditions described above are global properties of a projective surface $S \subset \mathbb{P}^3$. In fact, it is possible to detect aspects of these geometric qualities from the local projective differential geometry of $S$, which in turn can be determined from knowledge of an important system of linear PDEs whose solutions define an immersion of the surface into $\mathbb{P}^3$. In \S \ref{s-proj_DG}, we review the pertinent aspects of the connections between projective differential geometry and linear systems of PDE of rank-4.
\par This prepares us to discuss line congruences $\mathbf{L}= \{S,S^\prime\}$ in \S \ref{s-line_congruences}. We will be particularly interested in when $S,S^\prime$ are mutually tangent to $\mathbf{L}$ in a suitable way; such surfaces are known as \emph{focal} surfaces of a congruence. The Laplace transform provides a mechanism of transforming a given surface $S$ to a new surface $S^\prime$ such that both are focal surfaces for a certain line congruence $\mathbf{L} = \{S,S^\prime\}$. A Weingarten congruence, or $W$-congruence, is a line congruence in which the local conformal structures of the projective second fundamental forms on $S$ and $S^\prime$ are simultaneously equivalent; historically, much attention has been paid to these congruences. 
\par Our main results in \S \ref{s-line_congruences} (Theorems \ref{thm-positive_laplace_system}, \ref{thm-negative_laplace_system}) characterize the rank-4 linear systems of Laplace transformed surfaces purely in terms of the rank-4 linear system of the original surface. We believe that this is the first time the full formulae have appeared in the literature in this form. These results can be shown to be in perfect agreement with some known results in the literature \cite[\S 3.2]{MR2216951}, but their novelty is that our formulae are expressible solely in terms of the rank-4 linear system (\ref{eq-rank_4_system}) of the original surface equipped with so-called \emph{conjugate coordinates}, a system of coordinates on a projective surface that will be introduced in \S \ref{ss-focal}. These coordinates are an important notion and so we will provide three characterizations of these in Definitions \ref{def-conjugate_coords1}, \ref{def-conjugate_coords2}, and \ref{def-conjugate_coords3}. We calculate the fundamental invariants of the projective metric and cubic form which determine fundamental aspects of the geometry of $S$ (see Proposition \ref{prop-vanishing_invariants}) in conjugate coordinates in Theorem \ref{thm-conjugate_invariants}. Moreover, we provide explicit formulae in Corollaries \ref{cor-positive_W}, \ref{cor-negative_W} for the Weingarten invariant $W$, a quantity whose nonvanishing provides the only obstruction to a line congruence $\mathbf{L}$ being a $W$-congruence, purely in terms of the rank-4 linear system. To the best of our knowledge, this is the first time these quantities have appeared in the literature in this form.
\par In the remaining sections we apply the results of \S \ref{s-line_congruences} to surfaces that are determined from certain special functions, namely Appell's bivariate hypergeometric functions $F_2$ and $F_4$. The surfaces $S_{F_2}$ and $S_{F_4}$ that they define in $\mathbb{P}^3$ are special, since they inherently depend on a number of parameters that appear in the definition of the hypergeometric functions. For example, for certain values of the parameters (in fact, for certain linear subvarieties in the space of parameters) these surfaces become quadric surfaces, projectively equivalent to the surface $Q$ in (\ref{eq-quadric}). Generally, we can change the parameters and not alter the fundamental geometry of the surfaces in significant ways; such surfaces are called \emph{projectively applicable}. The surfaces $S_{F_2}$ and $S_{F_4}$ were studied by Sasaki \cite{MR1858701} in a method distinct from our approach in this paper.
\par We give background information on Appell's hypergeometric functions in \S \ref{ss-integral_reps}, giving their well-known integral representations. From here, in \S \ref{ss-GKZ}, we show how the GKZ method \cite{MR1080980} can be implemented to produce the rank-4 linear systems that annihilate $F_2$ and $F_4$. Though the GKZ method is well-known, perhaps particularly to certain types of algebraic geometers, we believe it will be useful to newcomers to the method to have a complete example of it being implemented in the literature. Finally, in \S \ref{s-hypergeometric_congruences}, we apply our computational results of \S \ref{s-line_congruences} to $S_{F_2}$ and $S_{F_4}$. The Laplace transforms and associated line congruences of such surfaces has not been studied much before. We show that the sequence of Laplace invariants for $F_2$ is equivalent to the Euler-Poisson-Darboux equation (\ref{eq-EPD}), and the sequence for $F_4$ is equivalent to Darboux's Harmonic equation (\ref{eq-harmonic_equation}). Moreover, we show in Theorems \ref{thm-F2_W_congruence} and \ref{thm-F4_W_congruence} that the Laplace transforms of $S_{F_2}$ and $S_{F_4}$ are $W$-congruences.
\par It is interesting to note that the entire genesis of this article comes from a certain beautiful special function identity that the second author learned a number of years ago due to Clingher, Doran, \& Malmendier \cite[Corollary 2.2]{MR3767270}, that relates Appell's $F_2$ function to the Gauss hypergeometric function $_2F_1$ as a two-parameter Euler integral transform:
\begin{equation*}
	\scalemath{.8}{ F_2\left(\left.\begin{array}{c}
			\alpha ; \beta_1, \beta_2 \\
			\gamma_1, \gamma_2
		\end{array} \right\rvert\, \frac{1}{s}, 1-\frac{t}{s}\right)=-\frac{\Gamma\left(\gamma_2\right)s^\alpha(s-t)^{1-\gamma_2}}{\Gamma\left(\beta_2\right) \Gamma\left(\gamma_2-\beta_2\right)}
		  \int_s^t \frac{d u}{(s-u)^{1-\beta_2}(u-t)^{1+\beta_2-\gamma_2} u^\alpha}{ } \, _2 F_1\left(\left.\begin{array}{c}
			\alpha, \beta_1 \\
			\gamma_1
		\end{array} \right\rvert\, \frac{1}{u}\right) .}
\end{equation*}
Specifics will be recalled in Lemma \ref{lem-F2_integral_transform}. This article can be thought of as an extended interpretation of and an attempt to deeply understand this identity.
\subsection{Conventions \& Preliminaries}
\label{ss-prelims}
\par In the scope of this article, our results are applicable in both the real $C^\infty$ category and complex analytic category, though it is necessary to impose a mild assumption in the $C^\infty$ case that will be described in \S \ref{s-line_congruences}. We let $\mathbb{K}$ denote either $\mathbb{R}$ or $\mathbb{C}$ as we consider working in either respective category. We let $V^n \cong \mathbb{K}^n$ denote an $n$-dimensional vector space over $\mathbb{K}$, and we let $\bigwedge^k V^n$ be the vector space of antisymmetric $k$-forms on $V^n$, contained in the exterior algebra $\bigwedge^\bullet V^n = \bigoplus_{k=0}^n \bigwedge^k V^n $. We will work with projective spaces $\mathbb{P}^n$ and Grassmannians $\mathrm{Gr}(m,V^{n+1})$, whose points respectively parameterize lines and $m$-planes in the vector space $V^{n+1}$ and are each respectively smooth, compact manifolds of dimension $n$ and $m(n+1-m)$. By definition, $\mathbb{P}^n = V^{n+1} - \{0\} / \sim$, where $v \sim v^\prime$ is the equivalence relation defined by $v^\prime = kv$ for some nonzero $k \in \mathbb{K}$. A similar, but more involved definition yields $\mathrm{Gr}(m,V^{n+1})$. The canonical projection map 
\begin{equation}
	\label{eq-projective_quotient}
		\pi : \, V^{n+1} - \{0\} \to \mathbb{P}^n \, , \quad \quad v \mapsto [v]
\end{equation}
induced by equivalence relation $\sim$ allows for the use of the familiar homogeneous coordinates on $\mathbb{P}^n$ and lifts to $V^{n+1}$, which we freely use. A nice summary of this introductory material in \cite[Part 1, \S 1.1 - 1.4]{ward1990twistor}. 
\par Of course, every projective space is a Grassmannian, $\mathbb{P}^n = \mathrm{Gr}(1,V^{n+1})$. The first Grassmannian that is not a projective space is $\mathrm{Gr}(2,V^4)$, which as said above naturally appears in questions related to the geometry of $\mathbb{P}^3$ as the points in $\mathrm{Gr}(2,V^4)$ parameterize lines in $\mathbb{P}^3$. Given two points $p_1,p_2 \in \mathbb{P}^3$, we can associate the line $\overline{p_1p_2} \cong \mathbb{P}^1 \subset \mathbb{P}^3$ passing through $p_1$ and $p_2$ with the equivalence class of the plane $[p_1 \wedge p_2] \in \mathbb{P}(\bigwedge^2 V^4) = \mathbb{P}^5$. Concretely, this defines a smooth (algebraic, in fact) map $\rho : \mathrm{Gr}(2,V^4) \to \mathbb{P}^5$ called the \emph{Pl\"ucker embedding}, defined by the formula
\begin{equation}
	\label{eq-plucker_embedding}
	\rho : \overline{p_1p_2} \mapsto [p_{01} \, : \, p_{02} \, : \, p_{03} \, : \, p_{12} \, : \, p_{13} \, : \, p_{23} \,] \, , \quad p_{ij} := X^1_iX^2_j - X^1_jX^2_i \, 
\end{equation}
where $p_k = [\, X^i_0 \, : \, \cdots \, : \, X^i_3 \,]$, $k=1,2$. The coordinates $p_{ij}$ yielding the image of the Pl\"ucker embedding satisfy the quadratic equation
\begin{equation}
	\label{eq-klein_quadric}
	Q^4 \; : \quad p_{01}p_{23} - p_{02}p_{13} + p_{03}p_{23} = 0 \, ,
\end{equation}
defining a 4-dimensional quadric hypersurface $Q^4 \subset \mathbb{P}^5$ called the Klein quadric. The Pl\"ucker embedding is an isomorphism, so $\mathrm{Gr}(2,4) \cong Q^4$ as projective algebraic varieties. We will naturally encounter subvarieties (that is, algebraic submanifolds) of $Q^4$ from our geometric considerations in $\mathbb{P}^3$.
\par For a manifold $M$ over $\mathbb{K}$, all relevant geometric data is assumed to be smooth in the appropriate category even if it is not said explicitly. Over $\mathbb{C}$ we take smooth to mean holomorphic and a diffeomorphism to be a biholomorphism, and we always work in the analytic topology. If $U$ is an open set, we take $C^\infty(U)$ to denote the local algebra of smooth or holomorphic functions if $\mathbb{K}=\mathbb{R},\mathbb{C}$, respectively. We use familiar notation for common Lie groups, taken over the appropriate $\mathbb{K}$ as necessary. Similarly, we use familiar notation for bundles of tensors and differential forms on a manifold $M$. Hence $T^r_s(M)$ is the bundle of $(r,s)$ tensors on $M$ with $r$ covariant and $s$ contravariant indices, while $S^k(M), \, \Omega^k(M) \subset T^k_0(M)$ and denotes respectively the bundles of symmetric and antisymmetric covariant forms. By misuse of notation, we write $t \in T^r_s(M)$ to denote a section of the tensor bundle. When local coordinates have been equipped, we use the usual summation convention on repeated indices, unless otherwise noted.
\par The following section provides a quick review of Laplace invariants of second order hyperbolic PDE in the plane, which plays an important role in this article.
\subsubsection{Laplace Invariants}
\label{sss-laplace_invariants}
\par In this arena one naturally encounters linear, second order, scalar hyperbolic PDE in the plane of the form
\begin{equation}
	\label{eq-hyperbolic_PDE}
	z_{xy} + a z_x + b z_y + c z = 0 \, ,
\end{equation}
where $a,b,c$ are smooth functions in an open set $(x,y) \in U$ and subscripts as usual denote partial derivatives. These PDE have been studied extensively and historically, see for example Darboux \cite{MR1324110} for a classical account, as well as Iwasaki \cite{MR946650} or Zeitsch \cite{math6120316} for their role in more modern considerations. Equation (\ref{eq-hyperbolic_PDE}) arises naturally in connection to the geometry of projective surfaces $S \subset \mathbb{P}^3$, where the variables $(x,y)$ are certain local coordinates on $S$ called \emph{conjugate coordinates} that will be defined in ~\S \ref{ss-focal}.
\par It is important to understand geometric aspects related to (\ref{eq-hyperbolic_PDE}). The equation itself is not uniquely determined. One seeks so-called \emph{covariant} transformations that preserve the form of (\ref{eq-hyperbolic_PDE}). Due to linearity and homogeneity, we may rescale the unknown $z \mapsto  fz$ for $f \in C^\infty(U)$, under which (\ref{eq-hyperbolic_PDE}) transforms as
\begin{equation}
	\label{eq-transform1}
	z_{xy} + \hat{a} z_x + \hat{b} z_y + \hat{c} z = 0 \, ,
\end{equation}
where the components $\hat{a},\hat{b},\hat{c}$ are given by
\begin{equation}
	\label{eq-transform1_components}
	\hat{a} = a+(\log f)_y \, , \quad
		\hat{b} = b+(\log f)_x \, , \quad
		\hat{c} = c+a(\log f)_x+b(\log f)_y+f_{x y} / f \, .
\end{equation}
A transformation $z \mapsto fz$ will be called a projective gauge transformation of (\ref{eq-hyperbolic_PDE}); we say the two equations (\ref{eq-hyperbolic_PDE}) and (\ref{eq-transform1}) are \emph{projectively equivalent}. Moreover, one may change coordinates $(x,y) \mapsto (u=u(x),v=v(y))$, which transforms (\ref{eq-hyperbolic_PDE}) as 
\begin{equation}
	\label{eq-transform2}
\small	z_{uv}+a \frac{d y}{d v} z_u+b \frac{d x}{d u} z_v+c \frac{d x}{d u} \frac{d y}{d v} z=0 \,  .
\end{equation}
In addition, one may consider the involutive transformation of swapping the role of $x$ and $y$. These transformations have a natural geometric interpretation in terms of the geometry of a projective surface equipped with conjugate coordinates. Such matters will be discussed in detail in \S \ref{s-proj_DG} and \S \ref{s-line_congruences}.
\par To find geometric data encoded by (\ref{eq-hyperbolic_PDE}), one seeks quantities that are invariant under the transformations above. To that end, one finds that the quantities 
\begin{equation}
	\label{eq-laplace_invariants}
	\scalemath{1}{\mathsf{h}=a b+a_x-c, \quad \mathsf{k}=a b+b_y-c }
\end{equation}
have natural invariance properties. If we set $\hat{\mathsf{h}},\hat{\mathsf{k}}$ by the analogous quantities for (\ref{eq-transform1}) and (\ref{eq-transform2}), then one finds $\hat{\mathsf{h}} =\mathsf{h} $ and $\hat{\mathsf{k}} = \mathsf{k}$ from (\ref{eq-transform1}), and there is equality of the symmetric 2-forms
\begin{equation}
	\label{eq-laplace_2-forms}
	\scalemath{.85}{ \mathsf{h} (dx \otimes dy + dy \otimes dx) = \hat{\mathsf{h}}(du \otimes dv + dv \otimes du) \, , \quad
		\mathsf{k} (dx \otimes dy + dy \otimes dx) = \hat{\mathsf{k}}(du \otimes dv + dv \otimes du) \, }
\end{equation}
from (\ref{eq-transform2}). One makes the tensors in (\ref{eq-laplace_2-forms}) symmetric to take into account the involutive symmetry of exchanging coordinates. 
\begin{definition}
	\label{def-laplace_invariants}
	The quantities $\mathsf{h}$ and $\mathsf{k}$ (or the associated symmetric 2-forms) are called the \emph{Laplace invariants} of (\ref{eq-hyperbolic_PDE}). As a convention, we will refer to $\mathsf{h}$ as the \emph{positive} Laplace invariant and $\mathsf{k}$ as the \emph{negative} Laplace invariant.
\end{definition}
\par In general, the solution space of (\ref{eq-hyperbolic_PDE}) is infinite dimensional. However, sometimes there are ``fundamental'' solutions that come from either an ODE or a system of (coupled) PDEs with a finite dimensional solution space. Such a fundamental solution $R(x,y)$ is called a \emph{Riemann} function of (\ref{eq-hyperbolic_PDE}). See \cite[\S 1]{math6120316} for a complete definition, which is not strictly necessary for our purposes.
\par It is thus of interest to determine to what extent (\ref{eq-hyperbolic_PDE}) is integrable. Laplace invariants provide an insight into this question. For example, the condition $h = 0$ implies that (\ref{eq-hyperbolic_PDE}) is solvable explicitly by quadrature, see \cite[Rmk. 4.4]{MR2216951}. There are other conditions by which the equation simplifies. For example, when the positive and negative Laplace invariants are equal $\mathsf{h} = \mathsf{k}$, we have that $a_x = b_y$. Hence the $1$-form $\phi = bdx + ady$ is exact, so by the transformation property (\ref{eq-transform1_components}), the equation can be simplified to $z_{xy} + \hat{c}z = 0$ for a function $f$ satisfying $-d(\log f) = \phi$.
\par We recall two such hyperbolic equations.
\begin{example}
	\label{ex-EPD}
	\begin{flushleft}
		\textbf{Euler-Poisson-Darboux Equation.} This equation is of the form
		\begin{equation}
			\label{eq-EPD}
			z_{xy} - \frac{\beta^\prime}{x-y}z_x - \frac{\beta}{x-y}z_y = 0 \, ,
		\end{equation}
		where $\beta,\beta^\prime \in \mathbb{K}$ are constants. The equation has been well studied, as its name would suggest. See for example \cite{MR323959}, \cite[\S 4.6]{MR2216951} for a modern perspective. For $\beta^\prime = -\frac{1}{2}$, $\beta=\frac{1}{2}$, the second author found that these equations also arise naturally in connection with the geometry of K3 surfaces \cite[Cor. 6.2.86, Prop. 6.2.92]{schultz_geometry_2021}. The Laplace invariants of the EPD equation are
		\begin{equation}
			\label{eq-EPD_laplace_invariants}
		\mathsf{h} = \frac{\beta(\beta^\prime+1)}{(x-y)^2} \; , \quad \mathsf{k} = \frac{\beta^\prime(\beta+1)}{(x-y)^2} \; .
		\end{equation}
	\end{flushleft}
\end{example}
\begin{example}
	\label{ex-harmonic_equation}
	\begin{flushleft}
		\textbf{Harmonic Equation.} This equation is of the form 
		\begin{equation}
			\label{eq-harmonic_equation}
		z_{xy} + \left(\frac{\alpha(\alpha-1)}{(x-y)^2}-\frac{\beta(\beta-1)}{(x+y)^2}\right)z = 0 \, ,
		\end{equation}
		for constants $\alpha,\beta$, and was named and studied by Darboux \cite{MR1324110}. A modern perspective relevant to our aims here is provided by Iwasaki \cite{MR946650}, who showed that a Riemann function can be given in terms of Appell's $F_4$ function. Since the coefficients $a = 0 = b$ in (\ref{eq-hyperbolic_PDE}), the Laplace invariants $\mathsf{h} = \mathsf{k} = -c$ are equal,
		\begin{equation}
		\label{eq-harmonic_laplace_invariants}
			\mathsf{h} = \mathsf{k} = -\left(\frac{\alpha(\alpha-1)}{(x-y)^2}-\frac{\beta(\beta-1)}{(x+y)^2}\right) \, .
		\end{equation}
	\end{flushleft}
\end{example}
\section{Projective differential geometry of surfaces}
\label{s-proj_DG}
\par In what follows, we utilize the presentation of relevant results in \cite[\S 2]{MR2216951} and \cite[\S 3]{ashleyschultz2025}. More detailed descriptions can be found there. Let $S \subset \mathbb{P}^3$ be a surface equipped with local coordinates $(x,y)$ in coordinate neighborhood $U$. We assume that $S$ is both connected and orientable. We will say that two surfaces $S \cong S^\prime$ are isomorphic if there is an automorphism $g \in \mathrm{PGL}(4)$ such that $S^\prime = g \cdot S$. Shrinking the local coordinate neighborhood $U$ if necessary, we assume that $\psi$ is an isomorphism, and identify $S \cong \psi(U)$.
\subsection{Geometry of rank-4 linear systems}
\label{ss-rank-4_systems}
\par It was known to the old masters (see e.g., Wilczynski \cite{MR1500552,MR1500783,MR0131232}, Darboux \cite{MR1324110}, and references therein) that the data $(S,\psi)$ is determined by a coupled system of linear PDE of rank-4 in the local coordinates $(x,y)$ of the form 
\begin{equation}
	\label{eq-rank_4_system}
	\begin{cases}
		z_{xx} = \ell z_{xy} + a z_x + b z_y + p z \, , \\
		z_{yy} = m z_{xy} + c z_x + f z_y + q z \, , \\
	\end{cases}
\end{equation}
where the coefficients are smooth functions of the local coordinates and subscripts indicate partial derivatives. That (\ref{eq-rank_4_system}) is of rank-4 means that there are local solutions $z_0(x,y),z_1(x,y),z_2(x,y),z_3(x,y)$ that determine the immersion $\psi$ by the formula
\begin{equation}
	\label{eq-immersion}
	\psi(x,y) = [z_0(x,y) : z_1(x,y) : z_2(x,y) : z_3(x,y)] \, .
\end{equation}
Given a system of the form (\ref{eq-rank_4_system}) of rank-4, it is clear that (\ref{eq-immersion}) defines a surface $S \subset \mathbb{P}^3$. Conversely, given $(S,\psi)$, the components of $\psi$ given by (\ref{eq-immersion}) necessarily satisfy a system of the form (\ref{eq-rank_4_system}) of rank-4. Such a system is of rank-4 if and only if the coefficient functions satisfy a set of nonlinear differential constraints known as \emph{integrability conditions}, which can be obtained by examining the equation $(z_{xx})_{yy} = (z_{yy})_{xx}$.
\par In fact, the coefficient functions $\ell,m,\dots,p,q$ themselves determine local projective differential geometric data of $S \subset \mathbb{P}^3$. Wilczynski \cite{MR1500783} determined that the most general form of transformation of the independent and dependent variables in the linear system (\ref{eq-rank_4_system}) that preserves its form is given by $(x,y;z) \mapsto (u,v;f(u,v)z)$, where $x = x(u,v), y= y(u,v)$ defines a local diffeomorphism $\phi : S \to S$ and $f : S \to \mathbb{K}$ is a nonvanishing smooth function.
\par One differential geometric manifestation of the system is gained by writing the system in Pfaffian form 
\begin{equation}
	\label{eq-connection_form}
	d e = \omega e \, ,
\end{equation}
where $e = (\psi,\psi_x,\psi_y,\psi_{xy})$ and $\omega$ is a $4 \times 4$ matrix-valued 1-form whose entries are obtained in a straightforward manner from (\ref{eq-rank_4_system}) called the \emph{connection form}. Equation (\ref{eq-connection_form}) is called the \emph{moving frame equation}. The connection form $\omega$ determines a connection $\nabla$ on a rank-4 vector bundle $E \cong L \oplus TS \oplus L^*,$ where $L = \mathrm{span}(\psi)$, $TS = \mathrm{span}(\psi_x,\psi_y)$ is the tangent bundle of $S$, and $L^* = \mathrm{span}(\psi_{xy})$. Then $L^* \cong N_S$ is identified with the normal bundle of $S \subset \mathbb{P}^3$, where $N_S := T\mathbb{P}^3 |_S / TS$. A gauge group $G \subset \mathrm{PGL}(4)$ is determined by the transformations that stabilize this decomposition. Then $G$ acts by $e \mapsto \tilde{e}=ge$ and $\omega \mapsto \tilde{\omega} = g \omega g^{-1} + dg \cdot g^{-1}$
, where $g \in G$ is a smooth $G$-valued function on $S$ \cite[Eq. (3.6)]{ashleyschultz2025}. It is well-known that $\omega$ satisfies the relation
\begin{equation}
	\label{eq-integrability_conditions}
	d\omega - \omega \wedge \omega = 0 \, ,
\end{equation}
which is known as the \emph{Mauer-Cartan equation}. Equation (\ref{eq-integrability_conditions}) simply says that the connection $\nabla$ is flat, i.e. that the curvature of $\nabla$ is zero. Moreover, the integrability conditions $(z_{xx})_{yy} = (z_{yy})_{xx}$ are equivalent to
the Mauer-Cartan equation (\ref{eq-integrability_conditions}).
\par From (\ref{eq-rank_4_system}), $\omega$ is easily computed as 
\begin{equation}
	\label{eq-rank_4_connection_form}
	\scalemath{.85}{\omega=\left(\begin{array}{cccc}
		0 & d x & d y & 0 \\
		p \, d x & a \, d x & b \, d x & \ell \, d x+d y \\
		q \, d y & c \, d y & f \, d y & d x+m \, d y \\
		* & * & * & *
	\end{array}\right) \, ,}
\end{equation}
allowing for the explicit computation of projective differential geometric quantities. Write the frame $e = (e_0,e_1,e_2,e_3)$, where we regard each as row vectors.  When using indices on the frame $e$, we signify tangent directions by letting latin characters range from $1,2$, and general indices $0,\dots,3$ denoted by greek indices. By the definition of $e_3 = \psi_{xy}$, we have $d e_0=\omega_0^0 e_0+\omega_0^1 e_1+\omega_0^2 e_2,$ so that $\omega^3_0 = 0$. Define $\omega^i := \omega^i_0$ for $i=1,2$; from (\ref{eq-rank_4_connection_form}) these are nothing but $dx$ and $dy$. The integrability conditions (\ref{eq-integrability_conditions}) reveal that by exterior differentiation of the equation $\omega^3_0 = 0$ that $\omega^i \wedge \omega^3_i = 0$. From the classical Cartan lemma on the exterior algebra \cite[Thm. 4.4]{Sternberg1964}, there are smooth functions $h_{ij} = h_{ji}$ such that $\omega^3_i = h_{ij}\omega^i$. Thus it is natural to consider the normal bundle valued symmetric covariant tensor $h \in S^2(S) \otimes N_S$ given by $	h := h_{ij} \, \omega^i \otimes \omega^j$. We always assume that $h$ is nondegenerate. The form $h$ plays the role of the projective second fundamental form of $S \subset \mathbb{P}^3$. From (\ref{eq-rank_4_connection_form}), the form $h$ is computed simply as 
\begin{equation}
	\label{eq-second_fundamental_form}
	h = \ell \, d x \otimes d x+d x \otimes d y+d y \otimes d x+m \, d y \otimes d y \, .
\end{equation}
It is commonplace to make the additional assumption that in the real case $\mathbb{K} = \mathbb{R}$ that $h$ has signature $(1,1)$, which we shall also freely assume in this article. Such surfaces are the projective analogue of hyperbolic surfaces in euclidean differential geometry. It is straightforward to show, that under gauge transformation, the conformal class $[h]$ is preserved, so we are free to consider $h$ up to scaling by a nonvanishing function. It was shown in \cite[Prop. 3.3]{ashleyschultz2025} that the covariant transformations of Wilczynski $z \mapsto fz$ act isometrically on $h$. 
\par There is also a symmetric cubic tensor $\Phi$ that plays a significant role. It is convenient to transform the frame $e \mapsto \tilde{e} = ge$ by suitable gauge transformation $g \in G$ so that $|\det(h)|=1$ and $\omega^0_0 + \omega^3_3 = 0$. Then dropping decorations on components of the transformed objects $\tilde{\omega}$ and $\tilde{h}$, define $\Phi_{ijk}$ by the equation
\begin{equation}
	\label{eq-cubic_components}
	\Phi_{i j k} \omega^k = d h_{i j}-h_{k j} \omega_j^k-h_{i k} \omega_j^k .
\end{equation}
The $\Phi_{ijk}$ are then totally symmetric in the indices $i,j,k = 1,2$. It is natural then to consider the symmetric cubic form $	\Phi=\Phi_{ijk} \, \omega^i \otimes \omega^j \otimes \omega^k$. An analogous computation as above shows that the conformal class $[\Phi]$ is independent of the choice of projective frame $e$. 
\par It turns out to be useful to pick a specific representative $\varphi \in [h]$ that is an absolute projective invariant. To this end, one defines a scalar $\mathcal{F}$ by contracting $\Phi^{\otimes 2}$ with the inverse $h^{-1} = (h^{ij})$ as
\begin{equation}
	\label{eq-fubini_scalar}
	\mathcal{F}=\Phi_{i j k} \Phi_{p q r} h^{i p} h^{j q} h^{k r} 
\end{equation}
called the Fubini scalar. The transformation properties of $\mathcal{F}$ under a change of frame imply that $\varphi := \mathcal{F}h \in [h]$ is an absolute invariant.
\begin{definition}
	\label{def-projective_metric}
	The tensor $\varphi = \mathcal{F}h \in [h]$ is called the \emph{projective metric} of $S \subset \mathbb{P}^3$.
\end{definition}
Although $h$ is always assumed to be nondegenerate, the projective metric $\varphi$ may vanish, even identically, according to the behavior of the Fubini scalar $\mathcal{F}$. This leads to significant restrictions on the geometry of $S \subset \mathbb{P}^3$, as will be explained shortly. However, a careful examination of the integrability conditions $d\omega - \omega \wedge \omega = 0$ reveals that it is possible to deform the pair $(S,\psi)$ while keeping both $\varphi$ and $\Phi$ the same. Such is the case when the linear system (\ref{eq-rank_4_system}) contains auxiliary parameters or is allowed to depend on arbitrary functions, neither of which uniquely determine $S$. A surface admitting such a deformation is called \emph{projectively applicable}; see Definition \ref{def-proj_applicable} below. These notions are relevant to our discussion in \S \ref{s-appell}, \ref{s-hypergeometric_congruences}, when we discuss surfaces related to Appell's bivariate hypergeometric functions which ultimately depend on a small number of parameters.
\subsection{Digression on Quadrics}
\label{ss-quadrics_proj_applicable}
\par As a concrete example, consider the case of a linear system that corresponds to the quadric surface $Q \subset \mathbb{P}^3$ defined by (\ref{eq-quadric}). An important aspect of the geometry of $Q$ is that it forms a so-called \emph{ruled} surface. 
\begin{definition}
	\label{def-ruled_surface}
	A projective surface $R \subset \mathbb{P}^3$ is said to be \emph{ruled} if $R$ contains a 1-parameter family of projective lines $\mathbb{P}_x^1$, where $x$ is the local parameter of a curve in $\mathbb{P}^3$. Equivalently, a ruled surface $R \subset \mathbb{P}^3$ is given by the data of a curve in the Grassmannian $\mathrm{Gr}(2,4)$ or equivalently on the Klein quadric $Q^4 \subset \mathbb{P}^5$ in (\ref{eq-klein_quadric}).
\end{definition}
\par A practical way of representing $R$ is by giving a pair of distinct curves $s(x),t(x)$ that each lie on $R$ with common local parameter $x$, and writing $R = \{s(x),t(x)\}$ to denote the union of all lines $\mathbb{P}_x^1$ joining the points $s(x)$, $t(x)$. Then the ruled surface $R$ determines a curve in the Grassmannian $\mathrm{Gr}(2,4)$ (or equivalently on the Klein quadric $Q^4$) from the correspondence $x \mapsto [s(x) \wedge t(x)] \in \mathrm{Gr}(2,4)$ and then composing with the Pl\"ucker embedding (\ref{eq-plucker_embedding}) to get the curve on $Q^4$. Ruled surfaces are the projective analogue of the familiar ruled surfaces in euclidean geometry, such as the cylinder, cone, helicoid, and hyperbolic paraboloid. Of these, the hyperbolic paraboloid is an example of a quadric surface in euclidean space, and is well known to admit two independent rulings. The projective version of the hyperbolic paraboloid is precisely the quadric surface $Q$ in (\ref{eq-quadric}), which admits the independent rulings $x \mapsto [1 \, : \, x \, : \, 0 \, :\, 0]$, $y \mapsto [0 \, : \, 0 \, : \, 1 \, :\, y]$ for $x,y$ affine coordinates on independent copies of $\mathbb{P}^1$. Over $\mathbb{K} = \mathbb{C}$ all quadrics are doubly ruled. Conversely, any doubly ruled surface is projectively equivalent to a quadric. When we refer to a ruled surface $R$, we will always assume that $R$ is only singly ruled; more discussion appears in \S \ref{ss-ruled_surfaces_congruences}.
\par It is easy to see that the independent rulings given above determine an immersion $\psi : Q \to \mathbb{P}^3$ as 
\begin{equation}
	\label{eq-quadric_immersion}
	\psi(x,y) = [\, 1 \, : \, x \, : \, y \, : \, xy \,]
\end{equation}
whose respective components $z_0,\dots,z_3$ clearly satisfy the defining equation (\ref{eq-quadric}) of $Q$. By the considerations above, these components must be realizable as linearly independent solutions of a rank-4 linear system (\ref{eq-rank_4_system}). It is easy to see that such a system is nothing but
\begin{equation}
	\label{eq-quadric_system}
		z_{xx} = 0 \, , \quad
		z_{yy} = 0 \, .
\end{equation}
\par This has a number of consequences on the projective differential geometry of $Q$. First, by examining (\ref{eq-second_fundamental_form}), $h$ is simply the constant tensor $h = dx \otimes dy + dy \otimes dx$. However, this is not unique to quadric surfaces, but is a geometric manifestation of the restriction that $h$ have signature $(1,1)$. Any coordinate system on a general projective surface $S \subset \mathbb{P}^3$ in which $\ell = 0 = m$ is called an \emph{asymptotic coordinate system} on $S$. Asymptotic coordinates allows for study of $S$ in terms of the family of so-called \emph{Lie quadrics} attached to $S$; at each point $p \in S$, the Lie quadric $Q_p \cong Q$ is the quadric surface projectively equivalent to (\ref{eq-quadric}) that has second order contact with $S$ at $p$. In concrete terms, this means that locally the coordinate lines $x = \mathrm{const.}$, $y = \mathrm{const.}$ agree up to second order derivatives with the two independent ruling lines on the quadric $Q_p$. Given our considerations, such a coordinate system always exists on any projective surface $S \subset \mathbb{P}^3$, see e.g. \cite[\S 3.3]{ashleyschultz2025}. 
\par Thus the ruling lines $(x,y)$ on the quadric surface (\ref{eq-quadric}) are themselves asymptotic coordinates. More significantly, one finds by computation of the cubic form that $\Phi = 0$ for a quadric surface. Then trivially  $\mathcal{F} = 0$, so the projective metric $\varphi = 0$ for a $Q$. In fact, the vanishing $\Phi = 0$ is also is sufficient for a projective surface $S \subset \mathbb{P}^3$ to be (contained in) a quadric surface $Q$, see See \cite[Thm. 2.3]{MR2216951}. 
\par It is natural to ask upon what conditions the general rank-4 linear system (\ref{eq-rank_4_system}) maps to a quadric. Assume that $S$ is equipped with asymptotic coordinates $(x,y)$. It is always possible, by a suitable projective gauge transformation $z \mapsto fz$, to reduce the rank-4 linear system (\ref{eq-rank_4_system}) to 
\begin{equation}
	\label{eq-canonical_system}
	\begin{cases}
		z_{xx} = b z_y + p z \, , \\
		z_{yy} = c z_x + q z \, ,
	\end{cases}
\end{equation}
called the \emph{canonical system}. The canonical system (\ref{eq-canonical_system}) plays an important role in determining aspects of the geometry of $S$ related to the associated family of Lie quadrics and has historically been carefully studied. For example, it allows for a simple expression of the cubic form $\Phi$ and projective metric $\varphi$, since in this case $\{dx,dy\}$ is already a suitable coframe to compute in \cite[Eq. (2.15)]{MR2216951}, \cite[\S 2]{MR1762804}:
\begin{equation}
	\label{eq-invariants_asymptotic}
	\Phi =-2b dx^{\otimes 3} -2c dy^{\otimes 3} \, ,\quad
		\varphi = 8bc(dx \otimes dy + dy \otimes dx) \,.
\end{equation}
Thus $S \cong Q \subset \mathbb{P}^3$ is a quadric if and only if $\Phi = 0$ if and only if $b = c = 0$. 
\par In the case of a quadric $Q$, the integrability conditions reduce to $p_y = 0$ and $q_x = 0$. Hence the canonical system (\ref{eq-canonical_system}) reduces to a decoupled system of second order ODEs, as 
\begin{equation}
	\label{eq-ODEs}
		z_{xx} = p(x) z \, , \quad
		z_{yy} = q(y) z \, ,
\end{equation}
where $p(x)$ and $q(y)$ are otherwise unrestrained. The elementary theory of linear ODEs asserts that there are independent solutions $u_1(x),u_2(x)$ and $w_1(y),w_2(y)$ of each row in (\ref{eq-ODEs}), and thus an immersion $\psi : Q \to \mathbb{P}^3$ is given by
\begin{equation}
	\label{eq-general_quadric_immersion}
	\psi(x,y) = [\, u_1(x)w_1(y) \, : \, u_1(x)w_2(y) \, : \, u_2(x)w_1(y) \, : \, u_2(x)w_2(y) \,] \, .
\end{equation}
This clearly lies on the quadric surface (\ref{eq-quadric}). Hence the rank-4 system mapping into a quadric surface depends on two arbitrary functions $p(x)$ and $q(y)$, which can be viewed as ``deformations'' of the immersion $(Q,\psi)$ of the quadric surface. Sasaki \& Yoshida \cite{MR960834} have studied this in detail. We have the following terminology.
\begin{definition}
	\label{def-proj_applicable}
	An immersed surface $\psi : S \to \mathbb{P}^3$ admitting a non-trivial deformation is called \emph{projectively applicable}. An immersed surface admitting no such deformation is called \emph{very general}. 
\end{definition}
\par The precise definition of a projective deformation is not necessary for our purposes, and for example can be found in \cite[Def. 3.7]{ashleyschultz2025}. A heuristic way of detecting the existence of a projective deformation of $(S,\psi)$ is from the presence of arbitrary functions or parameters that do not change $\{\varphi,\Phi\}$. These types of systems will arise in \S \ref{s-appell} and \S \ref{s-hypergeometric_congruences} when we discuss Appell's hypergeometric functions. The relative vanishing of $\varphi$ and $\Phi$ can be summarized by the following result.
\begin{proposition}
	\label{prop-vanishing_invariants}
	For an immersed surface $S \subset \mathbb{P}^3$:
	\begin{enumerate}
		\item[(\textit{i})] For $S$ very general, $\varphi$ and $\Phi$ are differential invariants of $S$, that is, they locally characterize $S$ up to projective motion. 
		\item[(\textit{ii})] $S \cong Q$ is a quadric surface if and only if $\Phi=0$ vanishes. Consequently, the projective metric $\varphi = 0$ also vanishes for quadric surfaces.
		\item[(\textit{iii})] $S \cong R$ is a ruled surface if and only if $\varphi=0$ vanishes and $\Phi \neq 0$.
		\item[(\textit{iv})] if $S$ is projectively applicable with both $\varphi \neq 0$ and $\Phi \neq 0$, then the rank-4 linear system (\ref{eq-rank_4_system}) depends on at most three independent parameters.
	\end{enumerate}
\end{proposition}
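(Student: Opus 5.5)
We would prove the proposition by working throughout in asymptotic coordinates with the canonical system (\ref{eq-canonical_system}). There the invariants take the explicit form (\ref{eq-invariants_asymptotic}): $\Phi=-2b\,dx^{\otimes 3}-2c\,dy^{\otimes 3}$ and $\varphi=8bc(dx\otimes dy+dy\otimes dx)$. This is legitimate because asymptotic coordinates always exist and the gauge $z\mapsto fz$ reaching the canonical system only rescales invariants within their conformal classes. Each item then becomes a statement about the pair $(b,c)$ and the integrability conditions $d\omega-\omega\wedge\omega=0$ of the associated connection form (\ref{eq-rank_4_connection_form}).

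\textbf{Item (ii).} If $\Phi=0$, then $b=c=0$. The integrability conditions then force $p_y=0$ and $q_x=0$, so the system decouples as in (\ref{eq-ODEs}). The immersion therefore has the product form (\ref{eq-general_quadric_immersion}), which visibly satisfies (\ref{eq-quadric}). Conversely, on a quadric we use the two rulings as asymptotic coordinates. The coordinate curves are then lines, so $z_{xx}$ lies in $\mathrm{span}(z,z_x)$ and $z_{yy}$ lies in $\mathrm{span}(z,z_y)$, which forces $b=c=0$. Since $\mathcal F$ is quadratic in $\Phi$, we also get $\varphi=\mathcal F h=0$. Alternatively one may simply cite \cite[Thm. 2.3]{MR2216951}.

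\textbf{Item (iii).} $\varphi=0$ with $\Phi\neq0$ means $bc=0$ but $(b,c)\neq(0,0)$. Swapping coordinates if needed, we may take $c\equiv0$ on an open set with $b\neq0$. Then $z_{yy}=qz$, so each curve $x=\mathrm{const}$ spans only the two-dimensional space $\mathrm{span}(z,z_y)$, which is closed under $\partial_y$. Its image is therefore a line, and $S$ is ruled by the curves $x=\mathrm{const}$. Conversely, if $S$ is ruled, the ruling lines are asymptotic curves. Choosing $y$ as the parameter along the rulings gives $z_{yy}\in\mathrm{span}(z,z_y)$, hence $c=0$ and $\varphi=0$. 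We have $\Phi\neq0$ because a singly ruled surface is not a quadric, by (ii). One subtlety needs care here: the vanishing of $bc$ must give $c\equiv0$ or $b\equiv0$ on an open set. We get this by working on the open dense set where one factor is nonzero and using analyticity or connectedness.

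\textbf{Item (i).} This is the fundamental theorem of projective surface theory. Given $\varphi$ and $\Phi$ with $\varphi\neq0$, the Fubini normalization determines $h$, and hence the asymptotic directions and the coefficients $b,c$ up to the allowed coordinate changes. The remaining coefficients $p,q$ are then constrained by the Maurer--Cartan equations. For a very general surface these equations determine $p,q$ uniquely from $(b,c)$; this is essentially the definition of very general, since the absence of a deformation means no freedom remains. Two surfaces with the same $\varphi$ and $\Phi$ therefore have gauge-equivalent connection forms. Uniqueness of solutions of $de=\omega e$ up to a constant matrix in $\mathrm{GL}(4)$ then gives a projective motion between them.

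\textbf{Item (iv), the main obstacle.} We fix $b,c$ with $bc\neq0$ and write out the integrability conditions of (\ref{eq-canonical_system}). These form a system of PDEs for $(p,q)$ whose solution space is the space of deformations. The classical count, following Fubini--Cartan and \cite[\S 3]{ashleyschultz2025}, shows that this space is finite dimensional of dimension at most three. Concretely, we would solve the two compatibility equations for $p_y$ and $q_x$, differentiate further, and show that all higher derivatives of $p,q$ are determined by at most three constants of integration. We would try to quote this count from the literature rather than redo it, since a full derivation amounts to a Cartan--K\"ahler analysis.
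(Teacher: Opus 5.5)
Your proposal is correct and follows essentially the paper's own route. The paper gives no separate proof of this proposition. It justifies (ii) and (iii) in the surrounding text through the canonical system in asymptotic coordinates: $\Phi=-2b\,dx^{\otimes 3}-2c\,dy^{\otimes 3}$ and $\varphi=8bc(dx\otimes dy+dy\otimes dx)$, decoupling to (\ref{eq-ODEs}) when $b=c=0$, and $c=0=q$ for a ruled surface parametrized linearly along its rulings. It defers (i) and (iv) to the classical Fubini--Cartan literature, just as you do.

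One improvement to note: your sketch for (iv) closes without Cartan--K\"ahler. After integrating the $p_y$ and $q_x$ conditions, write $p=P(x)+p_0$ and $q=Q(y)+q_0$. The last integrability condition then becomes the linear relation $bQ'+2b_yQ-cP'-2c_xP=G$, with $G$ known. For $bc\neq0$, differentiating this relation determines $P''$ and $Q''$, and the relation itself imposes one linear constraint on $(P,P',Q,Q')$ at a base point. That leaves at most three constants.
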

\section{Line congruences associated to projective surfaces}
\label{s-line_congruences}
\par In the previous sections we have shown that a certain representative $\varphi \in [h]$ of the conformal class of the projective second fundamental form of $S \subset \mathbb{P}^3$ plays an important role in determining the projective differential geometry of the surface. Our assumptions on $h$ however, as is often the case in $2$-dimensional conformal geometry, imply from the existence of asymptotic coordinates on $S$ that $h$ is locally conformally flat. This can be interpreted in terms of how the conformal structure of $S$ locally compares near $p \in S$ with the conformal structure of the Lie quadric $Q_p$. A natural question is that if $S_1$ and $S_2$ are two distinct yet ``closely related'' surfaces (in a yet undefined way, to be introduced in \S \ref{ss-focal}), if it is possible to make a simultaneous comparison of their conformal structures in a geometrically meaningful way. 
\par Here we will describe precisely such a mechanism that will allow one to do so, the so-called \emph{Laplace transform}. This is a classical operation that has received significant attention for its usefulness in both the theory of projective surfaces and its connection to integrability of hyperbolic linear PDE in the plane, see e.g. \cite[\S 4]{MR2216951} and the references therein. In order to do so, we first must introduce some terminology related to ruled surfaces and line congruences, which are key ingredients in the geometry we wish to study.
\subsection{Ruled Surfaces and Line Congruences}
\label{ss-ruled_surfaces_congruences}
Let $R \subset \mathbb{P}^3$ be a ruled surface as in Definition \ref{def-ruled_surface}. If the line $\mathbb{P}_x^1$ is affinely parameterized by $y$, an immersion $\psi : R \to \mathbb{P}^3$ is given by $\psi(x,y) = s(x) + yt(x) \, \in \mathbb{P}^3$. It is straightforward to show that the parameters $(x,y)$ give asymptotic coordinates on $S$, so that each component $z_k(x,y)$ of $\psi$ is a solution of the canonical system (\ref{eq-canonical_system}) of $S$. That each component be linear in $y$ simply means that $c=0=q$. This is enough to confirm Proposition \ref{prop-vanishing_invariants} (iii). A similar analysis of the integrability conditions implies that the canonical system of a ruled surface can be expressed as 
\begin{equation}
	\label{eq-ruled_surface_canonical_system}
	z_{xx}=\left(\alpha(x) y^2+\beta(x) y+\gamma(x)\right) z_y+(-\alpha(x) y+\delta(x)) z \, , \quad z_{yy}=0
\end{equation}
for some choice of functions $\alpha(x),\beta(x),\gamma(x),\delta(x)$. This is sufficient to see that a ruled surface is projectively applicable.
\par There are several important examples of ruled surfaces beyond the doubly ruled quadric that play an important role in the context of this article. Given a curve $s(x)$ in $\mathbb{P}^3$, define the ruled surface $	R = \{s(x),t(x) = v + ks(x)\},$
where $k \in \mathbb{K}$ and $v \in V^4 - \{0\}$ are fixed. This ruled surface is called a \emph{cone} over the curve $s(x)$. From the curve $s(x)$ we can obtain another ruled surface as $R = \{s(x),t(x) = s^\prime(x)\}$, which gives a ruled surface consisting of tangent lines of the curve $s(x)$. This surface is called a \emph{tangent developable surface}. 
\par There is a notion of ruled surface that includes both cones and tangent developable surfaces. 
\begin{definition}
	\label{def-developable}
	A ruled surface $R = \{s(x),t(x)\}$ is said to be \emph{developable} if 
	\begin{equation}
		\label{eq-developable}
		s(x) \wedge t(x) \wedge s^\prime(x) \wedge t^\prime(x) = 0 \in \Omega^4(V^4) \cong C^\infty(V^4) \, .
	\end{equation}
	Hence a developable ruled surface is locally either a cone or a tangent developable surface. In either case, the generating curve $s(x)$ is called the \emph{directrix curve} of the developable surface.
\end{definition}
Developable surfaces will be play an important role in what follows. 
\begin{definition}
	\label{def-line_congruence}
	A line congruence $\mathbf{L}$ is a two-parameter family of lines in $\mathbb{P}^3$. Equivalently, a line congruence is a surface in the Grassmannian $\mathrm{Gr}(2,4)$, or equivalently on the Klein quadric $Q^4 \subset \mathbb{P}^5$ in (\ref{eq-klein_quadric}).
\end{definition}
\par A line congruence is quite a general object in $\mathbb{P}^3$. One way of constructing a line congruence $\mathbf{L}$ is as follows. Given a projective surface $S \subset \mathbb{P}^3$, first find a family of curves $\mathcal{C}$ that locally foliate $S$. That is to say every curve in the family $\mathcal{C}$ lies on $S$, and each point $p \in S$ lies on only a single curve $C \in \mathcal{C}$, at least in some neighborhood of the point $p$. Then a line congruence $\mathbf{L}$ can be constructed as
\begin{equation}
	\label{eq-tangent_congruence}
	\mathbf{L} := \bigcup_{p \in S} \overline{T_p C} \, ,
\end{equation} 
where $\overline{T_p C} \cong \mathbb{P}^1$ is the projective compactification of the tangent line $T_p C \subset T_pS$. The line congruence $\mathbf{L}$ is then naturally parameterized by the surface $S$. This is an important object so we formally define it.
\begin{definition}
	\label{def-tangent_congruence}
	The line congruence $\mathbf{L}$ in (\ref{eq-tangent_congruence}) parameterized by the surface $S$ is called the \emph{tangent congruence} associated to the family of curves $\mathcal{C}$ that lie on $S$.
\end{definition}
In an analogous way to the presentation of ruled surfaces above, a practical way of presenting a line congruence $\mathbf{L}$ is by giving a pair of surfaces $S_1,S_2$ such that each line in $\mathbf{L}$ intersects both $S_1$ and $S_2$ in a suitable way, and write $\mathbf{L} = \{S_1,S_2\}$. To be more precise, suppose that for $i=1,2$, the immersed surface $(S_i,\psi_i)$ both share a common local parameter space $(x,y) \in U$. Then the line congruence $\mathbf{L}$ can be denoted as
\begin{equation}
	\label{eq-local_congruence}
	\mathbf{L} = \{\psi_1(x,y),\psi_2(x,y)\}
\end{equation}
to signify the union of all lines $\mathbb{P}_{x,y}^1$ that joins the respective points $\psi_1(x,y) \in S_1$ and $\psi_2(x,y) \in S_2$. Then $\mathbf{L}$ determines a surface in the Grassmannian $\mathrm{Gr}(2,4)$ (or equivalently on the Klein quadric $Q^4$) from the correspondence $	(x,y) \mapsto [\psi_1(x,y) \wedge \psi_2(x,y)] \in \mathrm{Gr}(2,4)$ and then composing with the Pl\"ucker embedding (\ref{eq-plucker_embedding}) to get the surface on $Q^4$.
\par  Of course, from a given $\mathbf{L}$, there are many surfaces $S_1,S_2 \subset \mathbb{P}^3$ such that $\mathbf{L} = \{S_1,S_2\}$. It is of interest to determine then, from a given $\mathbf{L}$, when there is a representation in terms of surfaces $S_1,S_2$ as in (\ref{eq-local_congruence}) that has some geometric significance for the surfaces other than merely intersecting them suitably. For example, one may ask whether $\mathbf{L}$ is the tangent congruence (\ref{eq-tangent_congruence}) for some family of curves $\mathcal{C}$ on one of the surfaces $S_i$, or even if $\mathbf{L}$ is a simultaneous tangent congruence for some families $\mathcal{C}_i$ of curves on the respective surfaces $S_i$. Such a surface is called a \emph{focal} surface of the congruence $\mathbf{L}$.
\par This can be made precise via Definition \ref{def-developable}. Let $\mathbf{L} = \{\psi_1,\psi_2\}$ and $c(t) = (x(t),y(t))$ be a curve in the parameter space $U$. Then there is a corresponding ruled surface $R_c = \{\psi_1(c(t)),\psi_2(c(t))\} \subset \mathbf{L}$, and per Equation (\ref{eq-developable}), $R_c$ is developable when
\begin{equation}
	\label{eq-immersion_ruled_surfaces}
	\psi_1 \wedge \psi_2 \wedge \psi_1^\prime \wedge \psi_2^\prime = 0 \, .
\end{equation}
It follows that at each point in $u \in U$, there are generically two directions $X_i \in T_uU$, $i=1,2$, such that along the integral curves $c_i(t)$, the ruled surfaces $R_{c_i}$ are developable; moreover, the respective families $\mathcal{C}_i$ of directrix curves for each locally foliate surfaces $S_i$. It is reasonable to insist that these directions $X_1,X_2$ are mutually compatible, in the sense that the Lie derivative $[X_1,X_2]=0$ vanishes.
\begin{definition}
	\label{def-focal_surfaces}
	The surfaces $S_1,S_2$ composed of families of directrix curves $\mathcal{C}_1,\mathcal{C}_2$ as above are called \emph{focal surfaces} of the line congruence $\mathbf{L}$.
\end{definition}
Notice trivially that the focal surfaces generate the line congruence $\mathbf{L} = \{S_1,S_2\}$. When the ruled surfaces above are all tangent developable surfaces, then $\mathbf{L}$ is simultaneously a tangent congruence for each $(S_i,\mathcal{C}_i)$, where $\mathcal{C}_i$ foliates $S_i$ by the directrix curves.
\subsection{Projective surfaces as focal surfaces of line congruences}
\label{ss-focal}
In this section we examine an inverse problem that follows from the considerations above: when are a pair of surfaces $(S_1,S_2)$ focal surfaces of a line congruence $\mathbf{L}$? This is a classical problem that has been studied in a variety of contexts by many mathematicians of previous generations, see again \cite{MR2216951} and the many references throughout. We are interested in the case that $\mathbf{L} = \{S_1,S_2\}$ is simultaneously a tangent congruence of both surfaces $S_1$ and $S_2$. 
\subsubsection{Conjugate Coordinates}
\label{sss-conjugate_coordinates}
\par To that end, let us study (\ref{eq-immersion_ruled_surfaces}) in more detail. To ease notation, let us identify $\psi_1 \equiv z$ and $\psi_2 \equiv w$ to denote the unknowns in the corresponding rank-4 linear systems (\ref{eq-rank_4_system}). From the chain rule, (\ref{eq-immersion_ruled_surfaces}) can be written as the vanishing of quadratic form
\begin{equation}
	\label{eq-developable_quad_form}
	P(x^\prime)^2 + 2Qx^\prime y^\prime + R(y^\prime)^2 = 0 \, ,
\end{equation}
where $P,Q,R$ are determined from $z$ and $w$ as
\begin{equation}
	\label{eq-quad_coeffs}
	\scalemath{.7}{\begin{cases}
		P=z \wedge w \wedge z_x \wedge w_x \, , \\
		2 Q=z \wedge w \wedge z_x \wedge w_y+z \wedge w \wedge z_y \wedge w_x \, , \\
		R=z \wedge w \wedge z_y \wedge w_y \, .
	\end{cases}}
\end{equation}
The straightening lemma (see e.g., \cite[Prop. 1.53]{warner1971foundations}) assures that that since we are on a $2$-dimensional parameter space $U$, we may find new local coordinates $(x,y)$ in $U$ that straighten the vector fields $X_1 = \partial_x$ and $X_2 = \partial_y$ along whose integral curves the corresponding ruled surfaces are developable. In the new coordinate system, these integral curves of the vector fields $X_1,X_2$ are nothing but the coordinate lines $x = \mathrm{const.}, \; y= \mathrm{const.}$, respectively. Then $X_1 = \partial_x$ and $X_2 = \partial_y$ are the directions in which (\ref{eq-developable_quad_form}) is satisfied. 
\begin{definition}
	\label{def-conjugate_coords1}
	The coordinates $(x,y)$ described above are called \emph{conjugate coordinates} on the surfaces $S_1,S_2$. 
\end{definition}
\par An intrinsic description of conjugate coordinates on a surface $S \subset \mathbb{P}^3$ will be given shortly. First let us consider the implication of (\ref{eq-quad_coeffs}) in the conjugate coordinate system $(x,y)$. In the direction $X_2 = \partial_y$, that is along a coordinate line $x = \mathrm{const.}$, (\ref{eq-developable_quad_form}) reduces to $R=0$. Assuming that $Q \neq 0$, this implies that there are functions $\alpha,\beta \in C^\infty(U)$ such that
\begin{equation}
	\label{eq-R=0_consequence}
	w = \alpha z_y + \beta z \, .
\end{equation}
This equation says that at each point of $S_2$, $w$ lies along a line tangent to the surface $S_1$ in the direction of $\partial_y$. On the other hand, along the coordinate line $y = \mathrm{const.}$, we have $P=0$ and so we get that
\begin{equation}
	\label{eq-P=0_consequence}
	z = \gamma w_x + \delta w
\end{equation}
for some $\gamma,\delta \in C^\infty(U)$. Analogously as above, this equation implies that each point of $S_1$, $z$ lies along a line tangent to the surface $S_2$ in the direction of $\partial_x$. This is precisely what is meant by the surfaces $S_1$ and $S_2$ simultaneously being focal surfaces for the line congruence $\mathbf{L}$. 
\par Evaluating (\ref{eq-P=0_consequence}) with (\ref{eq-R=0_consequence}) and eliminating $w$, a simple calculation shows that 
\begin{equation}
	\label{eq-conjugate_eq1}
	z_{xy}+a z_x+b z_y+c z = 0 \, 
\end{equation}
where $a,b,c \in C^\infty(U)$ are simple functions of $\alpha,\beta,\gamma,\delta$ and relevant derivatives. This is precisely the type of second order hyperbolic PDE that was discussed in \S \ref{sss-laplace_invariants}. The assumption that $Q \neq 0$ implies that $z,z_x,z_y,z_{xx}$ are linearly independent across $U$. Thus there is linear relationship
\begin{equation}
	\label{eq-conjugate_eq2}
	z_{yy} + qz_{xx} + m z_x +n z_y + r z = 0 \, ,
\end{equation}
where the coefficient functions can be determined again from (\ref{eq-R=0_consequence}) and (\ref{eq-P=0_consequence})\footnote{Note that the appearance of the characters $a,b,c,q$ etc. is distinct from the role that they played in (\ref{eq-rank_4_system}).}. But these equations are then nothing but the rank-4 linear system (\ref{eq-rank_4_system}) expressed in conjugate coordinates with respect to the frame $e = (z,z_x,z_y,z_{xx})$. It is clear that analogous statements hold for $w$ as well. This gives us an intrinsic definition of conjugate coordinates on a surface $S$ in terms of the rank-4 linear system defining an immersion $\psi : S \to \mathbb{P}^3$. 
\begin{definition}
	\label{def-conjugate_coords2}
	A coordinate system $(x,y)$ on a surface $S \subset \mathbb{P}^3$ is called a system of \emph{conjugate coordinates} if the rank-4 linear system defining an immersion $\psi : S \to \mathbb{P}^3$ can be expressed as
	\begin{equation}
		\label{eq-rank-4_system_conjugate}
		\scalemath{1}{\begin{cases}
			z_{xy}+a z_x+b z_y+c z = 0 \, , \\
			z_{yy} + qz_{xx} + m z_x +n z_y + r z = 0 \, , 
		\end{cases}}
	\end{equation}
	that is, one of the equations in the coupled linear system is a second order hyperbolic equation as in Equation (\ref{eq-hyperbolic_PDE}).
\end{definition}
\par Then the discussion of \S \ref{ss-rank-4_systems} carries through in conjugate coordinates with respect to the frame $e = (z,z_x,z_y,z_{xx})$; the moving frame equation is again $de = \omega e$ where $\omega$ is analogously computed as in (\ref{eq-rank_4_connection_form}), and the integrability conditions are determined by the Maurer-Cartan equation $d\omega - \omega \wedge \omega =0$. In particular, we find the conformal class $[h]$ of the projective second fundamental form is generated by the diagonal tensor 
\begin{equation}
	\label{eq-conjugate_second_fundamental_form}
	h = dx \otimes dx - q \, dy \otimes dy \, .
\end{equation}
This gives our final characterization of conjugate coordinates. 
\begin{definition}
	\label{def-conjugate_coords3}
	A system of coordinates $(x,y)$ on a projective surface $S \subset \mathbb{P}^3$ are called \emph{conjugate coordinates} if the projective second fundamental form $h \in S^2(S) \otimes N_S$ is a diagonal tensor as in Equation (\ref{eq-conjugate_second_fundamental_form}).
\end{definition}
The terminology \emph{conjugate} comes from the following. Given a non-degenerate symmetric 2-form $g \in S^2(S)$, one says that two vector fields $X_1,X_2 \in TS$ are conjugate relative to $g$ if 
\begin{equation*}
	g(X_1,X_2) = 0 \, .
\end{equation*}
Provided that $[X_1,X_2]=0$, one may straighten the vector fields as above; in the resulting coordinate system $g$ will be a diagonal tensor. Clearly conjugate directions are independent of the conformal class $[g]$, which leads us back to Definition \ref{def-conjugate_coords3}. 
\par We end this section by providing expressions for the cubic form $\Phi$ and projective metric $\varphi$ in conjugate coordinates, analogous to the quantities that appear in (\ref{eq-invariants_asymptotic}) that were expressed in asymptotic coordinates.
\begin{theorem}
	\label{thm-conjugate_invariants}
	Let $S \subset \mathbb{P}^3$ be an immersed surface equipped with conjugate coordinates $(x,y)$, determined by the rank-4 linear system in Equation (\ref{eq-rank-4_system_conjugate}). Define the quantities $A,B$ by
	\begin{equation}
		\label{eq-AB_components}
			A = q_x + 4bq - 2m \, , \quad
			B = -q_y + (4a - 2n)q \, .
	\end{equation}
	Then there is a coframe $\{\omega^1,\omega^2\}$ of $T^*S$ in which the symmetric cubic form (\ref{eq-cubic_components}) $\Phi = \Phi_{ijk} \omega^i \otimes \omega^j \otimes \omega^k$ has components given by
	\begin{equation}
		\label{eq-cubic_components_conjugate}
		\Phi_{111} = -A \, , \quad
			\Phi_{112} = B \, , \quad
			\Phi_{122} = -qA \, ,\quad
			\Phi_{222} = qB \, ,
	\end{equation}
	and the Fubini scalar $\mathcal{F}$ (\ref{eq-fubini_scalar}) is given by 
	\begin{equation}
		\label{eq-fubini_scalar_conjugate}
		\scalemath{.8}{\mathcal{F} = \frac{8\sqrt{q}}{5}\left(qA^2-B^2\right) \, .}
	\end{equation}
	Thus the projective metric $\varphi$ is given by $\varphi = \mathcal{F}h$, which in the coframe $\{\omega^1,\omega^2\}$ takes the form
	\begin{equation}
		\scalemath{.8}{\label{eq-projective_metric_conjugate}
		\varphi = \mathcal{F}\left(\sqrt{q} \,\omega^1 \otimes \omega^1 - \frac{1}{\sqrt{q}} \, \omega^2 \otimes \omega^2\right) \, .}
	\end{equation}
\end{theorem}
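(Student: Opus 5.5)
The plan is a direct moving-frame computation in conjugate coordinates, carried out exactly as in \S\ref{ss-rank-4_systems}.

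\textbf{Step 1: the connection form.} Take the frame $e=(z,z_x,z_y,z_{xx})$. Use (\ref{eq-rank-4_system_conjugate}) to express $z_{xy}$ and $z_{yy}$ through the frame. Obtain $z_{xxy}$ by differentiating the first equation in $x$, and $z_{xxx}$ by combining the compatibility of the two equations, i.e.\ differentiating the second equation in $x$ and the first in $y$. This gives the $4\times 4$ matrix $\omega$ with $de=\omega e$. Its first row is $(0,dx,dy,0)$ and $\omega^3_0=0$. Reading off $\omega^3_1=dx$ and $\omega^3_2=-q\,dy$ reproduces $h=dx\otimes dx-q\,dy\otimes dy$, which is (\ref{eq-conjugate_second_fundamental_form}). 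Only the entries $\omega^3_i$, $\omega^j_i$ and $\omega^0_0+\omega^3_3$ will actually be needed. In particular the trace part involves $a,b,m,n$ and $q_x,q_y$.

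\textbf{Step 2: normalize the frame.} Apply a diagonal gauge transformation $g\in G$: rescale $e_0$ by a function $\lambda$, the tangent vectors by suitable factors, and $e_3$ accordingly. Choose these so that $|\det h|=1$ and $\omega^0_0+\omega^3_3=0$. The natural choice is the coframe $\omega^1=q^{1/4}dx$, $\omega^2=q^{-1/4}dy$, up to an overall factor. In this coframe $h=\sqrt q\,\omega^1\otimes\omega^1-\frac{1}{\sqrt q}\,\omega^2\otimes\omega^2$, with determinant $-1$. The scaling of $e_0$ is fixed by a first-order PDE $d\log\lambda=(\text{trace-part one-form})$. Only its derivative enters the connection entries, so we never need to solve it. Equation (\ref{eq-projective_metric_conjugate}) then follows immediately once $\mathcal F$ is known.

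\textbf{Step 3: the cubic form.} Substitute the transformed $\tilde\omega^j_i$ and $h_{ij}$ into (\ref{eq-cubic_components}). Expand $\Phi_{ijk}\omega^k$ in the coframe and read off the four components. The $q_x$ and $q_y$ terms come from $dh_{ij}$. The terms $4bq-2m$ and $(4a-2n)q$ come from the off-diagonal connection entries $\omega^1_2,\omega^2_1$ and from the $\lambda$-correction to the diagonal entries. Total symmetry, guaranteed by Cartan's lemma together with the Maurer--Cartan equations, cross-checks the computation: for example, $\Phi_{112}$ computed from the $(1,1)$ entry must agree with $\Phi_{121}$ computed from the $(1,2)$ entry. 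The expected pattern $\Phi_{122}=-q\Phi_{111}$, $\Phi_{222}=-q\Phi_{112}$ expresses apolarity (tracelessness) of $\Phi$ with respect to $h$, possibly up to the coframe convention. I will use this as a second check.

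\textbf{Step 4: the Fubini scalar.} With $h$ diagonal, contracting (\ref{eq-fubini_scalar}) gives
\[
\mathcal F=\sum_{i,j,k}\Phi_{ijk}^2\,h^{ii}h^{jj}h^{kk},
\]
counted with multiplicities $1,3,3,1$. Substituting (\ref{eq-cubic_components_conjugate}) collapses this to a multiple of $\sqrt q\,(qA^2-B^2)$. The normalizing constant $8/5$ then has to be matched to the paper's conventions for $\Phi$, consistent with (\ref{eq-invariants_asymptotic}).

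\textbf{Main obstacle.} I expect Step 3 together with the constant in Step 4 to be the hardest part. The exact bookkeeping of the gauge normalization, namely the choice of $\lambda$ and of the powers of $q$ in the coframe, determines the precise coefficients $4b$, $-2m$, $4a-2n$ and the factor $8/5$. Sign or index conventions in (\ref{eq-cubic_components}) could easily shift them. As a fallback, I would convert to asymptotic coordinates $u=\int q^{1/4}dx\pm\cdots$, bring the system to the canonical form (\ref{eq-canonical_system}), and compare with (\ref{eq-invariants_asymptotic}) to pin down the constants.
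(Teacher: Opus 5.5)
\textbf{Overall.} Your outline follows the paper's own one-line proof: build $\omega$ from $(z,z_x,z_y,z_{xx})$, normalize as in \S\ref{ss-rank-4_systems}, and read off $\Phi$. There is, however, a genuine gap in Step~2, which is exactly where $A$ and $B$ are produced.

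\textbf{Step 2 cannot be done with diagonal rescalings.} In the raw frame,
$\omega^0_0+\omega^3_3=-\bigl(b+\tfrac{q_x+m}{q}\bigr)dx-a\,dy$.
Diagonal rescalings change this one-form only by the exact form $d\log(\lambda\mu)$, so your equation ``$d\log\lambda=$ trace-part one-form'' is solvable only if it is closed. The $e_3$-component of the integrability conditions gives $\partial_y\bigl(b+\tfrac{q_x+m}{q}\bigr)=2a_x-b_y+n_x$. Closedness is therefore equivalent to $a_x-b_y+n_x=0$, which fails in general. For instance, with the $F_2$ coefficients of \S\ref{s-f_2_conjugate_and_laplace} this quantity equals $(2-\gamma_2)/(s-t)^2$.

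\textbf{The missing idea.} You need the unipotent part of $G$: $e_i\mapsto e_i+\nu_ie_0$ and $e_3\mapsto e_3+\kappa^ie_i$, together with unimodularity $\mathrm{tr}\,\omega=0$, which is what makes the normalized $\Phi$ apolar.
\begin{itemize}
\item Set $\theta_{ij}:=dh_{ij}-h_{kj}\omega^k_i-h_{ik}\omega^k_j+h_{ij}(\omega^0_0+\omega^3_3)$. Then $\theta_{ij}\wedge\omega^j=0$ in every frame.
\item A $\nu$-shift adds the pure-trace term $-(\nu_ih_{jk}+\nu_jh_{ik}+\nu_kh_{ij})\omega^k$, and a $\kappa$-shift adds a similar term.
\item Rescalings with the coframe fixed multiply $\theta$ by a common factor.
\end{itemize}
So the normalized $\Phi$ is a multiple of the $h$-trace-free part of the raw $\theta$, and you never have to solve for the gauge. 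In the raw frame, $\theta_{111}=-b-\tfrac{q_x+m}{q}$, $\theta_{112}=a$, $\theta_{122}=m-bq$, $\theta_{222}=(a-2n)q-q_y$. The traces are $t_1=-\tfrac{q_x+2m}{q}$ and $t_2=2n+\tfrac{q_y}{q}$. Subtracting $\tfrac14(h_{ij}t_k+h_{jk}t_i+h_{ki}t_j)$ gives $(\Phi_{111},\Phi_{112},\Phi_{122},\Phi_{222})=\tfrac1{4q}(-A,B,-qA,qB)$ in the coframe $(dx,dy)$, relative to $h$ in (\ref{eq-conjugate_second_fundamental_form}). The coefficients $4b$, $-2m$, $4a-2n$ come from removing the trace, not from off-diagonal entries plus a ``$\lambda$-correction''.

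\textbf{Your explicit normalization claims are also wrong.} With $\omega^1=q^{1/4}dx$, $\omega^2=q^{-1/4}dy$, the representative of $[h]$ with $|\det h|=1$ is $q^{-1}(\omega^1)^2-q(\omega^2)^2$. The form $\sqrt q(\omega^1)^2-q^{-1/2}(\omega^2)^2$ instead needs $\omega^2/\omega^1=q\,dy/dx$. Your apolarity check has the wrong sign: trace-freeness with respect to $h\propto(\omega^1)^2-q(\omega^2)^2$ forces $\Phi_{122}=+q\Phi_{111}$ and $\Phi_{222}=+q\Phi_{112}$, which is exactly what (\ref{eq-cubic_components_conjugate}) satisfies.

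\textbf{The three displayed formulas are not mutually consistent.} So trying to ``match conventions'' will not converge.
\begin{itemize}
\item By the computation above, the stated components hold in the coframe $\omega^1=\tfrac{\epsilon\,dx}{4q^{3/2}}$, $\omega^2=\tfrac{\epsilon\,dy}{4q^{3/2}}$ with $\epsilon=\pm1$. There $h=\epsilon\bigl(q^{-1/2}(\omega^1)^2-q^{1/2}(\omega^2)^2\bigr)$.
\item Then (\ref{eq-fubini_scalar}) gives $\mathcal F=4\epsilon\sqrt q\,(qA^2-B^2)$. Equivalently, in coframe-free form, $\varphi=\tfrac{qA^2-B^2}{4q^3}\,(dx\otimes dx-q\,dy\otimes dy)$.
\item This agrees with (\ref{eq-invariants_asymptotic}) computed in asymptotic coordinates. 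For example, take $z=(1,x,y,x^3+y^3)$, where $q=-y/x$, $A=y/x^2$, $B=1/x$.
\item By contrast, take the printed components with $h=\sqrt q(\omega^1)^2-q^{-1/2}(\omega^2)^2$ from (\ref{eq-projective_metric_conjugate}). Then (\ref{eq-fubini_scalar}) is not even a multiple of $qA^2-B^2$, so your Step~4 collapse would not occur.
\item The factor $8/5$ in (\ref{eq-fubini_scalar_conjugate}) cannot be obtained from the paper's definitions. The paper's proof, which only defers to Sasaki--Yoshida, does not address this.
\end{itemize}
What your method can actually establish is the corrected statement. Its zero locus $qA^2=B^2$, which is all Corollary~\ref{cor-quadric_conjugate} uses, is unchanged.
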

\begin{proof}
	After computing the connection form $\omega$ for the rank-4 linear system (\ref{eq-rank-4_system_conjugate}) from the moving frame equation $de = \omega e$ with respect to the frame $e = (z,z_x,z_y,z_{xx})$, the computation of the above quantities follows from the discussion in \S \ref{ss-rank-4_systems} and Sasaki \& Yoshida \cite[\S 4.2]{MR960834}, which also produces the coframe $\{\omega^1,\omega^2\}\subset T^*S$.
\end{proof}
\begin{corollary}
	\label{cor-quadric_conjugate}
	An immersed surface $S \subset \mathbb{P}^3$ equipped with conjugate coordinates $(x,y)$ and determined from the rank-4 linear system (\ref{eq-rank-4_system_conjugate}) is isomorphic to a quadric surface $S \cong Q$ if and only if $A = 0 = B$, where $A,B$ are given in (\ref{eq-AB_components}). The surface $S \cong R$ is a ruled surface if and only if $\mathcal{F} = 0$ and $A,B \neq 0$.
\end{corollary}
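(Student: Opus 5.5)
The corollary follows by combining Proposition \ref{prop-vanishing_invariants} (ii)--(iii) with the explicit formulas of Theorem \ref{thm-conjugate_invariants}. No new geometry is needed; the task is to translate the vanishing conditions on $\Phi$ and $\varphi$ into conditions on $A$ and $B$ in the coframe $\{\omega^1,\omega^2\}$.

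\emph{Quadric case.} By Proposition \ref{prop-vanishing_invariants} (ii), $S\cong Q$ if and only if $\Phi=0$. Since $\{\omega^1,\omega^2\}$ is a coframe, $\Phi=0$ exactly when all four components in (\ref{eq-cubic_components_conjugate}) vanish, namely $-A=0$, $B=0$, $-qA=0$ and $qB=0$. These hold if and only if $A=0=B$; the last two are consequences of the first two, whatever the value of $q$.

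\emph{Ruled case.} By Proposition \ref{prop-vanishing_invariants} (iii), $S\cong R$ if and only if $\varphi=0$ and $\Phi\neq 0$. Nondegeneracy of $h=dx\otimes dx-q\,dy\otimes dy$ forces $q\neq 0$. Hence $\sqrt{q}$ and $1/\sqrt{q}$ are nonzero (taking a complex branch, or using $q<0$ in the signature $(1,1)$ real case). By (\ref{eq-projective_metric_conjugate}), $\varphi=0$ is then equivalent to $\mathcal{F}=0$. By the quadric case, $\Phi\neq0$ is equivalent to $(A,B)\neq(0,0)$.

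\emph{The main subtlety.} The statement writes the condition as ``$A,B\neq0$'', which I read as $A$ and $B$ not both vanishing. I would check that under $\mathcal{F}=0$ the two readings agree. With $q\neq0$, formula (\ref{eq-fubini_scalar_conjugate}) gives $qA^2=B^2$, so $A=0$ if and only if $B=0$. Consequently ``not both zero'' is the same as ``both nonzero'', and the stated condition is exactly $\Phi\neq 0$ together with $\mathcal{F}=0$. This consistency check, together with justifying $q\neq0$ from nondegeneracy of $h$, is the only step that needs care; everything else is direct substitution.
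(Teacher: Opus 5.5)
Your proof is correct and follows the same route as the paper, which simply reads off the corollary from the component formulas for $\Phi$ and $\mathcal{F}$ in Theorem \ref{thm-conjugate_invariants} together with Proposition \ref{prop-vanishing_invariants} (ii), (iii). Your extra check is useful: when $\mathcal{F}=0$ and $q\neq 0$ one has $A=0\iff B=0$, so ``$A,B\neq 0$'' is unambiguous. One small slip: in the real signature-$(1,1)$ case, $h=dx\otimes dx-q\,dy\otimes dy$ forces $q>0$, not $q<0$ (with $q<0$ the relation $qA^2=B^2$ would give $A=B=0$ and leave no ruled surfaces), but your argument only uses $q\neq 0$, so nothing breaks.
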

\begin{proof}
	This follows from the expressions in Equations (\ref{eq-cubic_components_conjugate}) and (\ref{eq-fubini_scalar_conjugate}) and Proposition \ref{prop-vanishing_invariants} (\textit{ii}), (\textit{iii}).
\end{proof}
\subsubsection{The Laplace Transformation}
\label{sss-laplace_transformation}
Let $S \subset \mathbb{P}^3$ be an immersed surface corresponding to $z$ as in the previous section, equipped with conjugate coordinates $(x,y)$ and satisfying the rank-4 linear system appearing in (\ref{eq-rank-4_system_conjugate}). Consider the corresponding tangent congruence $\mathbf{L}$ consisting of tangent lines to the family $\mathcal{C}$ of coordinate lines on $S$, of which $S$ is naturally a focal surface. To find another immersed focal surface $S^\prime$ such that $\mathbf{L} = \{S,S^\prime\}$, first choose a point $w$ along the tangent line in the direction of $\partial_y$ as in (\ref{eq-R=0_consequence}). Due to homogeneous coordinates, $w$ can be written as 
\begin{equation*}
	w = z_y + \lambda z
\end{equation*}
for some function $\lambda$. Calculating $w_x$ and using the relation (\ref{eq-conjugate_eq1}), we have that 
\begin{equation*}
	w_x=(\lambda-a) z_x-b z_y+(\lambda_x-c) z \, ,
\end{equation*}
we find that (\ref{eq-P=0_consequence}) is satisfied precisely when $\lambda = a$. We have the following.
\begin{definition}
	\label{def-laplace_transformation}
	The quantity $w = z_y + az$ is called the \emph{first Laplace transform} of $z$, and is written as $w=z^+$. Similarly, the quantity $z^- = z_x + bz$ is called the \emph{minus first Laplace transform}. By convention, we will call $z^\pm$ respectively the \emph{positive} and \emph{negative} Laplace transforms of $z$. 
\end{definition}
\par An easy computation reveals the following connection to the Laplace invariants $\mathsf{h},\mathsf{k}$ defined in Equation (\ref{eq-laplace_invariants}).
\begin{proposition}
	\label{prop-laplace_composition}
	Composing the positive and negative Laplace transforms yields the following: $(z^+)^- = \mathsf{h} z$, $(z^-)^+ = \mathsf{k}z$. Hence if $\mathsf{h} \neq 0$, $(z^+)^-$ is projectively equivalent to $z$ and analogously for $\mathsf{k} \neq 0$. If $\mathsf{h} = \mathsf{k} \neq 0$, then the positive and negative Laplace transforms commute.
\end{proposition}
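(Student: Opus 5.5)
The plan is a direct computation using only the hyperbolic equation $z_{xy} + a z_x + b z_y + c z = 0$ from (\ref{eq-rank-4_system_conjugate}); the second equation of the system is never needed. Write $L(z) := z_{xy} + a z_x + b z_y + c z$. First compute $(z^+)^-$. By Definition \ref{def-laplace_transformation}, the negative transform of a function $w$ is $w_x + b w$, where $b$ is the coefficient of the equation satisfied by $w$.

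There is one subtlety to settle first. $w = z^+$ satisfies its own hyperbolic equation with transformed coefficients, so "the" negative transform of $w$ must be taken with respect to that equation. The cleaner route is to work operator-theoretically and verify the identity directly. From the computation just before Definition \ref{def-laplace_transformation}, with $\lambda = a$ we have
\[
w_x = z_{xy} + a z_x + a_x z = -b z_y + (a_x - c) z = -b\,(w - a z) + (a_x - c) z .
\]
Hence
\[
w_x + b w = (ab + a_x - c) z = \mathsf{h} z .
\]
This shows that the transform $w \mapsto w_x + b w$, applied to $z^+$, returns $\mathsf{h} z$.

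Next I would check consistency: this $b$ really is the correct coefficient for the equation satisfied by $w$. Differentiating $w_x + b w = \mathsf{h} z$ in $y$, and using $z_y = w - a z$, produces a hyperbolic equation for $w$ whose $w_x$-coefficient is $-(\log \mathsf{h})_y$ and whose $w_y$-coefficient is $b$. Its negative transform is therefore $w_x + b w$, as claimed. I expect this bookkeeping to be the only real obstacle, and it needs $\mathsf{h} \neq 0$, which matches the hypothesis in the statement.

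The symmetric computation, with the roles of $x$ and $y$ swapped, gives $(z^-)^+ = z_y$-type expression equal to $(ab + b_y - c) z = \mathsf{k} z$.

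The projective-equivalence claims then follow at once. When $\mathsf{h} \neq 0$, $(z^+)^- = \mathsf{h} z$ differs from $z$ by a nonvanishing factor. This is a projective gauge transformation as in (\ref{eq-transform1}), so $[\mathsf{h} z] = [z]$ in $\mathbb{P}^3$. The same argument applies when $\mathsf{k} \neq 0$.

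Finally, if $\mathsf{h} = \mathsf{k} \neq 0$, then $(z^+)^- = (z^-)^+$ by the two identities just established, so the two transforms commute.
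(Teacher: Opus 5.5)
Your proof is correct and is essentially the paper's own ``easy computation'': it rests on $w_x = -bw + \mathsf{h}z$ (the paper's (\ref{eq-diff_wx})) together with $b_1 = b$ from (\ref{eq-laplace_transform_components1}), and symmetrically on $u_y = -au + \mathsf{k}z$ with $a_0 = a$. One small slip, which does not affect the argument: the $w_x$-coefficient of the transformed equation is $a_1 = a - (\log \mathsf{h})_y$, not $-(\log \mathsf{h})_y$, but only $b_1 = b$ enters $w^-$.
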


\par If a Laplace invariant $\mathsf{h},\mathsf{k} = 0$ vanishes, one says that the corresponding Laplace transform is \emph{degenerate}. In this case the corresponding $z^\pm$ will end up defining a curve rather than a surface. Necessary and sufficient that this be the case is when the original surface is a tangent developable surface as in Definition \ref{def-developable}, see \cite[\S 4.2]{MR2216951}.
\par Clearly, the notion of Laplace transforms can be defined completely in reference to a single hyperbolic PDE as in (\ref{eq-hyperbolic_PDE}); indeed, there has been substantial inquiry into utilizing Laplace transforms in questions of integrability in this realm alone. However we may also think of the Laplace transform as a geometric transformation on an immersed surface. To study the resulting geometry, we pursue the following.
\subsection{Laplace transform of a rank-4 linear system}
\label{ss-laplace_transform_rank-4}
\par Starting from an immersed surface $S \subset \mathbb{P}^3$, to demand that a Laplace transform $z^\pm$ define a surface $S^\pm \subset \mathbb{P}^3$, it is necessary and sufficient that $z^\pm$ satisfy a rank-4 linear system of the general form in Definition \ref{def-conjugate_coords2}. Then we will have $\mathbf{L}^\pm = \{S,S^\pm\}$ with both surfaces focal, and the parameters $(x,y)$ will be simultaneously conjugate coordinates for both. In this case, we will refer to the resulting surface $S^\pm$ as the (positive / negative) Laplace transform of the original surface $S$. In the next section, we will derive the corresponding rank-4 linear systems of $S^\pm$ in relation to the original rank-4 linear system (\ref{eq-rank-4_system_conjugate}) for $S$.
\par We consider the case of $z^+ = z_y + az$, the negative Laplace transform $z^-$ is handled similarly and we will state the corresponding results after. To ease notation we will set $w = z^+$ and $u = z^-$. Utilizing the hyperbolic equation (\ref{eq-conjugate_eq1}), we find that 
\begin{equation}
	\label{eq-diff_wx}
	w_x = -bw + \mathsf{h}z \, .
\end{equation}
It follows from Proposition \ref{prop-laplace_composition} that it is useful to consider the case $\mathsf{h} \neq 0$. Differentiating again and utilizing both $w = z_y + az$ and (\ref{eq-diff_wx}), one finds the hyperbolic equation
\begin{equation}
	\label{eq-positive_laplace_transform1}
	w_{xy} + a_1 w_x + b_1 w_y + c_1 w = 0 \, ,
\end{equation}
where 
\begin{equation}
	\label{eq-laplace_transform_components1}
		a_1=a-(\log \mathsf{h})_y \, , \quad
		b_1=b \,, \quad
		c_1=c-a_x+b_y-b(\log \mathsf{h})_y \, .
\end{equation}
Computing the second equation in the coupled system (\ref{eq-rank-4_system_conjugate}) is a bit more involved but along the same lines; in this case one must also use the so-called \emph{prolonged} version of the hyperbolic equation (\ref{eq-conjugate_eq1})
\begin{equation}
	\label{eq-prolonged_hyperbolic}
z_{x x y}=\left(b c-c_x\right) z+\left(b a-a_x-c\right)z_x+\left(b^2-b_x\right)z_y-az_{x x} \, ,
\end{equation}
which amounts to differentiating (\ref{eq-conjugate_eq1}) and back substituting the original coupled system to expressing everything in terms of the frame $e = (z,z_x,z_y,z_{xx})$. Note that it is possible to express $z_{xxy}$ in terms of the frame $e$ since the linear system (\ref{eq-rank-4_system_conjugate}) is integrable of rank-4, and hence is said to be \emph{of finite type}. This ultimately allows one to eliminate all $z$ derivatives when finding the $w$-version of (\ref{eq-conjugate_eq2}). After some work, one finds the following. 
\begin{theorem}
	\label{thm-positive_laplace_system}
	Assume that $\mathsf{h} \neq 0$. Then the positive Laplace transform $w = z^+$ defines an immersed surface $S^+ \subset \mathbb{P}^3$ and satisfies a rank-4 linear system
	\begin{equation}
		\label{eq-positive_rank-4_system}
		\begin{cases}
			w_{xy}+a_1 w_x+b_1 w_y+c_1 w = 0 \, , \\
			w_{yy} + q_1w_{xx} + m_1 w_x +n_1 w_y + r_1 w = 0 \, , 
		\end{cases}
	\end{equation}
	where $a_1,b_1,c_1$ are given in Equation (\ref{eq-laplace_transform_components1}) and 
	\begin{equation}
		\label{eq-positive_q}
		q_1 = \frac{\left(-2 a_x+\mathsf{h}\right) q}{\mathsf{h}}+\frac{m_y}{\mathsf{h}}-\frac{mq_y}{q \mathsf{h}} \, .
	\end{equation}
	The remaining components $m_1,n_1,r_1$ are given in (\ref{eq-pos_components1}, \ref{eq-pos_components2}) in Appendix \ref{appendix}.
\end{theorem}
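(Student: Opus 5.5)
The plan is to express everything in the frame $e=(z,z_x,z_y,z_{xx})$, where the system (\ref{eq-rank-4_system_conjugate}) together with its prolongation (\ref{eq-prolonged_hyperbolic}) lets us reduce every derivative of $z$ to a combination of these four basis functions with coefficients in $C^\infty(U)$. We set $w=z_y+az$. By (\ref{eq-diff_wx}), $w_x=-bw+\mathsf{h}z$. Since $\mathsf{h}\neq 0$, this gives $z=\mathsf{h}^{-1}(w_x+bw)$, and then $z_y=w-az$.

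\textbf{Step 1.} First show that $(w,w_x,w_y,w_{xx})$ is again a frame. One computes $w_y=z_{yy}+a_yz+az_y$ and reduces $z_{yy}=-qz_{xx}-mz_x-nz_y-rz$. Likewise $w_{xx}$ follows by differentiating $w_x=-bw+\mathsf{h}z$, which gives $w_{xx}=-b_xw-bw_x+\mathsf{h}_xz+\mathsf{h}z_x$. Writing the change-of-frame matrix from $e$ to $(w,w_x,w_y,w_{xx})$, its determinant is, up to sign, $\mathsf{h}^2 q$ (times a unit). This is nonzero because $\mathsf{h}\neq0$ and $q\neq0$ by nondegeneracy of $h$ in (\ref{eq-conjugate_second_fundamental_form}). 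Hence the four components of $w$ span a rank-4 solution space and define an immersed surface $S^+$.

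\textbf{Step 2.} The hyperbolic equation (\ref{eq-positive_laplace_transform1}) with coefficients (\ref{eq-laplace_transform_components1}) comes from differentiating $w_x+bw=\mathsf{h}z$ in $y$. This gives $w_{xy}+b_yw+bw_y=\mathsf{h}_yz+\mathsf{h}z_y$. We substitute $z=\mathsf{h}^{-1}(w_x+bw)$ and $z_y=w-az$, divide through, and read off $a_1,b_1,c_1$. This is routine.

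\textbf{Step 3.} This is the main computation. We express $w_{yy}$ in the frame $e$. It involves $z_{yyy}$ and $z_{yy}$, which reduce via the second equation of (\ref{eq-rank-4_system_conjugate}) differentiated in $y$. That differentiation produces $z_{xxy}$, handled by (\ref{eq-prolonged_hyperbolic}), and $z_{xy}$, handled by the first equation. We then invert the Step 1 frame change to rewrite $w_{yy}$ as a linear combination of $w,w_x,w_y,w_{xx}$. The coefficient of $w_{xx}$ gives $-q_1$, and the others give $-m_1,-n_1,-r_1$.

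The $z_{xx}$-component is the only place $w_{xx}$ contributes, through its $\mathsf{h}z_x$ term. Only $w_{xx}$ contains $z_{xx}$ information after reduction, and $z_{xx}$ enters $w_{yy}$ through $-qz_{xxy}$, $(-q z_{xx})_y$ and $-mz_{xy}$. So $q_1$ is obtained by tracking the $z_x$-coefficient of $w_{yy}$ after these substitutions and matching it against $\mathsf{h}$. This produces the terms $(-2a_x+\mathsf{h})q/\mathsf{h}$ and $m_y/\mathsf{h}$, and the $-mq_y/(q\mathsf{h})$ term arises once the integrability conditions are used to simplify.

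The main obstacle is precisely this step: the integrability conditions $(z_{xy})_{yy}=(z_{yy})_{xy}$ must be used to eliminate dependent derivatives, such as $q_x$-type combinations, so that the result collapses to the stated closed form (\ref{eq-positive_q}). I would first derive these compatibility relations explicitly from $d\omega-\omega\wedge\omega=0$. I would then carry out the linear-algebra inversion symbolically, and verify the final form of $q_1$ and of $m_1,n_1,r_1$ as in the Appendix.
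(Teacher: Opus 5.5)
Your overall route is the paper's: rewrite everything in the frame $e=(z,z_x,z_y,z_{xx})$ using the hyperbolic equation, its $x$-prolongation (\ref{eq-prolonged_hyperbolic}) and the second equation, then eliminate. Steps~1 and~2 are correct. Your determinant $\pm q\mathsf{h}^2$ for the change of frame also justifies the rank-4/immersion claim more cleanly than the paper does.

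The problem is the bookkeeping in Step~3, which invents an obstacle. After reduction, $w_{xx}=(a_{xx}-c_x+bc)z+\mathsf{h}z_x+(b^2-b_x)z_y$ has \emph{no} $z_{xx}$-component. The only new frame vector with a $z_{xx}$-component is $w_y=-qz_{xx}-mz_x+(a-n)z_y+(a_y-r)z$, and it also carries $-mz_x$. Write $w_{yy}=\alpha_0z+\alpha_1z_x+\alpha_2z_y+\alpha_3z_{xx}$. Using $ab-a_x-c=\mathsf{h}-2a_x$, one finds $\alpha_3=nq-q_y$ and $\alpha_1=-(\mathsf{h}-2a_x)q-m_y+mn$. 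The $z_{xx}$-components give $n_1=\alpha_3/q=n-(\log q)_y$. The $z_x$-components give $\alpha_1+\mathsf{h}q_1-mn_1=0$, so
\[
q_1=\frac{mn_1-\alpha_1}{\mathsf{h}}=\frac{(\mathsf{h}-2a_x)q+m_y-mq_y/q}{\mathsf{h}},
\]
which is exactly (\ref{eq-positive_q}). Thus $-mq_y/(q\mathsf{h})$ is what survives of $mn_1/\mathsf{h}$ after its $mn$ part cancels against $\alpha_1$.

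If you match only the $z_x$-coefficient against $\mathsf{h}$, as you propose, you get $\bigl((\mathsf{h}-2a_x)q+m_y-mn\bigr)/\mathsf{h}$ instead. No integrability identity can turn $-mn$ into $-mq_y/q$, since that would force $mn_1=0$. So the step you flag as the main obstacle would fail as planned.

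In fact no integrability conditions enter at all. The prolonged equation (\ref{eq-prolonged_hyperbolic}) is just the $x$-derivative of the first equation with $z_{xy}$ substituted back. Once $q_1$ and $n_1$ are fixed, $m_1$ and $r_1$ follow from the $z$- and $z_y$-components, a linear $2\times 2$ system with determinant $-\mathsf{h}$. There is no $q_x$ anywhere to eliminate.

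If you compare with the Appendix, the printed formulas seem to contain slips. The $q$-term of $m_1$ should have $(2a_xb+a_{xx}-\mathsf{h}_x)\mathsf{h}$ in place of $(2a_xb+a_{xx}-\mathsf{h}_x)$. Also, $r_{11}$ and $r_{13}$ come out of the inversion with the opposite sign; you can check this on simple gauge or reparametrization examples. So a mismatch there is not evidence that integrability conditions are needed.
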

Notice in particular that Theorem \ref{thm-positive_laplace_system} allows for the computation of the conformal structure $[h^+]$ on the surface $S^+$ defined by $w = z^+$ as being generated by the diagonal tensor $h^+ = dx \otimes dx - q_1 dy \otimes dy,$
where $q_1$ is given in (\ref{eq-positive_q}). Hence we have 
\begin{equation}
	\label{eq-positive_conformal_structure}
	[h^+] = \left[\mathsf{h}q dx \otimes dx - (-2q^2a_x+qm_y-mq_y+q^2\mathsf{h})dy \otimes dy\right] \, .
\end{equation}
This allows for a simultaneous comparison of the conformal structures $[h]$ on $S$ and $[h^+]$ on $S^+$.
\begin{definition}
	\label{def-W_congruence}
	Suppose that $\mathbf{L} = \{S,S^\prime\}$ with both $S,S^\prime$ focal surfaces corresponding to $z$ and $w$, parameterized locally by the same conjugate coordinate system $(x,y)$. Suppose that the conformal structures $[h_z]$ and $[h_w]$ are generated by the diagonal tensors $h_z = \alpha dx \otimes dx + \beta dy \otimes dy$, $h_w = \alpha^\prime dx \otimes dx + \beta^\prime dy \otimes dy$,
	where $\alpha,\beta,\alpha^\prime,\beta^\prime$ are smooth functions. Then the line congruence $\mathbf{L}$ is said to be a \emph{Weingarten congruence}, or simply a \emph{$W$-congruence}, if the conformal structures $[h_z]$ and $[h_w]$ are equivalent. This happens if and only if the \emph{Weingarten invariant} $W$ vanishes,
	\begin{equation}
		\label{eq-weingarten_invariant}
		W := \alpha \beta^\prime - \beta \alpha^\prime = 0 \, .
	\end{equation}
\end{definition}
\par Applying this terminology in the case of the positive Laplace transform, we have the following.
\begin{corollary}
	\label{cor-positive_W}
	The positive line congruence $\mathbf{L}^+ = \{S,S^+\}$ derived from the positive Laplace transform is a $W$-congruence if and only if 
	\begin{equation}
		\label{eq-positive_weingarten}
		\scalemath{1}{W^+ = 2q^2a_x - qm_y + mq_y = 0 \, .}
	\end{equation}
\end{corollary}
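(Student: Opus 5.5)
The corollary follows by specializing Definition \ref{def-W_congruence} to the pair $z$, $w=z^+$, using the conformal structures already computed for both surfaces. By Theorem \ref{thm-positive_laplace_system}, both $z$ and $w$ satisfy rank-4 systems of the form (\ref{eq-rank-4_system_conjugate}) in the same coordinates $(x,y)$. Hence $(x,y)$ are simultaneously conjugate coordinates for $S$ and $S^+$, and $\mathbf{L}^+=\{S,S^+\}$ is a congruence with both surfaces focal, as Definition \ref{def-W_congruence} requires. We assume $\mathsf{h}\neq 0$ throughout, since otherwise $S^+$ degenerates to a curve.

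Next I would read off the diagonal representatives. By (\ref{eq-conjugate_second_fundamental_form}), $[h_z]$ is generated by $dx\otimes dx - q\,dy\otimes dy$, so $\alpha=1$ and $\beta=-q$. For $S^+$, take the representative in (\ref{eq-positive_conformal_structure}), obtained by multiplying $h^+=dx\otimes dx-q_1\,dy\otimes dy$ by the nonvanishing factor $q\mathsf{h}$ and substituting (\ref{eq-positive_q}). This gives
\[
\alpha'=\mathsf{h}q,\qquad \beta'=-\left(-2q^2a_x+qm_y-mq_y+q^2\mathsf{h}\right).
\]
The vanishing of $W$ in (\ref{eq-weingarten_invariant}) is unchanged when either representative is rescaled by a nonvanishing function, since $W$ just scales. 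So we may use these representatives freely.

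Computing $W=\alpha\beta'-\beta\alpha'$ gives
\[
W = 2q^2a_x - qm_y + mq_y - q^2\mathsf{h} + q^2\mathsf{h} = 2q^2a_x-qm_y+mq_y .
\]
This is $W^+$, and $\mathbf{L}^+$ is a $W$-congruence exactly when it vanishes.

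The only point needing care is justifying that the multiplication by $q\mathsf{h}$ is legitimate. Here $q\neq0$ because $h$ is nondegenerate, and $\mathsf{h}\neq0$ by assumption. It is also worth checking the sign convention, namely that $\beta=-q$ rather than $+q$. A sign error there would flip the cancellation of the $q^2\mathsf{h}$ terms, and that cancellation is exactly what makes $W^+$ independent of $\mathsf{h}$.
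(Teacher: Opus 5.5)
Your proposal is correct and is essentially the paper's argument: substitute the representatives (\ref{eq-conjugate_second_fundamental_form}) and (\ref{eq-positive_conformal_structure}) into the Weingarten invariant (\ref{eq-weingarten_invariant}), and the $q^2\mathsf{h}$ terms cancel. With exactly these representatives you get $W=W^+$. The paper records $W=qW^+$, which only reflects a different choice of representative, and the discrepancy is immaterial because $q\neq 0$.
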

\begin{proof}
	From the conformal structures $[h]$ generated by (\ref{eq-conjugate_second_fundamental_form}) and $[h^+]$ in (\ref{eq-positive_conformal_structure}), one finds the Weingarten invariant $W = qW^+$, where $W^+$ is given by (\ref{eq-positive_weingarten}) above.
\end{proof}
\par An analogous story holds for the negative Laplace transform $u = z^-$. In this case, one needs the prolonged equation for $z_{xxx}$, as in (\ref{eq-prolonged_hyperbolic}).
\begin{theorem}
	\label{thm-negative_laplace_system}
	Assume that $\mathsf{k} \neq 0$. Then the negative Laplace transform $u = z^-$ defines an immersed surface $S^- \subset \mathbb{P}^3$ that satisfies a rank-4 linear system
	\begin{equation}
		\label{eq-negative_rank-4_system}
		\scalemath{1}{\begin{cases}
			u_{xy}+a_0 u_x+b_0 u_y+c_0 u = 0 \, , \\
			u_{yy} + q_0u_{xx} + m_0 u_x +n_0 u_y + r_0 u = 0 \, , 
		\end{cases}}
	\end{equation}
	where $a_0,b_0,c_0$ are given by
	\begin{equation}
		\label{eq-laplace_transform_components2}
		\scalemath{1}{	a_0=a \, , \quad
			b_0=b - (\log \mathsf{k})_x \,, \quad
			c_0=c-b_y+a_x-a(\log \mathsf{k})_x} 
	\end{equation}
	and 
	\begin{equation}
		\label{eq-negative_q}
	\scalemath{1}{	q_0 = \frac{\mathsf{k}q}{\mathsf{k}+n_x-2b_y} \, .}
	\end{equation}
	The remaining components $m_0,n_0,r_0$ are given in (\ref{eq-neg_components1}, \ref{eq-neg_components2}) in Appendix \ref{appendix}.
\end{theorem}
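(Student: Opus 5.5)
The proof mirrors the derivation of Theorem \ref{thm-positive_laplace_system}, with the roles of $x$ and $y$ exchanged. The extra ingredient is the prolonged equation for $z_{xxx}$ in place of (\ref{eq-prolonged_hyperbolic}). Throughout I work in the frame $e=(z,z_x,z_y,z_{xx})$ of (\ref{eq-rank-4_system_conjugate}). I also assume $q\neq 0$, which holds since $h$ is nondegenerate by (\ref{eq-conjugate_second_fundamental_form}).

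\emph{Step 1: the hyperbolic equation for $u$.} Differentiate $u=z_x+bz$ in $y$ and use the first equation of (\ref{eq-rank-4_system_conjugate}). This gives
\[
u_y=-au+\mathsf{k}z,
\]
the analogue of (\ref{eq-diff_wx}). Since $\mathsf{k}\neq0$, we can solve $z=(u_y+au)/\mathsf{k}$, and then $z_x=u-bz$.

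Now differentiate $u_y=-au+\mathsf{k}z$ in $x$ and substitute both expressions. This yields
\[
u_{xy}+a\,u_x+\bigl(b-(\log\mathsf{k})_x\bigr)u_y+\bigl(a_x-\mathsf{k}-a(\log \mathsf{k})_x+ab\bigr)u=0 .
\]
Expanding $\mathsf{k}=ab+b_y-c$ turns the last coefficient into $c-b_y+a_x-a(\log\mathsf{k})_x$. This gives exactly (\ref{eq-laplace_transform_components2}).

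\emph{Step 2: the prolonged equation.} Differentiate the second equation of (\ref{eq-rank-4_system_conjugate}) in $x$. The term $z_{xyy}$ is removed by differentiating the hyperbolic equation in $y$ and substituting $z_{yy}$ from the second equation. Dividing by $q$ then expresses $z_{xxx}$ as a combination of $z,z_x,z_y,z_{xx}$. The coefficient of $z_y$ in that expression is what ultimately matters; I expect it to be proportional to $(2b_y-n_x-\mathsf{k})/q$ after simplification. The first two steps together show that the span of $(u,u_x,u_y)$ and the original frame are related by an invertible change of basis.

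\emph{Step 3: the second equation and its coefficients.} Compute $u_{xx}$ and $u_{yy}$ in the $z$-frame:
\[
u_{xx}=z_{xxx}+2b z_{xx}+\dots, \qquad u_{yy}=-a_yu-au_y+\mathsf{k}_yz+\mathsf{k}z_y .
\]
The four vectors $u,u_x,u_y,u_{xx}$ form a frame, with $z_{xx}$ recovered from $u_x=z_{xx}+bz_x+b_xz$. Hence there is a unique relation $u_{yy}+q_0u_{xx}\in\mathrm{span}(u,u_x,u_y)$.

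In the $z$-frame, the component along $z_y$ enters only through $\mathsf{k}z_y$ in $u_{yy}$ and through the $z_{xxx}$ term in $u_{xx}$, since $u,u_x,u_y$ contribute no independent $z_y$ direction once $z_{xx}$ is rewritten. Killing this component determines $q_0$. Using the $z_y$-coefficient from Step 2, I expect this to give $q_0=\mathsf{k}q/(\mathsf{k}+n_x-2b_y)$, as in (\ref{eq-negative_q}).

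The remaining coefficients $m_0,n_0,r_0$ are read off by rewriting the leftover $z,z_x,z_{xx}$ components in terms of $u,u_x,u_y$ via Step 1. That $u$ defines an immersed surface then follows because the resulting system has rank 4: its solution space is the image of the 4-dimensional solution space of (\ref{eq-rank-4_system_conjugate}) under the injective map $z\mapsto z^-$. The map is injective because $z$ is recovered from $u$ via Step 1.

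\emph{Main obstacle.} The main obstacle is Step 2 and the bookkeeping in Step 3. The integrability conditions of (\ref{eq-rank-4_system_conjugate}) must be used to simplify the $z_y$-coefficient of $z_{xxx}$ into the stated form involving $n_x-2b_y$. One must also track where the nondegeneracy hypothesis $\mathsf{k}+n_x-2b_y\neq0$ is implicitly needed.
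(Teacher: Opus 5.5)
Your proposal is correct and follows the route the paper has in mind: the paper only says the derivation is analogous to the positive case via the prolonged equation for $z_{xxx}$. No integrability conditions are actually needed, since direct substitution gives $q\,z_{xxx}=-(\mathsf{k}+n_x-2b_y)\,z_y+(\text{terms in } z,z_x,z_{xx})$, and the extra hypothesis $\mathsf{k}+n_x-2b_y\neq 0$ you flagged is genuinely required for $(u,u_x,u_y,u_{xx})$ to be a frame. Two bookkeeping points: first, $u_{xx}=z_{xxx}+b\,z_{xx}+2b_xz_x+b_{xx}z$ (not $2b\,z_{xx}$), and only with $b$ does the $z_{xx}$-balance give $m_0=\mathsf{k}(m+q_x)/(\mathsf{k}+n_x-2b_y)$; second, the $z_x$-balance gives $r_0=a\,n_0-(a^2-a_y)-\dots$, so the last term of $r_{01}$ in the appendix should carry an extra factor $a$, as the constant-coefficient case confirms (there $u$ satisfies the same system, so $r_0=r$).
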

\begin{corollary}
	\label{cor-negative_W}
	The negative line congruence $\mathbf{L}^- = \{S,S^-\}$ derived from the negative Laplace transform is a $W$-congruence if and only if 
	\begin{equation}
		\label{eq-negative_weingarten}
		\scalemath{1}{W^- = 2b_y-n_x = 0 \, .}
	\end{equation}
\end{corollary}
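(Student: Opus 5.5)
The plan mirrors the proof of Corollary \ref{cor-positive_W}. By Definition \ref{def-W_congruence}, we need diagonal representatives of the conformal classes $[h]$ on $S$ and $[h^-]$ on $S^-$ in the common conjugate coordinates $(x,y)$. Then we compute the Weingarten invariant $W = \alpha\beta^\prime - \beta\alpha^\prime$ and isolate the factor that can vanish.

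First, by Theorem \ref{thm-negative_laplace_system} (valid since $\mathsf{k}\neq 0$), $u = z^-$ satisfies a rank-4 system of the form (\ref{eq-rank-4_system_conjugate}). Its first equation is hyperbolic, so $(x,y)$ are conjugate coordinates on $S^-$ as well. By Definition \ref{def-conjugate_coords3} and (\ref{eq-conjugate_second_fundamental_form}), the conformal class $[h^-]$ is generated by $h^- = dx\otimes dx - q_0\, dy\otimes dy$, with $q_0$ given by (\ref{eq-negative_q}). Multiplying through by the nonvanishing denominator (assume $\mathsf{k}+n_x-2b_y\neq 0$, so that $q_0$ is defined) gives the representative
\begin{equation*}
[h^-] = \left[(\mathsf{k}+n_x-2b_y)\, dx\otimes dx - \mathsf{k}q\, dy\otimes dy\right] ,
\end{equation*}
in analogy with (\ref{eq-positive_conformal_structure}).

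Second, take $h = dx\otimes dx - q\,dy\otimes dy$ from (\ref{eq-conjugate_second_fundamental_form}), so $\alpha = 1$ and $\beta = -q$. With $\alpha^\prime = \mathsf{k}+n_x-2b_y$ and $\beta^\prime = -\mathsf{k}q$, we get
\begin{equation*}
W = \alpha\beta^\prime - \beta\alpha^\prime = -\mathsf{k}q + q(\mathsf{k}+n_x-2b_y) = -q\,(2b_y - n_x) = -q\,W^- .
\end{equation*}
Equivalently, one may work directly with $h^-$: then $W = -q_0+q = q(n_x-2b_y)/(\mathsf{k}+n_x-2b_y)$, which vanishes exactly when $W^-$ does.

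Finally, $q\neq 0$ because $h$ is nondegenerate, and the rescaling factors are nonzero. Hence $W=0$ if and only if $W^- = 2b_y - n_x = 0$, which proves the corollary.

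The only substantive input is the formula (\ref{eq-negative_q}) for $q_0$ from Theorem \ref{thm-negative_laplace_system}. That formula requires the lengthy elimination using the prolonged equation for $z_{xxx}$, and I take it as given. The one point needing care is the degenerate case $\mathsf{k}+n_x-2b_y = 0$, where $q_0$ is undefined. In that case I would argue from the unscaled representative: it has a vanishing $dx\otimes dx$ coefficient and so would be degenerate, which the standing nondegeneracy assumption on $h^-$ rules out. So the computation above covers all admissible cases.
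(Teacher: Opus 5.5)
Your proposal is correct and takes the same route as the paper. The paper simply observes from the formula (\ref{eq-negative_q}) for $q_0$ in Theorem \ref{thm-negative_laplace_system} that $[h^-]=[dx\otimes dx-q_0\,dy\otimes dy]$ agrees with $[h]=[dx\otimes dx-q\,dy\otimes dy]$ exactly when $2b_y-n_x=0$ (given $\mathsf{k}\neq 0$); your computation $W=-q\,W^-$, mirroring Corollary \ref{cor-positive_W}, and your remark on the denominator $\mathsf{k}+n_x-2b_y$ spell out what the paper calls obvious.
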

\begin{proof}
	It is obvious from Equation (\ref{eq-negative_q}) that so long as $\mathsf{k} \neq 0$, the vanishing of $W^-$ guarantees that $[h^-]$ is conformal to $[h]$.
\end{proof}
\begin{remark}
	\label{rmk-higher_laplace}
	Since the Laplace transforms of the hyperbolic equation (\ref{eq-conjugate_eq1}) are hyperbolic equations of the same form (provided that $\mathsf{h},\mathsf{k} \neq 0$), one may define the \emph{higher} Laplace invariants by computing the Laplace invariants of successive Laplace transforms, by defining $z^0 := z$, and $z^{n \pm 1} := (z^n)^\pm$ for $n \in \mathbb{Z}$. Hence $z^{\pm 1} = z^\pm$. Then the higher Laplace invariants $\mathsf{h}_n,\mathsf{k}_n$ are defined as the Laplace invariants in (\ref{eq-laplace_invariants}) of $z^n$, that is $\mathsf{h}_n=\mathsf{h}(z^n), \mathsf{k}_n=\mathsf{k}(z^n)$. The higher Laplace invariants satisfy a number of useful relations that allow one to compute them recursively. For example, we have
	\begin{equation}
		\label{eq-higher_laplace}
		\scalemath{1}{\mathsf{h}_{n+1}=2 \mathsf{h}_n-\mathsf{k}_n- \left(\log \mathsf{h}_n\right)_{xy} \, ,
		\quad \quad
		\mathsf{k}_{n+1}=\mathsf{h}_n}
	\end{equation}
	for all $n \in \mathbb{Z}$. See for example \cite[Prop. 4.10]{MR2216951} for proof and a number of other relations that $\mathsf{h}_n,\mathsf{k}_n$ satisfy. The higher Laplace invariants have played an important role in issues related to integrability of the hyperbolic equation (\ref{eq-conjugate_eq1}) (when higher invariants vanish), integrability of nonlinear PDE like the $\sinh$-Gordon equation (when the higher invariants are periodic), and determining subtle aspects of the geometry of projective surfaces that would otherwise be invisible without utilizing Laplace transforms and invariants. 
\end{remark} 
\begin{remark}
	\label{rmk-geometry_of_laplace}
	Provided that the higher Laplace invariants are nonvanishing, there is a corresponding sequence $\{S_n\}_{n \in \mathbb{Z}} \subset \mathbb{P}^3$ of immersed surfaces determined from the original surface $S_0 := S$. These Laplace transformed surfaces may have very different geometry from the original surface. Using the formulae (\ref{eq-cubic_components_conjugate}) and (\ref{eq-fubini_scalar_conjugate}) for the cubic form and projective metric, one may determine highly nonlinear differential constraints on the coefficients of the original conjugate system (\ref{eq-rank-4_system_conjugate}) defining a projective surface $S$ such that the surface $S_n$ defines either a ruled or quadric surface. As an example, in Appendix \ref{appendix}, we provide the differential constraints that are both necessary and sufficient for a line congruences $\mathbf{L}^+\pm = \{S,S^\pm\}$ to have both focal surfaces as quadrics. Historically such questions have been important, see e.g. \cite[\S 9.2]{MR2216951}.
\end{remark}
\par In the sequel, we apply the machinery of Laplace transforms and line congruences to well known systems of rank-4, those that are associated to Appell's bivariate hypergeometric functions $F_2$ and $F_4$.
\section{Appell's hypergeometric functions of rank-4}
\label{s-appell}
\par We begin by recalling the Gauss hypergeometric function $_2F_1$. Some nice references for this section and \S \ref{ss-integral_reps} are Kimura \cite{Kimura1973HypergeometricFO} and Yoshida \cite{yoshida1987}. The hypergeometric function $_2F_1$ is defined by the power series for $x \in \mathbb{C}$ 
\begin{equation}
	\label{eq-gauss_2F1}
	\scalemath{.9}{ _2F_1\left(\left.
	\genfrac{}{}{0pt}{}{\alpha ; \beta}{\gamma}
	\right|\,x\right) =
	\sum_{m=0}^{\infty} \frac{(\alpha)_m(\beta)_m}{(\gamma)_m m!} x^m}
\end{equation}
for $\alpha,\beta,\gamma \in \mathbb{C}$ and $(z)_m = z(z+1)\dots (z + m - 1)$ is the rising Pochhammer symbol. This quantity is so named because the series (\ref{eq-gauss_2F1}) is a generalization of the familiar geometric series. This series has been studied intensely since the time of Euler. Even today, hypergeometric functions are ubiquitous in many areas of mathematics and related fields, continuing to new find applications and relevance.  
\par Notice that the Pochhammer symbol may be written in terms of the Gamma function as $(z)_m =
\frac{\Gamma(z+m)}{\Gamma(z)}$ for $z \notin \mathbb{Z}_{\leq 0}$. Then
the series (\ref{eq-gauss_2F1}) may be written as
\begin{equation}
\scalemath{.9}{	\frac{\Gamma(\gamma)}{\Gamma(\alpha) \Gamma(\beta)} \sum_{m=0}^{\infty} \frac{\Gamma(\alpha+m) \Gamma(\beta+m)}{\Gamma(\gamma+m) \Gamma(1+m)} x^m}
\end{equation}
provided that $\alpha, \beta \notin \mathbb{Z}_{\leq 0}$. A simple argument using the ratio test shows that (\ref{eq-gauss_2F1}) is convergent in the disc $|x| < 1$ in $\mathbb{C}$.
\par There are other useful ways to represent $_2F_1$, such as being represented as an integral. It is well known that if $|x| < 1$ and $\mathrm{Re}(\beta) > \mathrm{Re}(\gamma) > 0$, then
\begin{equation}
	\label{eq-gauss_integral_rep}
	\scalemath{.9}{ _2F_1\left(\left.
	\genfrac{}{}{0pt}{}{\alpha ; \beta}{\gamma}
	\right|\,x\right) = 
	\frac{\Gamma(\gamma)}{\Gamma(\beta) \Gamma(\gamma-\beta)} \int_0^1 u^{\beta-1}(1-u)^{\gamma-\beta-1}(1-x u)^{-\alpha} d u \,,}
\end{equation}
where the contour of integration is chosen carefully to reflect the fact that the integrand consists of generically multivalued, singular functions of $u$. This formula, known to Euler, can be shown by using the generalized binomial expansion
\begin{equation}
	\label{eq-generalized_binomial}
	\scalemath{.9}{(1-x u)^{-\alpha} = \sum_{m=0}^{\infty} \frac{(\alpha)_m}{m!} u^m x^m}
\end{equation}
and then evaluating the resulting integral term by term to recover the $_2F_1$ series.
\par Another very important attribute of $_2F_1$ is that it constitutes the fundamental holomorphic solution at $x = 0$ of a linear, second order homogeneous ODE of the form
\begin{equation}
	\label{eq-hypergeometric_ODE}
\scalemath{1}{	x(1-x) \frac{d^2 y}{d x^2}+[\gamma-(\alpha+\beta+1) x] \frac{d y}{ d x}-\alpha \beta y = 0 \, .}
\end{equation}
This ODE is called the \emph{hypergeometric differential equation}, and is just as ubiquitous as $_2F_1$ itself. The equation is the canonical example of a second order Fuchsian\footnote{That is, the equation is linear, homogeneous, and has only regular singular points} equation with precisely three singular points. It was known to Riemann that any such second order Fuchsian ODE  is equivalent to the hypergeometric ODE (\ref{eq-hypergeometric_ODE}) for some  $\alpha,\beta,\gamma$ up to M\"obius transformation.
\par So successful was the use and application of these that mathematicians naturally sought out closely related expressions in two variables; in particular, ones that were not simply the Cauchy product of different copies of $_2F_1$ \cite[\S 7]{Kimura1973HypergeometricFO}. 
\subsection{Appell's bivariate hypergeometric functions $F_2$ and $F_4$}
\label{ss-integral_reps}
Paul Appell's $F_1,F_2,F_3,F_4$ hypergeometric functions generalize the single-variable Gauss hypergeometric function $_2F_1$ to two variables, each complete with a series definition and integral representation. Each of these functions is a solution to some Fuchsian system of linear PDEs. Specifically, $F_2$, $F_3$, and $F_4$ have rank-4 systems, hence the structure discussed in \S \ref{s-proj_DG}, \ref{s-line_congruences}. The function $F_1$ satisfies a Fuchsian system of rank-3, and thus cannot be discussed from the perspective of this article. It can additionally be shown that the systems for $F_2$ and $F_3$ coincide (although the series themselves are not equivalent), up to a change of parameters and variables \cite[p.~62]{yoshida1987}. It then suffices to study only the systems for $F_2$ and $F_4$.  We term the functions $F_2$ and $F_4$ Appell's hypergeometric functions of rank-4.

The functions $F_2,F_3,$ and $F_4$ are defined by the series
\begin{equation}
	\label{eq-appell_functions}
	\scalemath{1}{\begin{cases}
		F_2\left(\left.
		\genfrac{}{}{0pt}{}{\alpha ; \beta_1, \beta_2}{\gamma_1, \gamma_2}
		\right|\, x, y\right)
		= \sum_{m,n = 0}^\infty \frac{(\alpha)_{m+n}(\beta_1)_m(\beta_2)_n}{(\gamma_1)_m(\gamma_2)_n \,m!\,n!} x^m y^n \, ,\\
		F_3\left(\left.
		\genfrac{}{}{0pt}{}{\alpha _1, \alpha_2; \beta_1, \beta_2}{\gamma}
		\right|\, x, y\right)
		= \sum_{m,n = 0}^\infty \frac{(\alpha_1)_{m}(\alpha_2)_n(\beta_1)_m(\beta_2)_n}{(\gamma)_{m+n} \,m!\,n!} x^m y^n \, ,\\
		F_4\left(\left.
		\genfrac{}{}{0pt}{}{\alpha; \beta}{\gamma_1,\gamma_2}
		\right|\, x, y\right)
		= \sum_{m,n = 0}^\infty \frac{(\alpha)_{m+n}(\beta)_{m+n}}{(\gamma_1)_{m}(\gamma_2)_n \,m!\,n!} x^m y^n \, .
	\end{cases}}
\end{equation}

As with the discussion above for $_2F_1$, the parameters of the left side of the equations are all complex numbers such that the members of the top row are not negative integers. It can be shown that each of these series converge in some suitable neighborhood of the origin $(0,0) \in \mathbb{C}^2$. See Kimura \cite[\S 8]{Kimura1973HypergeometricFO}.

From those definitions, one can derive equivalent integral representations (as given in \cite[p.~71]{yoshida1987})
\begin{equation}
	\label{eq-appell_integral_rep}
	\scalemath{.7}{\begin{cases}
		F_2\left(\left.
		\genfrac{}{}{0pt}{}{\alpha ; \beta_1, \beta_2}{\gamma_1, \gamma_2}
		\right|\, x, y\right) = \frac{\Gamma\left(\gamma_1\right) \Gamma\left(\gamma_2\right)}{\Gamma\left(\beta_1\right) \Gamma\left(\beta_2\right) \Gamma\left(\gamma_1-\beta_1\right) \Gamma\left(\gamma_2-\beta_2\right)}\iint_{\Sigma_2}
		\left(1-x u-y v\right)^{-\alpha}
		(1-v)^{\gamma_2 -\beta_2 - 1 } (1-u)^{\gamma_1-\beta_1- 1}
		u^{\beta_1-1}
		v^{\beta_2-1}
		\, du \, dv \; , \\
		F_3\left(\left.
		\genfrac{}{}{0pt}{}{\alpha _1, \alpha_2; \beta_1, \beta_2}{\gamma}
		\right|\, x, y\right) = \frac{\Gamma(\gamma)}{\Gamma(\beta_1)\Gamma(\beta_2)\Gamma(\gamma - \beta_1 - \beta_2)} \iint_{\Sigma_3}
		(1 - u - v)^{\gamma - \beta_1 - \beta_2 - 1}
		(1 - xu)^{-\alpha_1}	(1-yv)^{-\alpha_2}
		u^{\beta_1 - 1}
		v^{\beta_2 - 1}
		\,du\,dv \; , \\
		F_4\left(\left.
		\genfrac{}{}{0pt}{}{\alpha; \beta}{\gamma_1,\gamma_2}
		\right|\, x, y\right) = \frac{(-1)^{\gamma_1-\alpha} \Gamma(\gamma_1) \Gamma\left(\gamma_2\right)}{\Gamma(\alpha-\gamma_1) \Gamma(1-\alpha+\gamma_1) \Gamma\left(\gamma_1+\gamma_2-\alpha -1\right)} \iint_{\Sigma_4}
		(1- xu -yv)^{-\beta}
		(u+v-uv)^{\gamma_1+\gamma_2-\alpha-2}u^{\alpha-\gamma_2} 
		v^{\alpha-\gamma_1}
		\, du\,dv \; ,
	\end{cases}}
\end{equation}
where the regions of integration for each are 
\begin{equation*}
	\scalemath{.8}{\begin{cases}
		\Sigma_2 = \left\{(u,v) \in \mathbb{R}^2 \, \vert \, 0 \leq u,v \leq 1  \right\} \, , \\
		\Sigma_3 = \left\{(u, v) \in \mathbb{R}^2 \, \vert \, u, v, 1-u-v \geq 0 \, \right\} \, , \\
		\Sigma_4 = \left\{(u,v) \in \mathbb{R}^2 \, \vert \, 0 \leq u \leq 1 \, , \, v \leq 0 \, ,\, u+v-uv \geq 0 \, \right\} \, 
	\end{cases}}
\end{equation*}
and have been carefully chosen to reflect the multivalued nature of the integrands.
\subsection{Deriving hypergeometric systems from the GKZ perspective}
\label{ss-GKZ}
The GKZ\footnote{Named after mathematicians Gel'fand, Kapranov, \& Zelevinsky \cite{MR1080980}.} framework for generalized Euler integrals facilitates the derivation of a system of linear differential equations whose solutions include the original integral, in a similar spirit on how the integral representation for $_2F_1$ satisfies the hypergeometric ODE (\ref{eq-hypergeometric_ODE}). Their work \cite{MR1080980} on so-called $\mathcal{A}$-hypergeometric functions yields a massive generalization of the discussions above, and allows us to compute the Fuchsian systems for $F_2$ and $F_4$ purely from the data of the integral representations in \ref{eq-appell_integral_rep}. We define the generalized Euler integral and concisely describe the process of deriving the $\mathcal{A}$-hypergeometric system, using $F_2$ as an example and treating $F_4$ as a result. Our treatment of this process is brief and has many assumptions and assertions. A more rigorous treatment is available in \cite{MR1080980}.
\par Consider an integral of the form
\begin{equation}
	\label{eq-generalized_euler_integral}
	\scalemath{1}{\oint_\Sigma \prod_{i=1}^m P_i(x_1,\dots,x_k)^{\alpha_i}\,x_1^{\beta_1}\cdots x_k^{\beta_k}\,dx_1\cdots dx_k}
\end{equation}
where $P_i(x_1,\dots,x_k) = \sum v_{\omega} x_1^{\omega_{1}} \cdots x_k^{\omega_{k}}$ is a Laurent polynomial in the variables $x_1,\dots,x_k$ and $\omega_{j} \in \mathbb{Z}$ are (fixed) powers. Here we have written the index $\omega = (\omega_1,\dots,\omega_k) \in \mathbb{Z}^k$ on the coefficients $v_{\omega}$, and the powers $\alpha_i, \beta_i\in \mathbb{C}$ of the $P_i$ are complex. To each Laurent polynomial $P_i$ we associate a finite subset $\mathcal{A}_i \subset \mathbb{Z}^k$ that records the powers of each term. The set $\mathbb{C}^{\mathcal{A}_i}$ denotes the vector space of Laurent polynomials with powers determined by $\mathcal{A}_i$, so that each $P_i \in \mathbb{C}^{\mathcal{A}_i}$.
\par This quantity is called a ``generalized Euler integral''. As above, we want to consider (\ref{eq-generalized_euler_integral}) as an analytic function on $\prod \mathbb{C}^{\mathcal{A}_i}$, i.e., as an analytic function of the coefficients $v_\omega$. Since the integrand in is generically singular and multivalued, care must be taken to ensure that the integral is well defined, by choosing a suitable $k$-dimensional cycle $\Sigma \subset \mathbb{C}^k$ over which to integrate. In \cite[\S 2.2]{MR1080980}, it was shown how to pick such a $\Sigma$. Namely, they showed that there is a certain chain complex defined on the complement $(\mathbb{C}^*)^k -  \bigcup_{i=1}^m\{P_i=0\}$ in which the integral (\ref{eq-generalized_euler_integral}) depends only on the homology class of $\Sigma$. This means that we may think of the generalized Euler integral as a generically multivalued, analytic function of the coefficients $v_\omega$. 
\par Write $\alpha = (\alpha_1,\dots,\alpha_m) \in \mathbb{C}^m$ and $\beta = (\beta_1,\dots,\beta_k) \in \mathbb{C}^k$. Then we may write $\Phi(v) = \Phi(\alpha;\beta \, \vert \, v_{\omega})$ for the multivalued analytic function defined by (\ref{eq-generalized_euler_integral}). We are interested in finding a minimal set of linear differential operators in the variables $v_\omega$ that annihilate $\Phi$. To each $P_i$, represent the set $\mathcal{A}_i$ of powers of its monomials by the $(k \times j_i)$ integer matrix $\mathcal{A}_i = \left(\omega_{pq}\right)$ where $p = 1,\dots,k$ and $q=1,\dots,j_i = |\mathcal{A}_i|$. In accordance with the ``Cayley trick'' \cite[\S 2.5]{MR1080980}, we augment the columns of the matrix $\mathcal{A}_i$ with copies of the standard basis vector $e_i$ of the lattice $\mathbb{Z}^m$, defining a new matrix $a_i = e_i \times \mathcal{A}_i$ of size $(m+k \times j_i)$. It is convenient to collect the data for all the polynomials into the matrix $\mathcal{A} = \left(a_{1} \; \cdots \; a_{m}\right)$ of size $(m + k \times j_1 + \cdots + j_m)$. Set $n= m+k$ and $N = j_1 + \cdots + j_m$. We assume as in \cite[\S 1.1]{MR1080980} that $\mathcal{A} \subset \mathbb{Z}^n$ generates the lattice as an abelian group. Further, we assume that $\mathcal{A}$ is contained in an integral hyperplane in $\mathbb{Z}^n$; that is to say that there is a $\mathbb{Z}$-linear map $H : \mathbb{Z}^n \to \mathbb{Z}$ such that $H(\mathcal{A})=1$. From the context of the Cayley trick, the map $H$ is simply the induced map on $\mathbb{Z}^n$ by adding the entries of the first $m$ rows of the columns of $\mathcal{A}$.
\par The lattice of relations for our system is given as $\mathbb L = \operatorname{null}(\mathcal{A}) \cap \mathbb{Z}^N$ by definition. We think of elements $\ell \in \mathbb{L} \subset (\mathbb{Z}^N)^*$ as row vectors, and take $\mathbb{L} = \{\ell^1,\dots,\ell^j\}$ as a generating set, where $j = N - n$. For any $\ell = (\ell_1,\dots,\ell_n) \in \mathbb{L}$, the assumption on the hyperplane $H$ ensures that $\sum_{k=1}^{N} \ell_k = 0$. These facts allow us to derive the desired differential equations. Consider $v=(v_1,\dots,v_{N})\in \mathbb{C}^N$ and put $\theta_i = v_i \frac{\partial}{\partial v_i}$. For $\ell^i = (\ell^i_1  \dots, \ell^i_N)$, define 
\begin{equation}
	\label{eq-box_operator}
	\scalemath{1}{\square_i = 
	\prod_{\ell^i \,: \,\ell^i_j >0}\left(\frac{\partial}{\partial v_i}\right)^{\ell^i_j}-\prod_{\ell^i \, : \, \ell^i_j <0}\left(\frac{\partial}{\partial v_i}\right)^{-\ell^i_j} \; .}
\end{equation}
By the considerations above, each $\square_i$ is a homogeneous differential operator. Put $\Theta = (\theta_1,\dots,\theta_N)$, and define $\vec{\gamma} = (\alpha_1,\dots,\alpha_m, -\beta_1-1,\dots,-\beta_k-1) \in \mathbb{C}^n$ from the data of the generalized Euler integral (\ref{eq-generalized_euler_integral}).
\par The so-called $\mathcal{A}$-hypergeometric system \cite[Def. 1.2, Thm. 2.7]{MR1080980} with parameters $\gamma$ is then the system of differential equations that has the following effect on the function $\Phi(v)$:
\begin{equation}
	\label{eq-A_hypergeometric_system}
	\scalemath{1}{\square_i\Phi(v) = 0\, , \quad i = 1,\dots,j \, ,\quad \quad \mathcal{A}\Theta\Phi(v) = \gamma \Phi(v).}
\end{equation}

The meaning of the equations (\ref{eq-A_hypergeometric_system}) in the context of the generalized Euler integral $\Phi(v)$ is as follows. Such systems were defined generally in \cite{MR0902936}, where they were shown to be holonomic, so the number of linearly independent solutions at a generic point is finite. Direct calculation shows that (\ref{eq-generalized_euler_integral}) is annihilated by the $\square_i$ operators. Regarding $Z_k$, $k=1,\dots,n$, as the $k$th row of the vector $\mathcal{A}\Theta$, the  equation $Z_k\Phi(v) = \gamma_k \Phi(v)$ in (\ref{eq-A_hypergeometric_system}) is a quasi-homogeneity condition coming from the freedom to rescale the integrand (\ref{eq-generalized_euler_integral}) by the torus $(\mathbb{C}^*)^n = (\mathbb{C}^*)^k \times (\mathbb{C}^*)^m$ in the natural way on the variables $x_1,\dots,x_k$ and the polynomials $P_1,\dots,P_m$, respectively. Ultimately quotienting by the torus action allows us to use the nonhomogeneous relations from the $Z_k$ operators together with the homogeneous $\square_i$ relations to reduce the full system (\ref{eq-A_hypergeometric_system}) to a suitable homogeneous subsystem in a smaller number of variables. 
\subsubsection{The $F_2$ hypergeometric system}
As an example, we calculate the hypergeometric differential equations for $F_2$ using the GKZ perspective. These equations are well-known (see for example \cite[\S 6.2]{yoshida1987}), but their derivation from the GKZ perspective applied to their integral representations above does not appear in the literature to the best of our knowledge. 
Recall the integral representation of $F_2$ from (\ref{eq-appell_integral_rep}).
Our polynomials and their corresponding $\mathcal{A}_i$ matrices are (note that we are fixing an order on the variables ~$(u,v)$)
\begin{equation*}
	\scalemath{.9}{1-u + 0u^0v^0  \leftrightarrow
	\begin{pmatrix}
		0 & 1 & 0\\
		0 & 0 & 0
	\end{pmatrix} , \quad
	1-v + 0u^0v^0 \leftrightarrow
	\begin{pmatrix}
		0 & 0 & 0\\
		0 & 1 & 0
	\end{pmatrix} ,\quad
	1-x u - y v  \leftrightarrow
	\begin{pmatrix}
		0 & 1 & 0\\
		0 & 0 & 1
	\end{pmatrix} .}
\end{equation*}
After applying the Cayley trick and combining, we get
\begin{equation*}
\scalemath{.7}{	\mathcal{A} =
	\begin{pmatrix}
		1 & 1 & 1 & 0 & 0 & 0 & 0 & 0 & 0 \\
		0 & 0 & 0 & 1 & 1 & 1 & 0 & 0 & 0 \\
		0 & 0 & 0 & 0 & 0 & 0 & 1 & 1 & 1 \\
		0 & 1 & 0 & 0 & 0 & 0 & 0 & 1 & 0 \\
		0 & 0 & 0 & 0 & 0 & 1 & 0 & 0 & 1
	\end{pmatrix} \, ,  \quad \mathbb L =
	\begin{pmatrix}
		0 & 0 & 0 & 1 & 0 & -1 & -1 & 0 & 1 \\
		1 & -1 & 0 & 0 & 0 & 0 & -1 & 1 & 0 \\
		0 & 0 & 0 & -1 & 1 & 0 & 0 & 0 & 0 \\
		-1 & 0 & 1 & 0 & 0 & 0 & 0 & 0 & 0
	\end{pmatrix}.}
\end{equation*}
Moreover we get the vector $	\vec{\gamma} = (\gamma_1 - \beta_1 -1, \gamma_2 - \beta_2 -1,-\alpha,-\beta_1,-\beta_2)  \in \mathbb{C}^5$. Thus the full GKZ system in (\ref{eq-A_hypergeometric_system}) is (suppressing $\Phi$)
\begin{equation}
	\label{eq-F2_GKZ_system}
	\scalemath{.75}{\begin{cases}
		\theta_1 + \theta_2 + \theta_3 = \gamma_1 - \beta_1 -1 , \\
		\theta_4 + \theta_5 + \theta_6 = \gamma_2 - \beta_2 -1 , \\
		\theta_7 + \theta_8 + \theta_9 = -\alpha , \\
		\theta_2 + \theta_8 = -\beta_1, \\
		\theta_6 + \theta_9 = -\beta_2,
	\end{cases}}
	\quad
	\scalemath{.9}{\begin{cases}
		\frac{\theta_4}{v_4} = \frac{\theta_5}{v_5} , \\
		\frac{\theta_1}{v_1} = \frac{\theta_3}{v_3},\\
		\frac{\theta_4\theta_9}{v_4v_9} = \frac{\theta_6\theta_7}{v_6v_7},\\
		 \frac{\theta_1\theta_8}{v_1v_8} = \frac{\theta_2\theta_7}{v_2v_7} .
	\end{cases}}
\end{equation}
Products in the second order equations should be understood as composition of the corresponding operators, and we have rewritten the $\square_i$ equations in terms of the logarithmic derivative operators $\theta_i$, using the fact that the $\theta$ operators\footnote{Notice that in general one will encounter terms like $(\pd{v_i})^2$ in the $\square$ equations, whence one must use $(\pd{v_i})^2 = \frac{1}{v_i^2}(\theta_i^2-\theta_i)$, etc.} commute $\theta_i\theta_j = \theta_j\theta_i$. We then introduce a change of variables to eventually reduce the system by dividing by the torus action. This will give us two variables that will end up being $(x,y)$ in the definition of $F_2$. 

From the second order equations above, set $x = \frac{v_1v_8}{v_2v_7}$ and $y = \frac{v_4v_9}{v_6v_7}$. This choice of variables comes precisely from the ``second order'' lattice vectors $\ell^1,\ell^2 \in \mathbb{L}$, where the entries $\ell^i_j$ determine $x$ and $y$ as $x = \prod_{j=1}^{9} v_j^{\ell^1_j}$, $y = \prod_{j=1}^{9} v_j^{\ell^2_j}$. Let $\theta_x = x \pd{x}$ and $\theta_y = y \pd{y}$. Then an easy computation shows that $\theta_8 = \theta_x$ and $\theta_9 = \theta_y$. Hence the second order equations above can be written as $\theta_1\theta_x - x\theta_2\theta_7 = 0,$ and $\theta_4\theta_y - y\theta_6\theta_7 = 0$.

Using the first order $\square_i$ equations as well as the remaining equations from the $Z_k$ operators, all other derivatives can be expressed in terms of $\theta_x$ and $\theta_y$ alone, after we set $v_1=1,v_3=0,v_4=1,v_5=0$. This is permissible because of the relevant torus action. We find the following:
\begin{equation*}
	\scalemath{.85}{\begin{cases}
		\theta_1 = \theta_x+\gamma_1-1 \,,\\
		\theta_2 = -\theta_x-\beta_1 \,,\\
		\theta_4 = \theta_y+\gamma_2-1 \,, 
	\end{cases}}
	\quad \quad
	\scalemath{.85}{\begin{cases}
	\theta_6 = -\theta_y-\beta_2  \,,\\
	\theta_7 = -\theta_x-\theta_y-\alpha \, .
	\end{cases}}
\end{equation*}
Substituting these expressions into the second order equations with $\theta_x$ and $\theta_y$, we find
\begin{equation*}
	\scalemath{1}{\begin{cases}
		(1-x)\theta_x^2-x\theta_x\theta_y+(\gamma_1-1-(\alpha+\beta_1)x)\theta_x -x\beta_1\theta_y - x\alpha\beta_1 =0 \, , \\ 
		(1-y)\theta_y^2-y\theta_x\theta_y+(\gamma_2-1-(\alpha+\beta_2)y)\theta_y -y\beta_2\theta_x - y\alpha\beta_2 =0 \, . 
	\end{cases}}
\end{equation*}
Expanding these out, we arrive at
\begin{equation}
	\label{eq-F2_hypergeometric_eqns}
\scalemath{1}{	\begin{cases}
		x\left(1-x\right) \frac{\partial^2 F}{\partial x^2}-x y \frac{\partial^2 F}{\partial x \partial y}+\left(\gamma_1-\left(\alpha+\beta_1+1\right) x\right) \frac{\partial F}{\partial x}-\beta_1 y \frac{\partial F}{\partial y}-\alpha \beta_1 F=0 \, , \\
		y\left(1-y\right) \frac{\partial^2 F}{\partial y^2}-x y \frac{\partial^2 F}{\partial x \partial y}+\left(\gamma_2-\left(\alpha+\beta_2+1\right) y\right) \frac{\partial F}{\partial y}-\beta_2 x \frac{\partial F}{\partial x}-\alpha \beta_2 F=0 \, .
	\end{cases}}
\end{equation}
Then (\ref{eq-F2_hypergeometric_eqns}) is precisely the well-known Fuchsian system annihilating $F_2$. Hence we identify
\begin{equation*}
	\scalemath{1}{F_2\left(\left.
	\genfrac{}{}{0pt}{}{\alpha ; \beta_1, \beta_2}{\gamma_1, \gamma_2}
	\right|\, x, y\right) \overset{\cdot}{=} \Phi\left(\vec{\gamma} \, \Big\vert \, 1,1,0,1,0,1,1,x=\frac{v_1v_8}{v_2v_7},y = \frac{v_4v_9}{v_6v_7}\right) \, ,}
\end{equation*}
where $\overset{\cdot}{=}$ signifies equality up to a nonvanishing constant.

In a completely analogous way, one recovers the well-known Fuchsian system that annihilates $F_4$:
\begin{equation}
	\label{eq-F4_hypergeometric_eqns}
	\scalemath{.8}{\begin{cases}
		x(x+y-1)\frac{\partial^2F}{\partial x^2} +2 xy\frac{\partial^2F}{\partial y \partial x} +\left((\alpha +\beta +1) x+\gamma_1( y-1)\right)\frac{\partial F}{\partial x}+y\left(\alpha+\beta-\gamma_2+1\right)\frac{\partial F}{\partial y}+\alpha \beta F = 0 \, ,\\
		y(x+y-1)\frac{\partial^2 F}{\partial y^2} + 2x y\frac{\partial^2 F}{\partial y \partial x}+x\left(\alpha+\beta-\gamma_1+1\right)\frac{\partial F}{\partial x} +\left((\alpha +\beta +1)y+\gamma_2 (x-1)\right)\frac{\partial F}{\partial y} +\alpha \beta F = 0 \, .
	\end{cases}}
\end{equation}
Both (\ref{eq-F2_hypergeometric_eqns}) and (\ref{eq-F4_hypergeometric_eqns}) are linear systems of the form (\ref{eq-rank_4_system}) studied in \S \ref{ss-rank-4_systems}. The following statement is well-known, but crucial. 

\begin{proposition}
	\label{prop-F2_F4_integrable_systems}
	The systems (\ref{eq-F2_hypergeometric_eqns}) and (\ref{eq-F4_hypergeometric_eqns}) annihilating Appell's $F_2$ and $F_4$ functions each satisfy the Maurer-Cartan equation $d\omega - \omega \wedge \omega = 0$. Hence the differential systems are of rank-4, and are thus integrable systems of finite type.
\end{proposition}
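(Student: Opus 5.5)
Our approach is to reduce both systems to the normal form (\ref{eq-rank_4_system}) and check that the prolongations close up. The plan is to show that the $\mathbb{C}$-span of $\{z, z_x, z_y, z_{xy}\}$ is closed under $\partial_x$ and $\partial_y$ modulo the equations. Once this holds, the compatibility condition $(z_{xx})_{yy} = (z_{yy})_{xx}$ imposes no new relation, which is equivalent to $d\omega - \omega\wedge\omega = 0$ as noted in \S\ref{ss-rank-4_systems}.

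\textbf{Step 1: normal form.} For $F_2$, we divide the first equation of (\ref{eq-F2_hypergeometric_eqns}) by $x(1-x)$ and the second by $y(1-y)$. This gives $z_{xx} = \ell z_{xy} + a z_x + b z_y + p z$ and $z_{yy} = m z_{xy} + c z_x + f z_y + q z$, with rational coefficients such as $\ell = y/(1-x)$ and $p = \alpha\beta_1/(x(1-x))$. This is valid away from the singular locus $xy(1-x)(1-y)=0$.

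For $F_4$, the two second-order equations involve $z_{xx}$, $z_{xy}$, $z_{yy}$. We solve them for $z_{xx}$ and $z_{yy}$ in terms of $z_{xy}$ and lower order terms. This is possible because the coefficient matrix $\mathrm{diag}(x(x+y-1),\, y(x+y-1))$ is invertible off $xy(x+y-1)=0$.

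\textbf{Step 2: closure of the span.} With $e = (z, z_x, z_y, z_{xy})$, the first three rows of $\omega$ are read off directly, as in (\ref{eq-rank_4_connection_form}). The fourth row requires expressing $z_{xxy}$ and $z_{xyy}$ in the frame. Differentiating the first equation in $y$ gives $z_{xxy} = \ell z_{xyy} + \cdots$, and differentiating the second in $x$ gives $z_{xyy} = m z_{xxy} + \cdots$. Here the dots are combinations of $z_{xx}, z_{yy}, z_{xy}, z_x, z_y, z$, which reduce to the frame by Step 1. This is a $2\times 2$ linear system for $(z_{xxy}, z_{xyy})$, solvable when $1 - \ell m \neq 0$, that is, when $h$ in (\ref{eq-second_fundamental_form}) is nondegenerate. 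This determines the fourth row $*$ of $\omega$.

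\textbf{Step 3: the Maurer--Cartan check.} We must verify $d\omega = \omega\wedge\omega$. Since $\omega = \omega_x\,dx + \omega_y\,dy$, this is the single matrix identity $\partial_x \omega_y - \partial_y \omega_x + [\omega_y, \omega_x] = 0$ (with sign convention matching $de = \omega e$). The first three rows of this identity hold by construction, since they encode $z_{xy} = z_{yx}$ and the definitions. The substantive content lies in the fourth row, which amounts to the condition that $\partial_y(z_{xxy}) = \partial_x(z_{xyy})$ when both are expanded in the frame. This is a finite rational-function identity in $x, y$ and the parameters. I expect it to be the main obstacle: it is not conceptually difficult, but it is heavy bookkeeping, especially for $F_4$ after the inversion in Step 1. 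I would verify it symbolically.

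As an independent confirmation of rank exactly 4, we exhibit four linearly independent local solutions. For $F_2$ these are $F_2$ itself, $x^{1-\gamma_1}F_2(\ldots)$, $y^{1-\gamma_2}F_2(\ldots)$, and $x^{1-\gamma_1}y^{1-\gamma_2}F_2(\ldots)$, with shifted parameters; for $F_4$ there are analogous solutions with the factors $x^{1-\gamma_1}$, $y^{1-\gamma_2}$. For generic parameters their leading exponents are distinct, so they are independent. Consequently the solution space has dimension $4 = \operatorname{rank} E$, and a flat connection is forced.

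Alternatively, and more conceptually, Step 3 can be replaced by this solution count combined with holonomicity of the GKZ system from \S\ref{ss-GKZ}. Four independent solutions $z_0, \dots, z_3$ give a fundamental matrix $Z = (e(z_0), \ldots, e(z_3))$, which is invertible by Step 2 and independence. Then $\omega = dZ\,Z^{-1}$, and differentiating gives $d\omega - \omega\wedge\omega = 0$ automatically. I would present this argument, with the symbolic check as a confirmation. The result extends to non-generic parameters by analytic continuation of the polynomial identity in the parameters.
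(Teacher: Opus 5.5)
Your proposal is correct, and since the paper gives no proof at all (it simply calls the statement well known), you are supplying one rather than paraphrasing one. The only justification implicit in the paper is the remark in \S\ref{ss-rank-4_systems} that the Maurer--Cartan equation is equivalent to $(z_{xx})_{yy}=(z_{yy})_{xx}$, together with the GKZ origin of the systems. That points toward a direct verification of the rational identity you isolate in Step 3.

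Your main argument takes a different route: it derives flatness from the existence of four independent local solutions. The classical series at the origin, with exponents $(0,0),(1-\gamma_1,0),(0,1-\gamma_2),(1-\gamma_1,1-\gamma_2)$, give a matrix $Z$ with $dZ=\omega Z$. Hence $\omega=dZ\,Z^{-1}$ and $d\omega=\omega\wedge\omega$ holds identically. The denominators $x(1-x)$, $y(1-y)$, $1-\ell m$ (and $x(x+y-1)$, $y(x+y-1)$ for $F_4$) do not involve the parameters. So rationality in $(x,y)$ and polynomiality in the parameters extend the identity from a small open set and generic $\gamma_i\notin\mathbb{Z}$ to every nonsingular point and every parameter value.

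This route buys you a computation-free proof. The cost is importing the classical fact that the four shifted series actually solve the system, which is itself a coefficient-recursion check you should at least cite. The symbolic check, by contrast, is uniform in the parameters and self-contained, but heavy for $F_4$.

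Three small points:
\begin{enumerate}
\item Justify the invertibility of $Z$ explicitly. Use uniqueness for the linear system $dE=\omega E$ along paths: a nontrivial combination $z$ of the solutions with $E(z)(p)=0$ at one point would vanish identically, contradicting independence.
\item Your opening sentence overstates what closure gives. Closure of the span of $z,z_x,z_y,z_{xy}$ is automatic whenever $1-\ell m\neq 0$. Compatibility is exactly the extra content, supplied either by Step 3 or by the solution count.
\item The appeal to GKZ holonomicity is unnecessary once you have four explicit solutions.
\end{enumerate}

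As a consistency check, $1-\ell m=0$ is $x+y=1$ for $F_2$ and $(x+y-1)^2=4xy$ for $F_4$. These are precisely the non-coordinate components of the known singular loci.
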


\section{Hypergeometric line congruences}
\label{s-hypergeometric_congruences}
\par It follows from Proposition \ref{prop-F2_F4_integrable_systems} that both (\ref{eq-F2_hypergeometric_eqns}) and (\ref{eq-F4_hypergeometric_eqns}) define surfaces $S_{F_2}$ and $S_{F_4}$ in projective space. These surfaces are rather interesting since they intrinsically depend on the auxiliary parameters of the hypergeometric functions given in \S \ref{ss-integral_reps}. More generally, the surfaces $S_{F_2}$ and $S_{F_4}$ have been studied in some detail by Sasaki \cite{MR1858701}, who was able to compute systems of asymptotic coordinates for both surfaces. There, Sasaki showed that $S_{F_4}$ was a projectively applicable surface depending on precisely three parameters that are a specific combination of $\alpha,\beta,\gamma_1,\gamma_2$. This puts the surface in the category of Proposition \ref{prop-vanishing_invariants} (\textit{iv}). A similar analysis shows that $S_{F_2}$ is also projectively applicable with precisely two free parameters determined by certain combinations of the $\alpha,\beta_1,\beta_2,\gamma_1,\gamma_2$.

However, an analysis of these surfaces in conjugate coordinates, and hence of the associated Laplace transforms / invariants, or line congruences has not been done yet to the best of our knowledge. The closest to such an analysis is in \cite[Appx. A]{MR2216951}, where Sasaki attempted to compute the negative Laplace transform of the $F_2$ system. Our Theorem \ref{thm-F2_W_congruence} corrects an error present in that work.

Though these surfaces have geometry that is quite distinct, it turns out that their conformal structures are equivalent \cite[\S 5.4]{MR960834}; this is what allowed Sasaki's analysis of the corresponding surfaces in \cite{MR1858701}. Specifically, it follows from (\ref{eq-F2_hypergeometric_eqns}) and (\ref{eq-F4_hypergeometric_eqns}) that the conformal structures $[h_2]$ and $[h_4]$ of the surfaces $S_{F_2}$ and $S_{F_4}$ are generated respectively by the tensors
\begin{equation}
	\label{eq-F2_conformal}
	\scalemath{1}{h_2= y d x \otimes d x+ (1-x)(1-y)(d x \otimes d y+d y \otimes d x) + x d y \otimes d y \, ,}
\end{equation}
\begin{equation}
	\label{eq-F4_conformal}
	\scalemath{1}{h_4 = 2y d x \otimes d x+\left(1-x-y\right) (d x \otimes d y + d y \otimes d x)+2x d y \otimes d y \, .}
\end{equation}
If we change variables $(u=x,v=y)$ on $S_{F_4}$, then Sasaki \& Yoshida showed in loc. cit. that the rational map $T : S_{F_2} \to S_{F_4}$ defined by
\begin{equation}
	\label{eq-F2_F4_equivalence}
	\scalemath{1}{T \; : \quad (u,v)=\left(\frac{x^2}{(x+y-2)^2}, \frac{y^2}{(x+y-2)^2}\right)}
\end{equation}
satisfies $T^*[h_4]=[h_2]$. 
\par Key for us is the following formula representing $F_2$ as a certain two-parameter Euler integral transform of $_2F_1$ due to Clingher, Doran, \& Malmendier \cite[Corollary 2.2]{MR3767270}.
\begin{lemma}
	\label{lem-F2_integral_transform}
	For $\mathrm{Re}(\gamma_1) > \mathrm{Re}(\beta_1) > 0$ and $\mathrm{Re}(\gamma_2) > \mathrm{Re}(\beta_2) > 0$, the following integral relation holds between the Gauss hypergeometric function $_2F_1$ and Appell's function $F_2$:
	\begin{equation}
		\label{eq-F2_integral_transform}
	\scalemath{.8}{ F_2\left(\left.\begin{array}{c}
			\alpha ; \beta_1, \beta_2 \\
			\gamma_1, \gamma_2
		\end{array} \right\rvert\, \frac{1}{s}, 1-\frac{t}{s}\right)=-\frac{\Gamma\left(\gamma_2\right)s^\alpha(s-t)^{1-\gamma_2}}{\Gamma\left(\beta_2\right) \Gamma\left(\gamma_2-\beta_2\right)}
		\int_s^t \frac{d u}{(s-u)^{1-\beta_2}(u-t)^{1+\beta_2-\gamma_2} u^\alpha}{ } \, _2 F_1\left(\left.\begin{array}{c}
			\alpha, \beta_1 \\
			\gamma_1
		\end{array} \right\rvert\, \frac{1}{u}\right) .}
	\end{equation}
\end{lemma}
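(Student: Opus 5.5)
The plan is to reduce the identity to Euler's integral representation (\ref{eq-gauss_integral_rep}) of $_2F_1$, applied in the second variable of $F_2$, followed by a linear change of integration variable. First, regroup the double series (\ref{eq-appell_functions}) for $F_2$ by summing over $n$ for fixed $m$. Using $(\alpha)_{m+n}=(\alpha)_m(\alpha+m)_n$, this gives
\begin{equation*}
F_2\left(\left.\begin{array}{c}\alpha ; \beta_1, \beta_2 \\ \gamma_1, \gamma_2\end{array}\right\rvert\, x, y\right)=\sum_{m=0}^\infty \frac{(\alpha)_m(\beta_1)_m}{(\gamma_1)_m\, m!}\,x^m\; {}_2F_1\left(\left.\genfrac{}{}{0pt}{}{\alpha+m ; \beta_2}{\gamma_2}\right|\,y\right) .
\end{equation*}
Then, under the hypothesis $\mathrm{Re}(\gamma_2)>\mathrm{Re}(\beta_2)>0$, represent each inner $_2F_1$ by the Euler integral
\begin{equation*}
\frac{\Gamma(\gamma_2)}{\Gamma(\beta_2)\Gamma(\gamma_2-\beta_2)}\int_0^1 w^{\beta_2-1}(1-w)^{\gamma_2-\beta_2-1}(1-yw)^{-\alpha-m}\,dw .
\end{equation*}

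Next, specialize to $x=1/s$ and $y=1-t/s$. Then $1-yw=\bigl(s-(s-t)w\bigr)/s$. Setting $u:=s-(s-t)w$, the $m$-dependent factors combine as
\begin{equation*}
x^m(1-yw)^{-\alpha-m}=s^{-m}s^{\alpha+m}u^{-\alpha-m}=s^\alpha u^{-\alpha}(1/u)^m .
\end{equation*}
Interchanging sum and integral, the sum over $m$ reassembles into $s^\alpha u^{-\alpha}\,{}_2F_1(\alpha,\beta_1;\gamma_1\,|\,1/u)$.

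Finally, change variables from $w$ to $u$. As $w$ runs from $0$ to $1$, $u$ runs from $s$ to $t$, and $dw=-du/(s-t)$. The remaining factors transform as $w^{\beta_2-1}=(s-u)^{\beta_2-1}(s-t)^{1-\beta_2}$ and $(1-w)^{\gamma_2-\beta_2-1}=(u-t)^{\gamma_2-\beta_2-1}(s-t)^{1+\beta_2-\gamma_2}$. Collecting the powers of $(s-t)$ gives $(1-\beta_2)+(1+\beta_2-\gamma_2)-1=1-\gamma_2$, and the Jacobian contributes the overall minus sign. This reproduces exactly the prefactor and integrand of (\ref{eq-F2_integral_transform}). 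The hypothesis $\mathrm{Re}(\gamma_1)>\mathrm{Re}(\beta_1)>0$ is not used in this route; it only matters if one instead also writes $_2F_1(\cdot\,|\,1/u)$ by its Euler integral to verify the identity from the double-integral representation (\ref{eq-appell_integral_rep}).

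The main obstacle is justifying the interchange of summation and integration, together with the choice of branches. I would first work in a real region, say $s>t>1$, so that $u\in[t,s]$ satisfies $|1/u|\le 1/t<1$. There the $_2F_1$ series converges uniformly on the path, and the $F_2$ series converges for $|x|+|y|<1$, which holds when $1/s+1-t/s<1$, i.e.\ $t>1$. Termwise integration is then justified by dominated convergence, since the Euler integrand is integrable under the stated real-part conditions. Principal branches are fixed on this region, and the identity extends to the general complex domain by analytic continuation in $(s,t)$.
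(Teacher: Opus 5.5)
Your proof is correct. The paper gives no argument of its own for this lemma; it imports the identity from \cite[Corollary 2.2]{MR3767270}, so your text is a self-contained replacement for that citation.

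Each step checks out. The regrouping uses $(\alpha)_{m+n}=(\alpha)_m(\alpha+m)_n$. After $u=s-(s-t)w$ you get the factorization $x^m(1-yw)^{-\alpha-m}=s^\alpha u^{-\alpha}u^{-m}$. The bookkeeping of the powers of $s-t$ and of the sign from $dw=-du/(s-t)$ reproduces (\ref{eq-F2_integral_transform}) exactly. The interchange of sum and integral is justified: on $s>t>1$ the $m$-th term of the integrand is bounded by $C\,|c_m|\,t^{-m}\,w^{\mathrm{Re}\,\beta_2-1}(1-w)^{\mathrm{Re}(\gamma_2-\beta_2)-1}$, where $c_m=(\alpha)_m(\beta_1)_m/((\gamma_1)_m\,m!)$ and $\sum_m|c_m|t^{-m}<\infty$.

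The route suggested by the two-sided hypothesis in the statement is different:
\begin{enumerate}
\item Start from the double Euler integral (\ref{eq-appell_integral_rep}) for $F_2$.
\item Write $(1-xu-yv)^{-\alpha}=(1-yv)^{-\alpha}\bigl(1-\tfrac{x}{1-yv}\,u\bigr)^{-\alpha}$.
\item Integrate out $u$ by Euler's formula to obtain ${}_2F_1\bigl(\alpha,\beta_1;\gamma_1\,|\,x/(1-yv)\bigr)$.
\item Substitute in $v$ exactly as you do in $w$.
\end{enumerate}
Taking (\ref{eq-appell_integral_rep}) as known, that route needs only Fubini for an absolutely convergent double integral. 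However, it genuinely uses $\mathrm{Re}(\gamma_1)>\mathrm{Re}(\beta_1)>0$.

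Your series route pays for a termwise-integration estimate, but in exchange it shows this hypothesis is unnecessary; only $\gamma_1\notin\mathbb{Z}_{\le 0}$ is needed. Working on the explicit real region $s>t>1$ also fixes the branches of the multivalued factors in (\ref{eq-F2_integral_transform}), which the paper leaves implicit.

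You also correctly use $\mathrm{Re}(\gamma_2)>\mathrm{Re}(\beta_2)>0$ for Euler's formula; the inequality in (\ref{eq-gauss_integral_rep}) is printed reversed.
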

Equation (\ref{eq-F2_integral_transform}) has a number of surprising consequences that are explored in \cite{MR3767270}, as well as by the second author \cite[\S 5.3.2, Prop. 6.2.92]{schultz_geometry_2021}. Most significantly for us, it produces a system of conjugate coordinates on the surface $S_{F_2}$:
\begin{lemma}
	\label{lem-F2_conjugate}
	The rational map $T_2 : S_{F_2} \to S_{F_2}$ given by 
	\begin{equation}
		\label{eq-F2_conjugate_coords}
		\scalemath{1}{T_2 \; : \quad (x,y) = \left(\frac{1}{s},1-\frac{t}{s}\right)}
	\end{equation}
	produces a system of conjugate coordinates on $S_{F_2}$.
\end{lemma}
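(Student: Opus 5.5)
The plan is to verify Definition \ref{def-conjugate_coords2} directly: after the change of variables (\ref{eq-F2_conjugate_coords}), some linear combination (with function coefficients) of the two equations in (\ref{eq-F2_hypergeometric_eqns}) should be a hyperbolic equation of the form (\ref{eq-hyperbolic_PDE}) in the variables $(s,t)$. Such an equation contains $F_{st}$ but neither $F_{ss}$ nor $F_{tt}$. Since the second-order part of a linear system transforms as a quadratic form in the cotangent variables, it is enough to do two things.

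\begin{enumerate}
\item[(a)] Show that the pencil spanned by the principal symbols of the two transformed equations contains a multiple of $\sigma_s\sigma_t$. Here $\sigma_s,\sigma_t$ are the cotangent variables dual to $s,t$.
\item[(b)] Record the lower-order terms of that combination, so that the resulting equation is of the form (\ref{eq-hyperbolic_PDE}).
\end{enumerate}

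Any second equation in the system, with an $F_{ss}$ or $F_{tt}$ term, can then be normalised to the form (\ref{eq-conjugate_eq2}). Together these give (\ref{eq-rank-4_system_conjugate}).

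\textbf{Concrete steps.} First, write $x=1/s$ and $y=1-t/s$. By the chain rule,
\[
\partial_s=-s^{-2}\partial_x+ts^{-2}\partial_y,\qquad \partial_t=-s^{-1}\partial_y .
\]
Inverting gives
\[
\partial_x=-s^2\partial_s-st\,\partial_t,\qquad \partial_y=-s\,\partial_t .
\]
Second, substitute these into the principal parts
\[
x(1-x)\xi^2-xy\,\xi\eta,\qquad y(1-y)\eta^2-xy\,\xi\eta .
\]
Third, solve for the functions $\lambda(s,t),\mu(s,t)$ such that $\lambda\,(\text{first})+\mu\,(\text{second})$ has vanishing $\sigma_s^2$ and $\sigma_t^2$ coefficients. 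This is a $2\times 2$ linear condition on $(\lambda,\mu)$, so there must be a nontrivial solution with nonvanishing $\sigma_s\sigma_t$ coefficient. Fourth, apply the same combination to the first-order and zeroth-order terms, and divide by the $F_{st}$ coefficient to read off $a,b,c$.

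\textbf{Cross-check.} I expect the result, after the gauge $F\mapsto s^{\alpha}(s-t)^{1-\gamma_2}F$ suggested by the prefactor in (\ref{eq-F2_integral_transform}), to be an Euler--Poisson--Darboux equation (\ref{eq-EPD}) in $(s,t)$. Lemma \ref{lem-F2_integral_transform} gives an independent check of this: the gauged function is a Riemann--Liouville type transform with kernel $(s-u)^{\beta_2-1}(u-t)^{\gamma_2-\beta_2-1}$. Differentiating under the integral sign, and using that the kernel $K$ satisfies
\[
(s-t)K_{st}-(\beta_2-1)\ldots=0,
\]
an EPD-type identity, recovers the hyperbolic equation. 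The $u$-dependent factor $u^{-\alpha}{}_2F_1(\cdot\,|\,1/u)$ is independent of $(s,t)$ and plays no role in this step.

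\textbf{Main obstacle.} The main obstacle is the solvability in the third step: one must check that the determinant condition really allows both $\sigma_s^2$ and $\sigma_t^2$ to vanish with a nonzero cross term. A quick test using the displayed tensor (\ref{eq-F2_conformal}) does not produce an obviously diagonal form in $(s,t)$. So I will work with the symbols of (\ref{eq-F2_hypergeometric_eqns}) directly, or dually with the inverse symbols, rather than trusting the displayed $h_2$.

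If the symbol computation becomes unwieldy, I will fall back on the integral-transform argument above. That argument proves directly that the gauged function satisfies (\ref{eq-EPD}). Since every solution of (\ref{eq-F2_hypergeometric_eqns}) arises from such integrals by varying the cycle and the ${}_2F_1$-solution, the whole rank-4 solution space, and hence $\psi$, satisfies a hyperbolic equation in $(s,t)$. This again gives conjugate coordinates by Definition \ref{def-conjugate_coords2}.
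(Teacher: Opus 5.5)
Your plan is correct, and it is the dual of the paper's argument. The paper checks Definition \ref{def-conjugate_coords3}: it pulls back $[h_2]$ along $T_2$ and observes that the result is diagonal. You check Definition \ref{def-conjugate_coords2} directly, by looking for $\sigma_s\sigma_t$ in the pencil of principal symbols. The two checks are equivalent, because $[h]$ is exactly the class of quadratic forms on tangent vectors that are apolar to both symbols. Hence $\sigma_s\sigma_t$ lies in the pencil if and only if $h$ has no mixed $ds\,dt$ term.

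Your version has two advantages. It produces the hyperbolic equation itself, which after the gauge is the EPD equation used in \S\ref{s-f_2_conjugate_and_laplace}. It also does not rely on the display (\ref{eq-F2_conformal}), and your suspicion of that display is justified. From (\ref{eq-F2_hypergeometric_eqns}) one reads off $\ell = y/(1-x)$ and $m = x/(1-y)$, so
$[h_2] = [\,y(1-y)\,dx\otimes dx + (1-x)(1-y)(dx\otimes dy + dy\otimes dx) + x(1-x)\,dy\otimes dy\,]$.
For this form, $T_2^*h_2 = s^{-6}\bigl(st(1-t)\,ds\otimes ds + s^2(s-1)\,dt\otimes dt\bigr)$, which is the diagonal tensor quoted in the paper's proof. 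The form as printed instead pulls back with the mixed term $-s^{-5}t\,(ds\otimes dt + dt\otimes ds)$.

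One correction to your third step. A homogeneous $2\times 2$ system need not have a nontrivial solution, so ``there must be'' is not a valid inference. The vanishing of that determinant is the entire content of the lemma. It is, however, immediate to verify. Since $\partial_y = -s\partial_t$ and $(1-y)\partial_y - x\partial_x = s\partial_s$, the symbol of the second equation,
$y(1-y)\eta^2 - xy\,\xi\eta = y\,\eta\,\bigl((1-y)\eta - x\xi\bigr)$,
becomes exactly $-s(s-t)\,\sigma_s\sigma_t$. So $(\lambda,\mu)=(0,1)$. The first symbol becomes $s^2(s-1)\sigma_s^2 + s(2st-s-t)\sigma_s\sigma_t + st(t-1)\sigma_t^2$. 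Eliminating $F_{st}$ then gives $q = s(s-1)/\bigl(t(t-1)\bigr)$, in agreement with the paper.

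The integral-transform fallback is unnecessary. As stated, it would also need two arguments you do not supply. First, that such transforms span the whole rank-4 solution space. Second, control of the endpoint terms when differentiating an integral whose limits are $s$ and $t$.
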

\begin{proof}
	By direct computation, the pullback $T_2^*h_2$ is conformal to the diagonal tensor $	d s \otimes d s-\frac{s(1-s)}{t(1-t)} d t \otimes d t$. It follows from Definition \ref{def-conjugate_coords3} that the new coordinates are conjugate on $S_{F_2}$.
\end{proof}

Since the surfaces $S_{F_2}$ and $S_{F_4}$ are conformally equivalent via (\ref{eq-F2_F4_equivalence}), it follows that conjugate coordinates may be produced on $S_{F_4}$ by composition.
\begin{lemma}
	\label{lem-F4_conjugate}
	The rational map $T_4 : S_{F_4} \to S_{F_4}$ given by 
	\begin{equation}
		\label{eq-F4_conjugate_coords}
	\scalemath{1}{	T_4 \; : \quad (x,y) = \left(\frac{1}{(s+t)^2},\frac{(s-t)^2}{(s+t)^2}\right)}
	\end{equation}
	produces a system of conjugate coordinates on $S_{F_4}$.
\end{lemma}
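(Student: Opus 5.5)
The plan is to verify Definition~\ref{def-conjugate_coords3} directly, exactly as in the proof of Lemma~\ref{lem-F2_conjugate}. I will pull back the generator $h_4$ of the conformal class $[h_4]$ in (\ref{eq-F4_conformal}) along $T_4$ and show that the coefficient of $ds\otimes dt+dt\otimes ds$ vanishes identically. By Definition~\ref{def-conjugate_coords3}, a diagonal representative means $(s,t)$ are conjugate coordinates, since conjugacy depends only on the conformal class. An alternative route is to write $T_4$ as the composite of $T_2$ from Lemma~\ref{lem-F2_conjugate} with the equivalence $T$ of (\ref{eq-F2_F4_equivalence}) and a reparametrization $s\mapsto f(s)$, $t\mapsto g(t)$, which preserves diagonality. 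I prefer the direct computation because it is short and self-contained.

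To organize the computation I introduce $\sigma=s+t$ and $\delta=s-t$, so that $x=\sigma^{-2}$ and $y=\delta^2\sigma^{-2}$. Differentiating gives
\begin{equation*}
dx=-2\sigma^{-3}(ds+dt),\qquad dy=2\delta\sigma^{-3}\bigl(\sigma(ds-dt)-\delta(ds+dt)\bigr)=4\delta\sigma^{-3}(t\,ds-s\,dt).
\end{equation*}
Set $e=ds+dt$ and $f=t\,ds-s\,dt$. Then, up to the harmless overall factor $\sigma^{-6}$, the pullback $T_4^*h_4$ equals
\begin{equation*}
8y\,e^2-16\delta(1-x-y)\,ef+32x\delta^2 f^2,
\end{equation*}
where products mean symmetrized tensor products.

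Next I extract the mixed $ds\,dt$ coefficient from each term. From $e^2$ the mixed coefficient is $2$; from $ef$ it is $t-s=-\delta$; from $f^2$ it is $-2st$. So the total mixed coefficient is
\begin{equation*}
16y+16\delta^2(1-x-y)-64x\delta^2 st.
\end{equation*}
Substituting $x=\sigma^{-2}$ and $y=\delta^2\sigma^{-2}$ and multiplying by $\sigma^2/16$ turns this into
\begin{equation*}
\delta^2\bigl(1+\sigma^2-1-\delta^2-4st\bigr)=\delta^2(\sigma^2-\delta^2-4st)=0,
\end{equation*}
because $\sigma^2-\delta^2=4st$. Hence $T_4^*h_4$ is diagonal in $\{ds,dt\}$.

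The main obstacle is only bookkeeping: getting the symmetrization factors and signs in the mixed coefficients right. Writing $dx$ and $dy$ in terms of $e$ and $f$ first is what makes the cancellation transparent.

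For completeness I will also record the diagonal coefficients, to confirm that $h_4$ stays nondegenerate after pullback, i.e.\ that the resulting diagonal form has the shape $ds\otimes ds-\tilde q\,dt\otimes dt$ with $\tilde q\neq0$ away from the singular loci ($st=0$, $s=\pm t$, and similar). This is also the data needed later to read off the conjugate system (\ref{eq-rank-4_system_conjugate}) for $S_{F_4}$.
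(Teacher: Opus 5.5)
Your proposal is correct and is essentially the paper's own proof: the paper simply computes $T_4^*h_4$ directly and observes that it is conformal to the diagonal tensor $ds\otimes ds-\frac{4s^2-1}{4t^2-1}\,dt\otimes dt$. Your $e,f$ bookkeeping checks out, and finishing the diagonal terms gives $T_4^*h_4=8(s-t)(s+t)^{-7}\bigl[(1-4t^2)\,ds\otimes ds+(4s^2-1)\,dt\otimes dt\bigr]$, which agrees with the paper. One small correction to your closing aside: the degeneracy loci are $s=\pm t$ and $s=\pm\tfrac12$ or $t=\pm\tfrac12$, not $st=0$. The alternative route you mention, via $T_4=\widetilde{T}\circ T\circ T_2$, is exactly the paper's subsequent remark.
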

\begin{proof}
	By direct computation, $T_4^*h_4$ is conformal to $d s \otimes d s-\frac{4 s^2-1}{4 t^2-1} dt \otimes dt$.
\end{proof}
\begin{remark}
	The mapping $T_4$ is the composition $	T_4 = \widetilde{T} \circ T \circ T_2$, where $T$ is the map in (\ref{eq-F2_F4_equivalence})  and $\widetilde{T} (s,t) = (s+\frac{1}{2},t+\frac{1}{2})$ and we identify the domain as a subset of $S_{F_4}$ using the fact that $T$ is a local conformal isomorphism.
\end{remark}
\subsection{Appell's $F_2$ in conjugate coordinates}
\label{s-f_2_conjugate_and_laplace}
Under the change of coordinates in Lemma \ref{lem-F2_conjugate} together with the projective gauge transformation $z \mapsto s^\alpha(s-t)^{1-\gamma_2} z$, the conjugate system for $F_2$ has coefficients
\begin{equation*}
	\scalemath{.7}{\begin{cases}
		a = -\frac{1-\gamma_2+\beta_2}{s-t} \\
		b = -\frac{\beta_2-1}{s-t} \\
		c = 0 
	\end{cases}} \quad
	\scalemath{.7}{\begin{cases}
		q = \frac{(s-1) s}{(t-1) t} \\
		m = \frac{\left(2 \alpha-\gamma_1-2 \gamma_2+4\right) s^2+\left(-2 \alpha+2 \beta_2+\gamma_1-2\right) s t+\left(-\alpha+\beta_1-\beta_2+2 \gamma_2-3\right) s+\left(\alpha-\beta_1-\beta_2+1\right) t}{t(t-1)(s-t)}\\
		n = \frac{\left(2 \alpha+2 \beta_2-\gamma_1-2 \gamma_2+2\right) s t+\left(-\alpha+\beta_1-\beta_2+\gamma_2-1\right) s+\left(-2 \alpha+\gamma_1+2 \gamma_2-4\right) t^2+\left(\alpha-\beta_1-\beta_2-\gamma_2+3\right) t}{t(t-1)(s-t)}\\
		r = \frac{\left(-\alpha+\gamma_2-1\right)\left(\gamma_2+\gamma_1-\alpha-2\right)}{(t-1) t} \\
	\end{cases}}
\end{equation*}
Notice that the gauge factor is precisely the nonconstant part of the multiplicative factor appearing (\ref{eq-F2_integral_transform}). We see as well that the hyperbolic equation (\ref{eq-conjugate_eq1}) for $F_2$ is precisely the Euler-Poisson-Darboux equation (\ref{eq-EPD}). 

\begin{theorem}
	\label{thm-F2_laplace}
	The sequence of Laplace invariants for $F_2$ is equivalent to those of the Euler-Poisson-Darboux equation (\ref{eq-EPD}).
\end{theorem}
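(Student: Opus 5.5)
The strategy is to show that the hyperbolic equation in the conjugate $F_2$ system is literally an EPD equation with specific constants. Since the Laplace invariants and the recursion (\ref{eq-higher_laplace}) depend only on the coefficients $a,b,c$ of that hyperbolic equation, the two sequences of invariants will then agree. From the coefficients listed above, after the change of coordinates of Lemma \ref{lem-F2_conjugate} and the gauge $z\mapsto s^\alpha(s-t)^{1-\gamma_2}z$, the first equation of (\ref{eq-rank-4_system_conjugate}) reads
\[
z_{st} - \frac{1-\gamma_2+\beta_2}{s-t}\,z_s - \frac{\beta_2-1}{s-t}\,z_t = 0 .
\]
Comparing with (\ref{eq-EPD}) under $(x,y)=(s,t)$, this is the Euler--Poisson--Darboux equation with
\[
\beta=\beta_2-1, \qquad \beta'=1-\gamma_2+\beta_2 .
\]
The first step is to justify these coefficients, in particular that $c=0$. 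I would verify this directly by pushing the first equation of (\ref{eq-F2_hypergeometric_eqns}) through the substitution $(x,y)=(1/s,1-t/s)$ and the gauge factor. Alternatively, one can differentiate under the integral in Lemma \ref{lem-F2_integral_transform}: the kernel $(s-u)^{\beta_2-1}(u-t)^{\gamma_2-\beta_2-1}$, multiplied by the gauge prefactor, is annihilated by an EPD operator in $(s,t)$, and the boundary terms vanish under the stated real-part hypotheses.

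Once this identification is made, the zeroth invariants follow from (\ref{eq-EPD_laplace_invariants}):
\[
\mathsf{h}_0=\frac{(\beta_2-1)(2-\gamma_2+\beta_2)}{(s-t)^2}, \qquad \mathsf{k}_0=\frac{(1-\gamma_2+\beta_2)\beta_2}{(s-t)^2}.
\]
The higher invariants are generated by (\ref{eq-higher_laplace}). This recursion involves only $\mathsf{h}_n$, $\mathsf{k}_n$ and $(\log\mathsf{h}_n)_{st}$, and hence only data of the hyperbolic equation. It is also unaffected by the gauge transformations (\ref{eq-transform1}) and transforms covariantly under the coordinate changes (\ref{eq-transform2}). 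Therefore the whole sequence $\{\mathsf{h}_n,\mathsf{k}_n\}_{n\in\mathbb{Z}}$ for $S_{F_2}$ coincides with that of the EPD equation with the parameters above.

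To make this explicit, I would check by induction that every Laplace transform of an EPD equation is again an EPD equation with shifted parameters. Using (\ref{eq-laplace_transform_components1}) with $(\log\mathsf{h})_y=2/(x-y)$, one gets $a_1=-(\beta'+2)/(x-y)$ and $b_1=b$, so $(\beta,\beta')\mapsto(\beta,\beta'+1)$ up to a gauge producing $c_1$. The invariants then have the closed form
\[
\mathsf{h}_n=\frac{\beta(\beta'+n+1)}{(x-y)^2}-\text{correction},
\]
which I would pin down from the recursion. Concretely, I would verify that the ansatz $\mathsf{h}_n=\kappa_n/(x-y)^2$ is preserved. This holds because $(\log (x-y)^{-2})_{xy}=-2/(x-y)^2$, which gives the linear recursion $\kappa_{n+1}=2\kappa_n-\kappa_{n-1}+2$, with the appropriate sign check.

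The main obstacle is the first step: confirming that the gauged, transformed $F_2$ system has $c=0$ and exactly these $a,b$, since any sign error there propagates through the whole sequence. I would settle it by the integral-representation argument, with a direct symbolic check as backup. I would also note that degenerate steps, where some $\kappa_n=0$, occur only on linear conditions in the parameters, and exclude them in the statement.
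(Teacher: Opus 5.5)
Your route is the paper's: the paper's proof also just matches the hyperbolic equation of the conjugate $F_2$ system with (\ref{eq-EPD}) and reads off $\mathsf{h},\mathsf{k}$. Your remark that (\ref{eq-higher_laplace}) only sees $a,b,c$ is what makes the whole sequence coincide. Your identification $\beta=\beta_2-1$, $\beta'=\beta_2-\gamma_2+1$ is the one that literally reproduces the listed $a,b$. The substitution printed in the paper's proof does not; read as $\beta=\beta_2-\gamma_2$, $\beta'=\beta_2$, it is the EPD equation obtained by gauging with $s^\alpha$ alone. That equation differs from yours by the factor $(s-t)^{1-\gamma_2}$ and has the same invariants. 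So the structural argument is sound.

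The explicit formulas you write down are wrong, however.
\begin{itemize}
\item You took $\mathsf{h}_0,\mathsf{k}_0$ from (\ref{eq-EPD_laplace_invariants}), but that display does not agree with the definition (\ref{eq-laplace_invariants}). For (\ref{eq-EPD}) one has $a=-\beta'/(x-y)$, $b=-\beta/(x-y)$, $c=0$, hence $\mathsf{h}=\beta'(\beta+1)/(x-y)^2$ and $\mathsf{k}=\beta(\beta'-1)/(x-y)^2$. With your parameters this gives $\mathsf{h}_0=\beta_2(\beta_2-\gamma_2+1)/(s-t)^2$ and $\mathsf{k}_0=(\beta_2-1)(\beta_2-\gamma_2)/(s-t)^2$, which are exactly the values in the paper's proof. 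Your $\mathsf{k}_0$ is really $\mathsf{h}_0$, and your $\mathsf{h}_0$ does not occur in the sequence for generic parameters. So the degenerate parameter values you plan to exclude would be misplaced.
\item The shift is not $(\beta,\beta')\mapsto(\beta,\beta'+1)$. From (\ref{eq-laplace_transform_components1}) one gets $c_1=(\beta-\beta')/(x-y)^2$. Using $((x-y)z^+)_x=(\beta+1)z_y$, one checks directly that $(x-y)z^+$ satisfies (\ref{eq-EPD}) with parameters $(\beta+1,\beta'+1)$.
\item Your recursion $\kappa_{n+1}=2\kappa_n-\kappa_{n-1}+2$ is correct. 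With $\kappa_0=\beta'(\beta+1)$ and $\kappa_{-1}=\beta(\beta'-1)$ it gives $\kappa_n=(\beta'+n)(\beta+n+1)$, i.e.\ $\mathsf{h}_n=(\beta_2+n)(\beta_2-\gamma_2+1+n)/(s-t)^2$ and $\mathsf{k}_n=\mathsf{h}_{n-1}$, with no ``correction'' term.
\item The $z_{st}$ equation comes from the \emph{second} equation of (\ref{eq-F2_hypergeometric_eqns}), whose $z_{tt}$ terms cancel under $(x,y)=(1/s,1-t/s)$. The first equation keeps a $z_{ss}$ term with coefficient $(s-1)s^2$.
\item It is the bare kernel $(s-u)^{\beta_2-1}(u-t)^{\gamma_2-\beta_2-1}$ that is annihilated by the listed operator. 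Multiplying it by the prefactor $s^\alpha$ destroys the EPD form when $\alpha\neq0$.
\end{itemize}
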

\begin{proof}
Under the change of parameters $\beta \to \gamma_2-\beta_2$, $\beta' \to \beta_2$, the EPD equation (\ref{eq-EPD}) becomes precisely the form of the hyperbolic equation for $F_2$ above, yielding
\begin{equation*}
	\scalemath{1}{\mathsf{h} = \frac{\left(\beta_2-\gamma_2+1\right) \beta_2}{(s-t)^2} \, ,
	\quad
	\mathsf{k} = \frac{\left(\beta_2-1\right)(\beta_2 - \gamma_2)}{(s-t)^2}} 
\end{equation*}
as the first-order invariants of $F_2$, equivalent to (\ref{eq-EPD_laplace_invariants}).
\end{proof}

\begin{corollary}
	\label{cor-F2_laplace_invariants}
	The sequence of Laplace invariants for $F_2$ is determined by 
	\begin{equation}
\scalemath{1}{\mathsf{h}_n = \frac{(n- 1 + \beta_2)(n+\beta_2 - \gamma_2)}{(s-t)^2}
		\quad\quad
		\mathsf{k}_n = \frac{(n- 2 + \beta_2)(n-1+\beta_2 - \gamma_2)}{(s-t)^2}}
	\end{equation}
\end{corollary}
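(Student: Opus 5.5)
The plan is to prove the formula by induction on $n$ in both directions, using the recursion (\ref{eq-higher_laplace}) of Remark \ref{rmk-higher_laplace} with $(x,y)$ replaced by the conjugate coordinates $(s,t)$. The base case comes from Theorem \ref{thm-F2_laplace}. Setting $n=1$ in the claimed formulas gives
\[
\mathsf{h}_1=\frac{\beta_2(1+\beta_2-\gamma_2)}{(s-t)^2}, \qquad \mathsf{k}_1=\frac{(\beta_2-1)(\beta_2-\gamma_2)}{(s-t)^2},
\]
which are exactly the invariants $\mathsf{h},\mathsf{k}$ computed there. So the indexing of the corollary labels the original conjugate system by $n=1$, and I will anchor the induction at that step. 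Throughout I assume generic parameters, so that no $\mathsf{h}_n$ or $\mathsf{k}_n$ vanishes; this keeps every Laplace transform nondegenerate and every logarithm defined.

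For the upward step, the relation $\mathsf{k}_{n+1}=\mathsf{h}_n$ is immediate from the formulas: replacing $n$ by $n+1$ in $\mathsf{k}_n$ gives $(n-1+\beta_2)(n+\beta_2-\gamma_2)/(s-t)^2=\mathsf{h}_n$. For $\mathsf{h}_{n+1}$ I use that every $\mathsf{h}_n$ has the form $C_n/(s-t)^2$ with $C_n$ constant. Then $\log\mathsf{h}_n=\log C_n-2\log(s-t)$, so
\[
(\log\mathsf{h}_n)_{st}=-\frac{2}{(s-t)^2}.
\]
Write $A=n+\beta_2$ and $B=n+\beta_2-\gamma_2$. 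The recursion then gives
\[
(s-t)^2\,\mathsf{h}_{n+1}=2(A-1)B-(A-2)(B-1)+2=AB+A=A(B+1),
\]
and $A(B+1)=(n+\beta_2)(n+1+\beta_2-\gamma_2)$ is the claimed numerator of $\mathsf{h}_{n+1}$. This completes the induction for $n\ge 1$.

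For negative indices I run the recursion backwards. The second relation in (\ref{eq-higher_laplace}) gives $\mathsf{h}_{n-1}=\mathsf{k}_n$ directly. Solving the first relation for the remaining invariant gives $\mathsf{k}_{n-1}=2\mathsf{h}_{n-1}-\mathsf{h}_n-(\log\mathsf{h}_{n-1})_{st}$, which is equivalently the symmetric relation $\mathsf{k}_{n-1}=2\mathsf{k}_n-\mathsf{h}_n-(\log\mathsf{k}_n)_{st}$ from \cite[Prop.~4.10]{MR2216951}. The same one-line polynomial identity verifies this, with the $(s-t)^{-2}$ dependence again reducing the logarithmic term to $2/(s-t)^2$.

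I do not expect a genuine computational obstacle. The points needing care are the index bookkeeping, namely matching $n=1$ with the original system of Theorem \ref{thm-F2_laplace}, and recording the genericity assumption. Where some $\mathsf{h}_n$ vanishes, the sequence terminates, because the transformed surface degenerates to a curve.
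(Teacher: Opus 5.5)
Your proof is correct and is the paper's argument, induction in both directions via (\ref{eq-higher_laplace}) from the base values of Theorem \ref{thm-F2_laplace}, with the details the paper leaves out now supplied: $(\log \mathsf{h}_n)_{st}=-2/(s-t)^2$ and the polynomial identities, which check out. Anchoring at $n=1$ is in fact forced: under the convention $z^0:=z$ of Remark \ref{rmk-higher_laplace}, the stated formulas are off by one index (at $n=0$ they give $\mathsf{h}_0=\mathsf{k}$ rather than $\mathsf{h}$), so you should state explicitly that you are relabeling the original surface as $n=1$, not just assert that the corollary does so.
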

\begin{proof}
	By induction on $n$ using the recursive relationships in (\ref{eq-higher_laplace}).
\end{proof}

\begin{remark}
	\label{rmk-F2_vanishing_laplace}
	We can easily determine when the (higher) Laplace invariants of $F_2$ vanish in terms of the parameters $\beta_2,\gamma_2$.
\end{remark}

The meaning of these results can be understood completely in terms of the integral representation (\ref{eq-F2_integral_transform}), since the  EPD equation (\ref{eq-EPD}) admits a Riemann function in terms of the Gauss hypergeometric function $_2F_1$ \cite[\S 4.6]{MR2216951} as well as solutions of the form $z(x,y) = (u-x)^\beta (u-y)^{-\beta^\prime}$
which appear in the ``splitting" of the integrand of (\ref{eq-F2_integral_transform}) for the values of $\beta,\beta^\prime$ given in Theorem \ref{thm-F2_laplace}. Moreover, we have the following by direct computation using (\ref{eq-positive_weingarten}, \ref{eq-negative_weingarten}):

\begin{theorem}
	\label{thm-F2_W_congruence}
	For all values of the parameters $\alpha,\beta_1,\beta_2,\gamma_1,\gamma_2$, both the positive and negative congruences $\mathbf{L}^\pm = \{S_{F_2},S_{F_2}^\pm\}$ are $W$-congruences. 
\end{theorem}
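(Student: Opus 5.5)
The plan is a direct verification: substitute the conjugate-coordinate coefficients of the $F_2$ system (listed just above, with $(x,y)$ replaced by $(s,t)$) into the Weingarten criteria of Corollary \ref{cor-positive_W} and Corollary \ref{cor-negative_W}, and check that both expressions vanish identically in the parameters. Theorem \ref{thm-F2_laplace} together with Corollary \ref{cor-F2_laplace_invariants} gives the nondegeneracy hypotheses $\mathsf{h},\mathsf{k}\neq 0$ of Theorems \ref{thm-positive_laplace_system} and \ref{thm-negative_laplace_system} away from the finitely many parameter values where $\beta_2$ or $\gamma_2-\beta_2$ make them vanish. Only on this generic locus is $S_{F_2}^\pm$ a surface, so this is the sense in which the statement is to be read.

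\emph{Positive congruence.} I would first rewrite $W^+ = 2q^2a_s - q\,m_t + m\,q_t$ as
\[
W^+ = q^2\Bigl(2a_s - \bigl(m/q\bigr)_t\Bigr),
\]
which removes the awkward $t$-dependence of $q=s(s-1)/(t(t-1))$. Writing $m = M/(t(t-1)(s-t))$ with
\[
M = c_1 s^2 + c_2 st + c_3 s + c_4 t ,
\]
one gets $m/q = M/\bigl(s(s-1)(s-t)\bigr)$. Hence
\[
(m/q)_t = \frac{M_t(s-t)+M}{s(s-1)(s-t)^2}.
\]
The key step is then the polynomial identity $M_t(s-t)+M = (c_1+c_2)s^2+(c_3+c_4)s$. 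Its $st$- and $t$-terms cancel automatically. Reading off the coefficients of $m$ gives $c_1+c_2 = 2(1+\beta_2-\gamma_2)$ and $c_3+c_4 = -2(1+\beta_2-\gamma_2)$. Therefore $(m/q)_t = 2(1+\beta_2-\gamma_2)/(s-t)^2$. Since $a = -(1-\gamma_2+\beta_2)/(s-t)$, we have $a_s = (1-\gamma_2+\beta_2)/(s-t)^2$, and so $2a_s-(m/q)_t=0$, giving $W^+\equiv 0$.

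\emph{Negative congruence.} Here $W^- = 2b_t - n_s$. Since $b = -(\beta_2-1)/(s-t)$, we get $b_t = -(\beta_2-1)/(s-t)^2$. I would write $n = N/(t(t-1)(s-t))$ and compute
\[
n_s = \frac{N_s(s-t) - N}{t(t-1)(s-t)^2}.
\]
The task is to verify that $N_s(s-t)-N = -2(\beta_2-1)\,t(t-1)$. As in the positive case, the $s$-dependent terms of the numerator should cancel, because $N$ is at most linear in $s$. What remains is to collect the $t^2$ and $t$ coefficients from the explicit $N$ and check them against $-2(\beta_2-1)$ and $2(\beta_2-1)$.

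The main obstacle is purely bookkeeping: keeping straight which variable plays the role of $x$ and which of $y$ in Corollaries \ref{cor-positive_W} and \ref{cor-negative_W}. The argument also relies on the listed coefficients $m,n$ being correct after the gauge transformation $z\mapsto s^\alpha(s-t)^{1-\gamma_2}z$. The $\alpha,\beta_1,\gamma_1$ dependence drops out in both cases through the sums of coefficients. If it did not, I would recheck the gauge-transformed $m,n$ before anything else.
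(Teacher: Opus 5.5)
Your proposal is correct and is essentially the paper's own argument, which consists only of substituting the conjugate $F_2$ coefficients into $W^+$ and $W^-$ and checking that both vanish. Your rewriting $W^+=q^2\bigl(2a_s-(m/q)_t\bigr)$ is just a clean way to organize that computation. The negative-case check you left open does close: writing $N=d_1st+d_2s+d_3t^2+d_4t$ gives $N_s(s-t)-N=-(d_1+d_3)t^2-(d_2+d_4)t$ with $d_1+d_3=2(\beta_2-1)$ and $d_2+d_4=-2(\beta_2-1)$, so $n_s=-2(\beta_2-1)/(s-t)^2=2b_t$ and $W^-\equiv 0$.
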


This corrects an error in \cite[Appx. A]{MR2216951}, where it was claimed that the negative congruence $\mathbf{L}^-$ is a $W$-congruence if and only if $\beta_2=0$. There, the author attempts to compute the negative Laplace invariants of $F_2$ by utilizing the commuting vector fields $X = (x-1)\partial_x$, $Y =  X + y\partial_y$ on $S_{F_2}$ which satisfy $h_2(X,Y)=0$ and rewriting the $F_2$ system (\ref{eq-F2_hypergeometric_eqns}) in terms of $X,Y$. However, one can show that up to the allowable changes of coordinates described in \S \ref{sss-laplace_invariants}, by straightening the commuting vector fields $X,Y$, one recovers precisely the conjugate coordinates from Lemma \ref{lem-F2_conjugate}. 

\subsection{Appell's $F_4$ in conjugate coordinates}
\label{ss-f_4_conjugate_and_laplace}
Using Lemma \ref{lem-F4_conjugate} together with the projective gauge transformation $z \mapsto (s+t)^{\alpha+\beta-\frac{1}{2}}(s-t)^{-\gamma_2+\frac{1}{2}}z$, the conjugate system for $F_4$ has coefficients
\begin{equation*}
\scalemath{.7}{\begin{cases}
		a = 0 \\
		b = 0 \\
		c = \frac{(\gamma_2-\frac{1}{2})(\gamma_2-\frac{3}{2}) }{(s-t)^2} - \frac{(\alpha-\beta +\frac{1}{2})(\alpha-\beta -\frac{1}{2})}{(s+t)^2} 
	\end{cases}} 
\scalemath{.60}{\begin{cases}
		q = \frac{4s^2-1}{4t^2-1} \\
		m  \frac{4s\left(2 (\alpha+ \beta- \gamma_1- \gamma_2)+3\right)}{(2 t-1)(2 t+1)}\\
		n = \frac{4t\left(2 (\alpha+ \beta- \gamma_1- \gamma_2)+3\right)}{(2 t-1)(2 t+1)} \\
		r =\frac{\left(\alpha+\beta-\gamma_2\right)\left(\alpha+\beta-2 \gamma_1-\gamma_2+2\right)}{(2 t-1)(2 t+1)} - \frac{2t(2\gamma_2-1)(2\gamma_2-3)}{(s-t)(2 t-1)(2 t+1)} - \frac{1}{2} \frac{(2 \alpha-2 \beta+1)(2 \alpha-2 \beta-1)}{(s+t)^2} - \frac{1}{2} \frac{(2\gamma_2-1)(2\gamma_2-2)}{(s-t)^2} + \frac{2t(2 \alpha-2 \beta+1)(2 \alpha-2 \beta-1)}{(s+t)(2 t-1)(2 t+1)}\\
	\end{cases}}
\end{equation*}

\begin{theorem}
	\label{thm-F4_laplace_invariants}
	The sequence of Laplace invariants for $F_4$ is equivalent to those for Darboux's Harmonic equation (\ref{eq-harmonic_equation}).
\end{theorem}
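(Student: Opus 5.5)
The plan mirrors the proof of Theorem \ref{thm-F2_laplace}: read off the hyperbolic equation from the conjugate $F_4$ system above and identify it with (\ref{eq-harmonic_equation}) after a change of parameters. In the conjugate coordinates $(s,t)$ of Lemma \ref{lem-F4_conjugate}, and after the stated gauge transformation $z \mapsto (s+t)^{\alpha+\beta-\frac12}(s-t)^{-\gamma_2+\frac12}z$, the first equation of (\ref{eq-rank-4_system_conjugate}) has $a=b=0$. It therefore reads
\[
z_{st} + \left(\frac{(\gamma_2-\frac12)(\gamma_2-\frac32)}{(s-t)^2} - \frac{(\alpha-\beta+\frac12)(\alpha-\beta-\frac12)}{(s+t)^2}\right) z = 0 \, .
\]
Comparing with (\ref{eq-harmonic_equation}) in the variables $(x,y)=(s,t)$, I would set $\alpha_H = \gamma_2-\frac12$. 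Then $\alpha_H(\alpha_H-1)=(\gamma_2-\frac12)(\gamma_2-\frac32)$. Similarly I would set $\beta_H = \alpha-\beta+\frac12$, so that $\beta_H(\beta_H-1)=(\alpha-\beta+\frac12)(\alpha-\beta-\frac12)$. Here $\alpha_H,\beta_H$ denote the constants of Example \ref{ex-harmonic_equation}. Hence the hyperbolic equation of $S_{F_4}$ is literally Darboux's Harmonic equation with these parameters.

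Next I would record the first Laplace invariants. Since $a=b=0$, (\ref{eq-laplace_invariants}) gives $\mathsf{h}=\mathsf{k}=-c$, which matches (\ref{eq-harmonic_laplace_invariants}) under the substitution above. Before concluding I need to check gauge and coordinate independence. The coordinate change $T_4$ is a fixed choice of conjugate coordinates, and any other conjugate coordinates differ by $(s,t)\mapsto(u(s),v(t))$ or the swap. The gauge factor is a transformation $z\mapsto fz$. By (\ref{eq-transform1_components}) and (\ref{eq-laplace_2-forms}), neither operation changes the Laplace invariants as symmetric $2$-forms. So the identification is intrinsic to $S_{F_4}$ and does not depend on the normalization used.

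For the whole sequence, I would argue that the higher invariants are generated from $(\mathsf{h}_0,\mathsf{k}_0)$ by the recursion (\ref{eq-higher_laplace}) and its inverse. The inverse recursion is $\mathsf{k}_{n-1}=2\mathsf{k}_n-\mathsf{h}_n-(\log\mathsf{k}_n)_{xy}$, $\mathsf{h}_{n-1}=\mathsf{k}_n$. Since the two equations share the same $(\mathsf{h}_0,\mathsf{k}_0)$, the sequences $\{\mathsf{h}_n,\mathsf{k}_n\}_{n\in\mathbb{Z}}$ coincide term by term. This mirrors Corollary \ref{cor-F2_laplace_invariants}, except that no closed form should be expected, since $(\log c)_{st}$ does not simplify for a general two-term $c$.

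The main obstacle is confirming the coefficient $c$ of the conjugate $F_4$ system, and in particular that the gauge really kills $a$ and $b$. I would check this by pushing (\ref{eq-F4_hypergeometric_eqns}) through $T_4$. The $z_{st}$ equation arises as the combination of the two pulled-back equations in which the $z_{ss}$ and $z_{tt}$ terms cancel (possible because $T_4^*h_4$ is diagonal by Lemma \ref{lem-F4_conjugate}). I would then choose the gauge $f$ with $-d\log f = b\,ds + a\,dt$. This is allowed because $\mathsf{h}=\mathsf{k}$ forces $a_s=b_t$, exactly as discussed after Definition \ref{def-laplace_invariants}. Solving for $f$ should reproduce the stated factor, and what remains is $c$.
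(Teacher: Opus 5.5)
Your proposal is correct and is the paper's argument. With $a=b=0$, the hyperbolic equation of the conjugate $F_4$ system is literally (\ref{eq-harmonic_equation}) under $\alpha\to\gamma_2-\frac12$ and $\beta\to\alpha-\beta+\frac12$, so $\mathsf{h}=\mathsf{k}=-c$ and the whole sequence agrees.

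One of your side claims is false, however. Conjugate coordinates are not unique up to $(s,t)\mapsto(u(s),v(t))$ and the swap. Any non-asymptotic direction field, together with its $h$-conjugate field, gives a conjugate net, so conjugate nets depend on an arbitrary function. The Laplace sequence is therefore an invariant of the particular net from Lemma \ref{lem-F4_conjugate}, not of $S_{F_4}$ itself. The theorem only needs that net, so simply drop the ``intrinsic to $S_{F_4}$'' claim.
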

\begin{proof}
	This is exactly analogous to Theorem \ref{thm-F2_laplace}. The relation between the parameters in (\ref{eq-harmonic_equation}) is $\alpha \to \gamma_2 - \frac{1}{2}$ and $\beta \to \alpha - \beta + \frac{1}{2}$.
\end{proof}

This result for $F_4$ is in perfect agreement with Iwasaki \cite{MR946650}, who showed that Darboux's Harmonic equation admits $F_4$ as a Riemann function. It would be interesting to determine if there is also an interpretation in terms of an integral transform. 

\begin{theorem}
	\label{thm-F4_W_congruence}
	For all values of the parameters $\alpha,\beta,\gamma_1,\gamma_2$, both the positive and negative congruences $\mathbf{L}^\pm = \{S_{F_4},S_{F_4}^\pm\}$ are $W$-congruences. 
\end{theorem}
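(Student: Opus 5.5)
The plan is a direct substitution of the conjugate system for $F_4$ into the Weingarten criteria of Corollaries \ref{cor-positive_W} and \ref{cor-negative_W}, exactly as was done for $F_2$ in Theorem \ref{thm-F2_W_congruence}. Throughout, the conjugate coordinates $(s,t)$ of Lemma \ref{lem-F4_conjugate} play the roles of $(x,y)$ in (\ref{eq-rank-4_system_conjugate}). We use the coefficients listed above, obtained after the gauge transformation $z\mapsto (s+t)^{\alpha+\beta-\frac12}(s-t)^{-\gamma_2+\frac12}z$.

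Since a projective gauge transformation acts isometrically on $h$, and the Laplace transforms of gauge-equivalent functions are gauge-equivalent, the conformal classes $[h]$ and $[h^\pm]$, and hence the vanishing of $W$, are unaffected by this normalization. We also assume, as in Theorems \ref{thm-positive_laplace_system} and \ref{thm-negative_laplace_system}, that $\mathsf{h}=\mathsf{k}=-c\neq 0$. By (\ref{eq-harmonic_laplace_invariants}) this holds for generic parameters, and otherwise $S_{F_4}^\pm$ degenerates to a curve.

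Write $K:=2(\alpha+\beta-\gamma_1-\gamma_2)+3$, so that
\[
q=\frac{4s^2-1}{4t^2-1},\qquad m=\frac{4sK}{4t^2-1},\qquad n=\frac{4tK}{4t^2-1},\qquad a=b=0 .
\]

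\textbf{Negative congruence.} We have $b_t=0$ trivially. Moreover $n$ depends only on $t$, so $n_s=0$. Hence $W^-=2b_t-n_s=0$ by (\ref{eq-negative_weingarten}), and $q_0=q$ in (\ref{eq-negative_q}).

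\textbf{Positive congruence.} Since $a=0$, the first term of (\ref{eq-positive_weingarten}) vanishes, and
\[
W^+=-q\,m_t+m\,q_t=-q^2\,\partial_t\!\left(\frac{m}{q}\right).
\]
The key observation is that
\[
\frac{m}{q}=\frac{4sK}{4s^2-1}
\]
is independent of $t$, because the common factor $(4t^2-1)^{-1}$ cancels. Therefore $W^+=0$ identically in $\alpha,\beta,\gamma_1,\gamma_2$. By Corollaries \ref{cor-positive_W} and \ref{cor-negative_W}, both $\mathbf{L}^\pm$ are then $W$-congruences.

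The main obstacle is not the final algebra, which is two lines, but trusting the coefficients $m$ and $n$ of the $F_4$ conjugate system. These come from pulling back (\ref{eq-F4_hypergeometric_eqns}) along $T_4$, applying the gauge factor, and renormalizing so that the $z_{st}$ equation has unit leading coefficient and $a=b=0$.

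I would re-derive just the $z_s$ and $z_t$ coefficients of the second equation. Specifically, I would check that after the gauge factor the first-order terms depend on $s$ and $t$ only through $s/(4t^2-1)$ and $t/(4t^2-1)$ respectively. The weaker property that actually matters is that $m/q$ is a function of $s$ alone and $n$ is a function of $t$ alone.

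As a consistency check, I would verify $m_0,n_0$ and $m_1,n_1$ from the Appendix formulas, or confirm the Maurer--Cartan equation for this system as in Proposition \ref{prop-F2_F4_integrable_systems}, since an error there would invalidate the computation.
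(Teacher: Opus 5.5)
Your proposal is correct and is essentially the paper's own argument: the theorem follows by direct substitution of the gauged $F_4$ conjugate coefficients into $W^-=2b_t-n_s$ and $W^+=2q^2a_s-qm_t+mq_t$, and both vanish identically for the reasons you give ($b=0$ with $n$ depending on $t$ alone, and $a=0$ with $m/q=4sK/(4s^2-1)$ depending on $s$ alone). The coefficients you were worried about do check out: pulling back (\ref{eq-F4_hypergeometric_eqns}) along $T_4$ and applying the gauge factor gives exactly $a=b=0$, $m=4sK/(4t^2-1)$ and $n=4tK/(4t^2-1)$, and your gauge-invariance remark is sound, since $z=f\widehat{z}$ implies $\widehat{z}^{\,\pm}=z^{\pm}/f$.
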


As mentioned in Remark \ref{rmk-geometry_of_laplace}, it is of interest to determine when for example a line congruence can be generated by two focal quadrics. In the case of $F_2$ and $F_4$, we have the following negative result:

\begin{corollary}
	\label{cor-focal_quadrics}
	Let $S = S_{F_2}$ or $S_{F_4}$. The only values of the respective parameters for which the positive congruence $\mathbf{L}^\pm = \{S,S^\pm\}$ is generated by focal quadrics correspond to parameter values for which $h=0$ in the case of $\mathbf{L}^+$ and $\mathsf{k}=0$ in the case of $\mathbf{L}^-$.
\end{corollary}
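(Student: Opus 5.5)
The plan is to check, for each of $S_{F_2}$ and $S_{F_4}$ and each sign, when both focal surfaces of $\mathbf{L}^\pm$ can be quadrics. By Corollary \ref{cor-quadric_conjugate}, a surface given in conjugate coordinates by (\ref{eq-rank-4_system_conjugate}) is a quadric if and only if $A=0=B$, where
\[
A=q_x+4bq-2m, \qquad B=-q_y+(4a-2n)q .
\]

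The first step is to test whether $S$ itself can be a quadric, using the explicit conjugate coefficients $a,b,c,q,m,n,r$ listed above for $F_2$ (in the coordinates $(s,t)$ of Lemma \ref{lem-F2_conjugate}) and for $F_4$ (Lemma \ref{lem-F4_conjugate}).

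For $F_4$ we have $a=b=0$ and $q=(4s^2-1)/(4t^2-1)$. Then $A=q_s-2m$. Here $q_s=8s/(4t^2-1)$ and $2m=8s\bigl(2(\alpha+\beta-\gamma_1-\gamma_2)+3\bigr)/(4t^2-1)$. So $A=0$ exactly when $\alpha+\beta-\gamma_1-\gamma_2=-1$. The same linear condition makes $B=0$, as a direct check confirms.

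For $F_2$, $A$ and $B$ are rational functions in $s,t$. Clearing denominators and setting every coefficient of the resulting polynomials to zero gives a linear subvariety of parameter space. These are the ``quadric'' subvarieties mentioned in the introduction.

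The second step treats the transformed surface. If $\mathsf{h}\neq 0$ (respectively $\mathsf{k}\neq0$), Theorems \ref{thm-positive_laplace_system} and \ref{thm-negative_laplace_system} give the system of $S^\pm$. I would then form
\[
A_1=q_{1,x}+4b_1q_1-2m_1, \qquad B_1=-q_{1,y}+(4a_1-2n_1)q_1,
\]
and the analogous $A_0,B_0$, using the appendix formulas, or equivalently the focal-quadric constraints listed there.

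The $W$-congruence results, Theorems \ref{thm-F2_W_congruence} and \ref{thm-F4_W_congruence}, simplify this considerably. They give $q_1=q$ and $q_0=q$, so $A_1$ and $B_1$ differ from $A$ and $B$ only through $a_1,b_1,m_1,n_1$. Here $a_1=a-(\log\mathsf{h})_y$ and $b_1=b$. The Laplace invariants are the explicit EPD and harmonic-equation expressions from Theorems \ref{thm-F2_laplace} and \ref{thm-F4_laplace_invariants}. With these, $A_1$ and $B_1$ restricted to the locus $A=B=0$ become rational functions of $(s,t)$ with polynomial dependence on the parameters.

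The third step is to show that, on the locus where $S$ is a quadric, the simultaneous vanishing of $A_1,B_1$ (or $A_0,B_0$) is impossible whenever $\mathsf{h}\neq0$ (respectively $\mathsf{k}\neq0$). For $F_2$, I expect $A_1$ to contain a term proportional to $\mathsf{h}$, or to $(\log\mathsf{h})_y=2/(s-t)$ times a parameter factor, that cannot cancel. The remaining possibilities then all force
\[
\beta_2(\beta_2-\gamma_2+1)=0 \quad\text{or}\quad (\beta_2-1)(\beta_2-\gamma_2)=0,
\]
which are exactly $\mathsf{h}=0$ and $\mathsf{k}=0$ respectively. For $F_4$ the analogous factors are $(\gamma_2-\tfrac12)(\gamma_2-\tfrac32)$ and $(\alpha-\beta\pm\tfrac12)$. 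In the degenerate cases $S^\pm$ collapses to a curve, so the statement should be read as: quadric focal pairs occur only in that degenerate limit.

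The main obstacle is computing $m_1,n_1$ (and $m_0,n_0$) from the appendix formulas and then factoring the resulting coefficient polynomials. I would do this with computer algebra: collect coefficients of monomials in $s,t$ and compute a primary decomposition of the resulting parameter ideal. The goal is to verify that every component lies inside $\{\mathsf{h}=0\}$ (respectively $\{\mathsf{k}=0\}$).
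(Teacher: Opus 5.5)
Your proposal follows the same route as the paper: find the quadric locus of $S$ from $A=B=0$ (which gives $\gamma_2=\alpha+\beta-\gamma_1+1$ for $F_4$, as you found, and the linear locus $\alpha=\beta_1+\beta_2-\tfrac12$, $\gamma_1=2\beta_1$, $\gamma_2=2\beta_2$ for $F_2$), then impose the quadric conditions on $S^\pm$ via the appendix formulas and check that they can only hold when $\mathsf{h}$ (resp.\ $\mathsf{k}$) vanishes. One small correction to your expectation: the $(\log\mathsf{h})_y$ term enters through $a_1$, hence through $B_1$ rather than $A_1$, and using $q_1=q_0=q$ one gets $B_1=B+2q\,(\log(q/\mathsf{h}^2))_y$ and $A_0=A-2q\,(\log(q\mathsf{k}^2))_x$, so on the quadric locus the obstruction is a one-line check for both $F_2$ and $F_4$ and no primary decomposition is needed.
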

\begin{proof}
	Evaluating the vanishing of the cubic form $\Phi$ from Theorem \ref{thm-conjugate_invariants} applied to the conjugate form of $F_2$ and $F_4$ above reveals that the surfaces $S_{F_2}$ and $S_{F_4}$ are quadrics precisely when for $F_2 : \{\alpha=\beta_1+\beta_2-\tfrac{1}{2}, \gamma_1=2 \beta_1, \gamma_2=2 \beta_2\}$ and for $F_4 : \left\{\gamma_2=\alpha+\beta-\gamma_1+1\right\}.$ This matches Sasaki \& Yoshida, \cite[\S 5.5]{MR960834}. Utilizing the relevant formulae (\ref{eq-pos_quad_quad}, \ref{eq-neg_quad_quad}) in Appendix \ref{appendix} that determine when $S,S^\pm$ are simultaneously quadrics yields the rest of the statement.
\end{proof}
\bibliographystyle{abbrv}
\bibliography{ryan_schultz}
\newpage
\appendix
\section{Laplace transform components}
\label{appendix}
Here we present the components of the positive and negative Laplace transformed systems as in Theorems \ref{thm-positive_laplace_system} and \ref{thm-negative_laplace_system}. Then we present the differential constraints utilized in Corollary \ref{cor-focal_quadrics} for when both focal surfaces of the line congruences $\mathbf{L}^\pm = \{S,S^\pm\}$ are quadric surfaces. 

\begin{flushleft}
	\textbf{Theorem \ref{thm-positive_laplace_system}}: The components $m_1,n_1,r_1$ of the positive Laplace transformed system (\ref{eq-positive_rank-4_system}) are given as
	\begin{equation}
		\label{eq-pos_components1}
	\scalemath{.75}{\begin{cases}
			m_1 =-\frac{\left(\left(2a_x b+a_{x x}-\mathsf{h}_x\right)+\mathsf{h}_x\left(-2 a_x+\mathsf{h}\right)\right) q}{\mathsf{h}^2} +\frac{(\left(m b+2 a_y-n_y\right) a+bm_y-m c-a_yn-a_{y y}+r_y)\mathsf{h}-m_y\mathsf{h}_x}{\mathsf{h}^2}  -\frac{\left(\mathsf{h}\left(a^2-a n+m b+r-a_y\right)-m\mathsf{h}_x\right)q_y}{\mathsf{h}^2 q} \; , \\
			n_1 = n - (\log q)_y \; , \\
			r_1  = \frac{r_{11}q}{\mathsf{h}^2} + \frac{r_{12}}{\mathsf{h}^2} + \frac{r_{13}q_y}{q\mathsf{h}^2} \; ,
		\end{cases}}
	\end{equation}
	where $r_{11},r_{12},r_{13}$ are defined as 
	\begin{equation}
		\label{eq-pos_components2}
	\scalemath{.75}{\begin{cases}
			r_{11} = \left(-b^2+b_x\right) \mathsf{h}^2+\left(a b^3-2 ab_x b-c b^2+a_xb_x+cb_x+bc_x\right) \mathsf{h} +b\mathsf{h}_x\left(b a-a_x-c\right) \, ,\\
			r_{12} = \left(-m b-2 a_y+n_y+r\right)\mathsf{h}^2 +\left(b^2 m a+\left(\left(2 a_y-n_y\right) a-m c-a_y n+r_y-a_{y y}\right) b+m_yb_x\right) \mathsf{h} -bm_y\mathsf{h}_x \; , \\
			r_{13} = (n-a) \mathsf{h}^2+\left(\left(a^2-a n-a_y+r\right) b+b_x m\right) \mathsf{h}-b m\mathsf{h}_x \, .
		\end{cases}}
	\end{equation}
\end{flushleft}
\begin{flushleft}
	\textbf{Theorem \ref{thm-negative_laplace_system}}: The components $m_0,n_0,r_0$ of the negative Laplace transformed system (\ref{eq-negative_rank-4_system}) are given as
	\begin{equation}
		\label{eq-neg_components1}
		\scalemath{.8}{m_0 = \frac{\mathsf{k}\left(m+q_x\right)}{n_x-2 b_y+\mathsf{k}} \, , \quad
			n_0 = \frac{n_{01}}{\mathsf{k}(n_x-2 b_y+\mathsf{k})} \; , \quad
			r_0 = \frac{r_{01}}{\mathsf{k}(n_x-2 b_y+\mathsf{k})} \; ,}
	\end{equation}
	where $n_{01}$ and $r_{01}$ are defined by 
	\begin{equation}
		\label{eq-neg_components2}
	\scalemath{.8}{\begin{cases}
			n_{01} = \left(b^2 q_x+\left(2 b_x q-a^2+a n-m_x+a_y\right) b-b_x q_x-b_x m+a c-n c-b_{xx} q-c_y+r_x\right)\mathsf{k} \\
			\;\;\;\;\;\;+(n_x-2 b_y+\mathsf{k})\left(b a^2+\left(b_y-c\right) a-\mathsf{k}_y\right) \; , \\
			r_{01} = \left(-2 b_x q+a^2-a n-b q_x+r+m_x-a_y\right) \mathsf{k}^2 \\
			\;\;\;\;\;\;+\left(-2 a^3 b+\left(b n+b_y+2 c-n_x\right) a^2 +\left(b^2 q_x+\left(2 b_x q-m_x+2 a_y\right) b-b_x q_x-b_x m \right. \right.\\
			\;\;\;\;\;\;\left.\left.-n c-b_{x x} q-c_y+r_x \right)a -a_y\left(b_y+c-n_x\right)\right)\mathsf{k} \\
			\;\;\;\;\;\;+(n_x-2 b_y+\mathsf{k})\left(b a^2+\left(b_y-c\right) a-\mathsf{k}_y\right) \; .
		\end{cases}}
	\end{equation}	
\end{flushleft}
\begin{flushleft}
	\textbf{Remark \ref{rmk-geometry_of_laplace}}: Let $\Phi^+$ be the cubic form of the positive Laplace transformed surface $z^+$ with components $A^+$ and $B^+$ defined in the obvious way from Theorem \ref{thm-conjugate_invariants} and the transformed system (\ref{eq-positive_rank-4_system}). Then we have that both surfaces $S,S^+$ are both quadrics if and only if $A = 0 = B$ and $A^+ = 0 = B^+$, which happens if and only if 
	\begin{equation}
		\label{eq-pos_quad_quad}
	\tiny	\begin{aligned}
			&4 q_y^2 a \mathsf{h}+q_y q_x^2 \mathsf{h}-q_y q_x \mathsf{h}_x q-2 q_y q_x b q \mathsf{h}-q_y q \mathsf{h} q_{x x}+\left(-4 a^2-6 a_y+4 r\right) q_y q \mathsf{h}-q_x q_{x y} q \mathsf{h} \\
			& +\left(4 b_y-2 a_x\right) q_x q^2 \mathsf{h}+\mathsf{h}_x q_{x y} q^2+\left(2 b a+4 b_y-2 a_x-2 c\right) \mathsf{h}_x q^3+2 q_{x y} b q^2 \mathsf{h}+\left(8 b_y b-2 b_x a\right. \\
			& \left.-2 b a_x-2 a_{x x}+2 c_x+4 b_{x y}\right) q^3 \mathsf{h}+\left(8 a_y a+4 a_{y y}-4 r_y+q_{x x y}\right) q^2 \mathsf{h}-2 q_{y y} a q \mathsf{h} = 0 \, ,
		\end{aligned}
	\end{equation}
	\begin{equation*}
	\tiny	\begin{aligned}
			& 3 \mathsf{h}_y q_y q_x q-3 \mathsf{h}_y q_{x y} q^2+\left(-6 b a-12 b_y+6 a_x+6 c\right) \mathsf{h}_y q^3-4 q_y^2 q_x \mathsf{h}+4 q_y q_{x y} q \mathsf{h}+\left(4 b a+8 b_y\right. \\
			& \left.-4 a_x-4 c\right) q_y q^2 \mathsf{h}+q_{y y} q_x q \mathsf{h}+\left(-2 b_y a-2 b a_y-4 b_{y y}+2 a_{x y}+2 c_y\right) q^3 \mathsf{h}-q_{x y y} q^2 \mathsf{h} = 0 \, .
		\end{aligned}
	\end{equation*}
	
	These formulae are obtained by solving the equations $A = 0 = B$ for the coefficient functions $m$ and $n$ respectively, and then evaluating the formulae for $A^+,B^+$ and setting them to zero. Similarly, let $\Phi^-$ be the cubic form of the negative Laplace transformed surface $z^-$ with components $A^-$ and $B^-$. Then we have that both surfaces $S,S^+$ are both quadrics if and only if $A = 0 = B$ and $A^- = 0 = B^-$, which happens if and only if 
	\begin{equation}
		\label{eq-neg_quad_quad}
	\tiny	\begin{aligned}
			&\left(\left(-2 b_x a-2 b a_x-4 a_{x x}+2 c_x+2 b_{x y}\right) \mathsf{k}+\left(-6 b a+6 b_y-12 a_x+6 c\right) \mathsf{k}_x\right) q^2 \\
			& +\left(\left(\left(-4 b a+4b_y-8 a_x+4 c\right) q_x+q_{x x y}\right) \mathsf{k}+3 \mathsf{k}_x q_{x y}\right) q-q_y q_{x x} \mathsf{k}-3 \mathsf{k}_x q_y q_x = 0 \, ,
		\end{aligned}
	\end{equation}
	\begin{equation*}
	\tiny	\begin{aligned}
			& \left(8 b b_x+4 b_{x x}\right) \mathsf{k} q^4+\left(\left(\left(4 b^2+6 b_x\right) q_x+2 b q_{x x}+4 b a^2-6 b_y a-2 b a_y+16 a a_x-4 a c\right.\right. \\
			& \left.\left.-2 b_{y y}+4 a_{x y}+2 c_y-4 r_x\right) \mathsf{k}+\left(2 b a-2 b_y+4 a_x-2 c\right) \mathsf{k}_y+4 aq^3\mathsf{k}\left(-b a+b_y-2 a_x+c\right)\right)  \\
			& +\left(\left(\left(2 b_y-4 a_x\right) q_y-4 q_{x y} a-q_{y x y}\right) \mathsf{k}-q_{x y} \mathsf{k}_y+2 a\mathsf{k} q_{x y}\right) q^2+\left(\left(\left(4 q_y a+q_{y y}\right) q_x\right.\right. \\
			& \left.\left.+3 q_{x y} q_y\right) \mathsf{k}+\left(\mathsf{k}_y-2 a\mathsf{k}\right) q_y q_x\right) q-3 q_y^2 q_x \mathsf{k} = 0 \, .
		\end{aligned}
	\end{equation*}
\end{flushleft}
\end{document}